\newcommand{\C}{\mathbb C}
\newcommand{\R}{\mathbb R}
\newcommand{\N}{\mathbb N}
\def\({\left(}
\def\){\right)}
\def\<{\left\langle}
\def\>{\right\rangle}
\def\O{\mathcal O}
\def\G{\mathcal G}
\def\d{{\partial}}
\def\eps{\varepsilon}
\def\le{\leqslant}% lessoreqal
\def\ge{\geqslant}%greaterorequal
\DeclareMathOperator{\supp}{supp}
\DeclareMathOperator{\RE}{Re}
\DeclareMathOperator{\IM}{Im}
\DeclareMathOperator{\loc}{loc}
\DeclareMathOperator{\dist}{dist}
\DeclareMathOperator{\rad}{rad}
\def\Tend#1#2{\mathop{\longrightarrow}\limits_{#1\rightarrow#2}}
\theoremstyle{plain}
\newtheorem{theorem}{Theorem} [section]
\newtheorem{lemma}[theorem]{Lemma}
\newtheorem{proposition}[theorem]{Proposition}
\theoremstyle{remark}
\newtheorem{remark}[theorem]{Remark}
\theoremstyle{definition}
\newtheorem{observation}[theorem]{Observation}
\def\Tend#1#2{\mathop{\longrightarrow}\limits_{#1\rightarrow#2}}
\numberwithin{equation}{section}
\title[Rotational Binary BEC]
{On stability of rotational 2D binary Bose-Einstein condensates}
\author[R. Carles]{R\'emi Carles}
\address[R. Carles]{CNRS, IRMAR - UMR 6625, F-35000 RENNES, FRANCE}
\email{Remi.Carles@math.cnrs.fr}
\author[V. D. Dinh]{Van Duong Dinh}
\address[V. D. Dinh]{Laboratoire Paul Painlev\'e UMR 8524, Universit\'e de Lille CNRS, 59655 Villeneuve d'Asc, France
and
Department of Mathematics, HCMC University of Education, 280 An Duong Vuong, Ho Chi Minh, Vietnam}
\email{contact@duongdinh.com}
\author[H. Hajaiej]{Hichem Hajaiej}
\address[H. Hajaiej]{Department of Mathematics, California State University, Los Angeles, CA 90032}
\email{hhajaie@calstatela.edu}
\subjclass[2010]{35Q55, 35A01}
\keywords{Nonlinear Schr\"odinger equation; Bose-Einstein condensate; Harmonic potential; Rotation; Standing waves; Global existence; Stability}
\thanks{RC is supported by Rennes M\'etropole through its AIS
  program. VDD is supported by the Labex CEMPI (ANR-11-LABX-0007-01).}
\begin{document}

\begin{abstract}
  We consider a two-dimensional nonlinear Schr\"odinger equation
  proposed in Physics to model rotational binary Bose-Einstein condensates. The
  nonlinearity is a logarithmic modification of the usual cubic
  nonlinearity. The presence of both the external confining
  potential and rotating frame makes it difficult to apply standard
  techniques to directly construct ground states, as we explain in an
  appendix. The goal of the
  present paper is to analyze the orbital stability of the set of
  energy minimizers under mass constraint, according to the relative
  strength of the confining potential compared to the angular
  frequency. The main novelty concerns the critical case (lowest Landau Level) where these
  two effects compensate exactly, and orbital stability is
  established by using techniques related to magnetic Schr\"odinger
  operators.
\end{abstract}

\maketitle

\section{Introduction}\label{sec:intro}

The formation of self-bound droplets is a well-known macroscopic phenomenon. Recent experiments with ultracold quantum gases of bosonic atoms revealed a novel type of quantum liquid: dilute self-bound Bose-Einstein condensate (BEC) having orders of magnitude lower density than air  (see e.g. \cite{KSWWMFP, SWBFP, FKSWP, IMMHT} for Bose gases of dysprosium and \cite{SFMMWMMMIF, CTSNTCT} for binary Bose gases of potassium). Since these droplets form out of a BEC, there is good reason to assume that they have superfluid properties. One remarkable feature of a superfluid is its response to rotation, in particular the occurrence of quantized vortices (see \cite{Aftalion} for a broad introduction to these phenomena). In \cite{TSKR19}, binary BEC droplets carrying angular momentum were considered. Using weak first-order corrections to the mean field energy, often referred to as the Lee-Huang-Yang correction \cite{LeHuYa57}, a binary BEC droplet with angular momentum is well described by the wave function $\psi: \R_+ \times \R^2 \rightarrow \C$ whose evolution is governed by the Gross-Pitaevskii equation (GPE) with angular momentum
\begin{equation} \label{eq:nls}
i\d_t \psi +\frac{1}{2}\Delta \psi=V \psi+ |\psi|^2\ln (|\psi|^2) \psi -iK_3|\psi|^4\psi-\Omega L_z\psi,
\end{equation}
where the scaling invariances have been used to bring the equation into its dimensionless form. Here the external potential $V$ is of the form
\begin{equation} \label{eq:V-intro0}
V(x) = \frac{\gamma^2}{2} |x|^2 + V_0 e^{-\gamma|x|^2},
\end{equation}
where $\gamma > 0$ is the harmonic trap frequency and $V_0 \ge 0$ is the amplitude of the Gaussian. The parameter $K_3 \ge 0$ is the rate of three-body losses. The angular momentum operator $L_z$ is of the form
\begin{align} \label{eq:Lz-intro}
L_z = i (x_2 \partial_{x_1}-x_1 \partial_{x_2}), \quad x=(x_1,x_2) \in \R^2
\end{align}
and $\Omega> 0$ is the rotational speed.
The fact that
the constants in the harmonic trap and the Gaussian part of the
potential are equal stems from \cite{TSKR19}, but is not crucial in our
analysis, so we consider more generally
\begin{equation} \label{eq:V-intro}
V(x) = \frac{\gamma^2}{2} |x|^2 + V_0 e^{-\gamma_0|x|^2},\quad
  \gamma,\gamma_0>0.
\end{equation}
The main purpose of this paper is to study  the existence/nonexistence
and orbital stability of mass-constraint standing waves for
\eqref{eq:nls}.
We consider the Cauchy problem for \eqref{eq:nls} with initial data
$\psi_0 \in \Sigma$, where
\[
\Sigma:=\left\{f\in H^1(\R^2),\ x\mapsto |x|f(x)\in L^2(\R^2)\right\}
\]
is equipped with the norm
\[
\|f\|^2_\Sigma =\|f\|^2_{H^1} + \|xf\|^2_{L^2}.
\]
Due to the presence of the harmonic potential, this space is rather
natural (see e.g. \cite{Ca15}).
In the case $K_3=0$, there are three physical quantities which are formally conserved along the flow of \eqref{eq:nls}
\begin{align*}
M(\psi(t)) &= \|\psi(t)\|^2_{L^2} = M(\psi_0), \tag{Mass} \\
L(\psi(t)) &= \int_{\R^2} \bar{\psi}(t,x) L_z \psi(t,x) dx = L(\psi_0), \tag{Angular momentum} \\
E_\Omega(\psi(t)) &= \frac{1}{2}\|\nabla \psi(t)\|_{L^2}^2 + \int_{\R^2}V(x)
|\psi(t,x)|^2dx \tag{Energy}\\
&\mathrel{\phantom{=}}+
\frac{1}{2}\int_{\R^2}|\psi(t,x)|^4\ln\left(\frac{|\psi(t,x)|^2}{\sqrt e}\right)dx
-\Omega L(\psi(t)) = E_\Omega(\psi_0).
\end{align*}
Here we note that the angular momentum is real-valued, but has no definite sign. Similarly, the term involving the natural logarithm also has no definite sign.

In the case $K_3\ne 0$, solutions to \eqref{eq:nls} formally satisfy
\begin{equation}\label{eq:mass}
\frac{1}{2}\frac{d}{dt}\|\psi(t)\|_{L^2}^2+K_3\|\psi(t)\|_{L^6}^6=0.
\end{equation}
This shows that for $K_3 \not = 0$, the equation is irreversible. This
is the reason why, since $K_3\ge 0$, we
consider only positive time in the present paper. In the case $K_3 = 0$, the equation is reversible, and $\overline{\psi}(-t,x)$ solves \eqref{eq:nls}: considering the case $t \ge 0$ suffices to describes
the dynamics for all time.

When $\gamma=V_0=K_3=\Omega=0$, the
equation \eqref{eq:nls} was recently studied in \cite{CaSp-p}. More
precisely, the global well-posedness for $H^1$ data, the existence and
uniqueness of positive ground state solutions for \eqref{eq:nls} were
shown there, along with  the orbital stability of prescribed mass
standing waves.

Our first result is the following global existence for \eqref{eq:nls}.

\begin{theorem}[Global well-posedness]\label{theo:GWP}
Let $\gamma,\gamma_0>0$, $\Omega > 0$, $V_0\ge 0$, $K_3\ge 0$, and $\psi_0 \in  \Sigma$. Then there exists a unique global-in-time solution to \eqref{eq:nls} satisfying $\psi\in C(\R_+; \Sigma)\cap L^3_{\loc}(\R_+;L^6(\R^2))$.
\begin{itemize}
\item If $K_3=0$, the conservation laws of mass, angular momentum, and energy hold.
\item If $K_3>0$, the solution asymptotically vanishes in the sense that
\begin{equation} \label{eq:decay}
  \|\psi(t)\|^2_{L^2} =\O\(t^{-1/4}\)\quad\text{as }t\to \infty.
\end{equation}
\end{itemize}
\end{theorem}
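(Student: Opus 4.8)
The plan has four parts. First I would gauge away the rotation: since the potential $V$ is radial, setting $u(t,x)=\psi\bigl(t,R(-\Omega t)x\bigr)$ with $R(\theta)$ the planar rotation of angle $\theta$ — equivalently $u(t)=e^{-i\Omega tL_z}\psi(t)$ — and using that $e^{-i\Omega tL_z}$ commutes with $\Delta$, with multiplication by $V$, with $L_z$, and with the pointwise maps $w\mapsto|w|^2\ln(|w|^2)w$ and $w\mapsto|w|^4w$, a direct computation shows that $u$ solves
\[
i\d_t u+\tfrac12\Delta u=Vu+|u|^2\ln(|u|^2)u-iK_3|u|^4u ,
\]
the term $-\Omega L_z\psi$ being exactly cancelled by the time-derivative of the rotation. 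Rotations are isometries of $\Sigma$ and of every $L^p(\R^2)$, so it suffices to prove the statement for this equation; moreover $M(\psi)=M(u)$, $L(\psi)=L(u)$ and $E_\Omega(\psi)=\EE(u)-\Omega L(u)$, with $\EE$ the energy of the equation for $u$, so the two formulations of the conclusion are equivalent.

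Next, for local existence, set $\mathcal H:=-\tfrac12\Delta+\tfrac12\gamma^2|x|^2$. By Mehler's formula, $e^{-it\mathcal H}$ obeys the same local-in-time Strichartz estimates as the free Schr\"odinger group (see \cite{Ca15}), for the admissible pairs $(q,r)$ in dimension two with $2/q+2/r=1$; the pair $(3,6)$ produces the space in the statement. I would solve the Duhamel formulation
\[
u(t)=e^{-it\mathcal H}u_0-i\int_0^t e^{-i(t-s)\mathcal H}\Bigl(V_0e^{-\gamma_0|x|^2}u+|u|^2\ln(|u|^2)u-iK_3|u|^4u\Bigr)(s)\,ds
\]
by a contraction on a ball of $C([0,T];\Sigma)\cap L^3([0,T];W^{1,6}(\R^2))$ (plus the auxiliary admissible pairs needed for the difference estimates), for $T$ small. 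The Gaussian bump is a bounded, smooth, fast-decaying potential — note $xe^{-\gamma_0|x|^2}$ and $\nabla e^{-\gamma_0|x|^2}$ are bounded — hence a harmless perturbation. The logarithmic modification of the cubic term is benign: $w\mapsto|w|^2\ln(|w|^2)w$ is $C^1$ on $\C$ and, with its differential, is bounded by $C_\eps\bigl(|w|^{2-\eps}+|w|^{2+\eps}\bigr)$, so it behaves like a power nonlinearity of exponent slightly above cubic, which is $H^1$-subcritical in two dimensions; the quintic $|u|^4u$ is $H^1$-subcritical as well (scaling-critical Sobolev exponent $1/2$) and is estimated against the $(3,6)$-Strichartz norm, gaining a positive power of $T$ by H\"older in time. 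The weight is propagated from the Heisenberg evolution of $x$ under $\mathcal H$ and the identity $x\bigl(G(|u|^2)u\bigr)=G(|u|^2)(xu)$. This yields a unique maximal solution, which is global unless $\|u(t)\|_\Sigma\to\infty$ in finite time.

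Then, global existence rests on a coercivity bound: for $u\in\Sigma$, using $V\ge\tfrac{\gamma^2}{2}|x|^2$ and Gagliardo--Nirenberg to absorb the part of $\tfrac12\int|u|^4\ln(|u|^2/\sqrt e)\,dx$ where $|u|\le1$, one gets $\|\nabla u\|_{L^2}^2+\|xu\|_{L^2}^2\le C\bigl(\EE(u)+1+\|u\|_{L^2}^N\bigr)$ for some $N$, with $C=C(\gamma)$ — this is precisely where the lack of coercivity in the regime $\Omega\ge\gamma$ becomes invisible after the reduction. If $K_3=0$, $\EE(u)$ and $\|u\|_{L^2}$ are conserved, so $\|u(t)\|_\Sigma$ is bounded and the solution is global; transporting back, $M$, $L$ (conserved for the $\Omega=0$ equation by rotational invariance) and $E_\Omega$ are conserved. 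If $K_3>0$, the mass identity $\tfrac12\tfrac{d}{dt}\|u\|_{L^2}^2=-K_3\|u\|_{L^6}^6$ gives $\|u\|_{L^6}^6\in L^1(\R_+)$ with $\int_0^\infty\|u\|_{L^6}^6\,dt\le\|u_0\|_{L^2}^2/(2K_3)$; on the other hand
\[
\tfrac{d}{dt}\EE(u)=-K_3\,\RE\<-\tfrac12\Delta u+Vu+|u|^2\ln(|u|^2)u,\;|u|^4u\>,
\]
and $\RE\<-\tfrac12\Delta u,|u|^4u\>\ge0$ (after integration by parts), $\RE\<Vu,|u|^4u\>=\int V|u|^6\,dx\ge0$, while $\RE\<|u|^2\ln(|u|^2)u,|u|^4u\>=\int|u|^8\ln(|u|^2)\,dx\ge-e^{-1}\|u\|_{L^6}^6$; hence $\tfrac{d}{dt}\EE(u)\le e^{-1}K_3\|u\|_{L^6}^6$, so $\EE(u(t))$ stays bounded for all time, whence $\|u(t)\|_\Sigma$ is uniformly bounded and the solution is global.

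Finally, for the decay when $K_3>0$: with $\|xu(t)\|_{L^2}\le C_0$ uniform in $t$, a weighted Gagliardo--Nirenberg inequality — obtained by splitting $\R^2=\{|x|\le R\}\cup\{|x|>R\}$, applying H\"older on the first set and $\int_{|x|>R}|u|^2\,dx\le R^{-2}\|xu\|_{L^2}^2$ on the second, and optimizing over $R$ — gives $\|u\|_{L^2}^{10}\le C\|xu\|_{L^2}^{4}\|u\|_{L^6}^{6}$, hence $\|u\|_{L^6}^6\ge c\,\|u\|_{L^2}^{10}$. Plugged into $\tfrac{d}{dt}\|u\|_{L^2}^2=-2K_3\|u\|_{L^6}^6$, this yields $m'\le-c\,m^5$ for $m(t):=\|u(t)\|_{L^2}^2$, so $m(t)^{-4}\ge c't$ and $\|u(t)\|_{L^2}^2=\O(t^{-1/4})$; as $\|\psi(t)\|_{L^2}=\|u(t)\|_{L^2}$, this is \eqref{eq:decay}. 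I expect the main obstacle to be the dissipative case: one needs a \emph{uniform-in-time} bound on $\|u(t)\|_\Sigma$, not merely a finite-time one, and the crucial point is that the only badly-signed contribution to $\tfrac{d}{dt}\EE(u)$ is controlled by $\|u\|_{L^6}^6$, which the mass-dissipation identity forces to be time-integrable; the sharp rate $t^{-1/4}$ then relies on the weighted Gagliardo--Nirenberg inequality, whose exponents are tuned precisely for that.
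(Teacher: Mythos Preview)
Your outline matches the paper's proof closely: remove the rotation by the change of variables \eqref{eq:chgt-fct}, use local-in-time Strichartz estimates for the (perturbed) harmonic-oscillator propagator to run a contraction, control $\|u(t)\|_\Sigma$ through the energy via the splitting of the logarithmic term, and deduce the $t^{-1/4}$ decay from the weighted inequality $\|u\|_{L^2}\lesssim\|u\|_{L^6}^{3/5}\|xu\|_{L^2}^{2/5}$ (the paper quotes this as \cite[Lemma~4.1]{AnCaSp15}; your splitting-over-$R$ argument reproduces it).

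The one place you diverge is the a~priori bound when $K_3>0$. The paper, following \cite{AnSp10,AnCaSp15}, introduces a pseudo-energy $E_0(\varphi)+k\|\varphi\|_{L^6}^6$ with $0<k<K_3/6$ and shows that, after adding the mass, it is nonincreasing. Your route is more direct: you bound $\tfrac{d}{dt}\EE(u)$ by a constant times $K_3\|u\|_{L^6}^6$ (the correct constant is $2$ rather than $1$ --- a harmless slip in your energy-derivative formula), and then integrate, using $\int_0^\infty\|u\|_{L^6}^6\,dt\le\|u_0\|_{L^2}^2/(2K_3)$ from the mass identity. This is in effect the paper's computation specialized to $k=0$, where it already works because the coefficient $K_3-6k$ in front of $\int|\varphi|^4|\nabla\varphi|^2$ is nonnegative; the extra $k\|\varphi\|_{L^6}^6$ term is not needed in this particular problem, so your shortcut is legitimate and slightly cleaner. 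What the pseudo-energy formulation buys is robustness in other models (e.g.\ different nonlinearities or dimensions, as in \cite{AnCaSp15}), but for the equation at hand your direct estimate suffices.
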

The Cauchy problem is addressed by resuming the approach from
\cite{AnMaSp12}. In passing, we fix a small flaw present in this paper
regarding dispersive estimates. The asymptotic extinction
\eqref{eq:decay} is then established like in \cite{AnCaSp15}, thanks
to a suitable uniform bound which makes it possible to control the
$L^6$-norm in \eqref{eq:mass} from below in terms of the $L^2$-norm.
\begin{remark}
The potential $V$ in \eqref{eq:V-intro} is radially symmetric, since
it is the model given in \cite{TSKR19}. We will see in the proof of
Theorem~\ref{theo:GWP} that the result still holds true in the more general
case of smooth potentials which are at most quadratic (and thus need
not be radial); see Remarks~\ref{rem:nonradial1} and \ref{rem:nonradial2}.
\end{remark}
\smallbreak

In the rest of the introduction, we are interested in the absence of three-body losses, i.e. $K_3=0$. In this case, \eqref{eq:nls} admits standing waves, i.e. solutions of the form
\begin{align} \label{eq:stan-wave}
\psi(t,x) = e^{i\omega t} \phi(x), \quad \omega \in \R,
\end{align}
where $\phi$ solves
\begin{equation}
 \label{eq:ground}
 -\frac{1}{2}\Delta \phi +V\phi+\phi|\phi|^2\ln (|\phi|^2) -\Omega L_z\phi+\omega \phi=0,\quad x\in \R^2.
\end{equation}
Note that in the case $K_3>0$, there is no such solution in view of the asymptotic extinction \eqref{eq:decay}.

The existence of standing waves for \eqref{eq:ground} can be achieved by several ways. The first way is to minimize the energy functional
\[
E_\Omega(f) = \frac{1}{2} \|\nabla f\|^2_{L^2} + \int_{\R^2} V |f|^2 dx + \frac{1}{2} |f|^4 \ln \left( \frac{|f|^2}{\sqrt{e}}\right) dx - \Omega L(f),
\]
with prescribed mass constraint, i.e. $\|f\|^2_{L^2} = \rho>0$, an
strategy which is often adopted in Physics. In this case, the parameter $\omega$ in \eqref{eq:ground} appears as a Lagrange multiplier associated to the minimization problem. Another way is to look for critical points of the action functional
\begin{align*}
S_\omega(f) &= E_\Omega(f) + \omega M(f) \\
&= \frac{1}{2} \|\nabla f\|^2_{L^2} + \omega \|f\|^2_{L^2} +
   \int_{\R^2} V |f|^2 dx + \frac{1}{2} \int_{\R^2}
       |f|^4 \ln \left( \frac{|f|^2}{\sqrt{e}}\right)   dx - \Omega L(f),
\end{align*}
with $\omega$ being given and fixed. However, this approach seems
difficult to apply in the present context.  More precisely,
\eqref{eq:ground} has two features which make it difficult to
characterize the range of $\omega$'s allowed to find a non-trivial
solution to \eqref{eq:ground}. The presence of the external potential
$V$ and the rotation $L_z$ introduces an $x$-dependence which makes it
impossible to invoke the results from \cite{BeGaKa83} (the 2D
counterpart of \cite{BeLi83b}), or even adapt easily the proof. On the
other hand, the fact that the nonlinearity is not homogeneous in
$\phi$ makes it impossible to reproduce the arguments from
\cite{RoWe88} (see also \cite{Fu01} in the case of a harmonic
potential). In an appendix, we collect some information regarding the
possible range for $\omega$ in the radial case, where the rotating
term is absent from \eqref{eq:ground}, and explain in more details why
minimizing the action seems difficult here.
\smallbreak

We therefore consider the following minimization problem: for $\rho>0$,
\begin{align} \label{I-Omega-rho}
I_\Omega(\rho) := \inf \left\{ E_\Omega(f) \ : \ f \in \Sigma, \|f\|^2_{L^2} = \rho\right\}.
\end{align}
Our next result concerns the existence and stability of prescribed mass standing waves for \eqref{eq:ground} in the case of low rotational speed.
\begin{theorem} \label{theo:orbi-low}
	Let $K_3=0$, $\gamma,\gamma_0>0, V_0\ge 0$ and $0<\Omega<\gamma$. Then for any $\rho>0$, there exists $\phi \in \Sigma$ such that $E_\Omega(\phi) = I_\Omega(\rho)$ and $\|\phi\|^2_{L^2} =\rho$. Moreover, the set
	\[
	\G_\Omega(\rho):= \left\{ \phi \in \Sigma \ : \ E_\Omega(\phi) = I_\Omega(\rho), \|\phi\|^2_{L^2} =\rho\right\}
	\]
	is orbitally stable under the flow of \eqref{eq:nls} in the sense that for any $\epsilon>0$, there exists $\delta>0$ such that for any initial data $u_0 \in \Sigma$ satisfying
	\[
	\inf_{\phi \in \G_\Omega(\rho)} \|u_0 - \phi\|_{\Sigma} <\delta,
	\]
	the corresponding solution to \eqref{eq:nls} exists globally in time and satisfies
	\[
\sup_{t\in \R}	\inf_{\phi \in \G_\Omega(\rho)} \|u(t)-\phi\|_{\Sigma}
<\epsilon .
	\]
\end{theorem}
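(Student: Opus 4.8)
The plan is to use the standard concentration-compactness / compactness approach to variational problems with mass constraint, exploiting the confining potential to recover compactness, and then invoke the classical Cazenave--Lions argument to deduce orbital stability from the compactness of minimizing sequences. The crucial structural input is the coercivity of $E_\Omega$ on the constraint sphere when $\Omega<\gamma$, which comes from the elementary inequality
\begin{equation*}
|L(f)| \le \|\nabla f\|_{L^2}\|xf\|_{L^2} \le \frac{1}{2\gamma}\|\nabla f\|_{L^2}^2 + \frac{\gamma}{2}\|xf\|_{L^2}^2,
\end{equation*}
so that for $\Omega<\gamma$ one controls $-\Omega L(f)$ by a fraction of $\tfrac12\|\nabla f\|_{L^2}^2 + \int V|f|^2$ (using that $V\ge \tfrac{\gamma^2}{2}|x|^2$), leaving a positive remainder. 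The logarithmic quartic term $\tfrac12\int |f|^4\ln(|f|^2/\sqrt e)$ must then be handled: its negative part is controlled since $t^2\ln t$ is bounded below, and more precisely one can absorb $\int_{\{|f|<1\}}|f|^4|\ln|f|^2|$ into the mass and a small fraction of $\|\nabla f\|_{L^2}^2$ via Gagliardo--Nirenberg together with the elementary bound $|f|^4|\ln|f|^2| \lesssim_\varepsilon |f|^{4-\varepsilon}$ on $\{|f|\le 1\}$, while on $\{|f|\ge 1\}$ the term is nonnegative. This yields $E_\Omega(f) \ge c\|f\|_\Sigma^2 - C(\rho)$ on $\{\|f\|_{L^2}^2=\rho\}$, hence $I_\Omega(\rho)>-\infty$ and every minimizing sequence is bounded in $\Sigma$.

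Next I would take a minimizing sequence $(f_n)$ for $I_\Omega(\rho)$. By the above it is bounded in $\Sigma$, and since the embedding $\Sigma \hookrightarrow L^p(\R^2)$ is compact for $2\le p<\infty$ (this is the gain from the harmonic confinement, a classical fact; see e.g. \cite{Ca15}), we may extract $f_n \rightharpoonup \phi$ weakly in $\Sigma$ with $f_n \to \phi$ strongly in $L^p$ for all such $p$; in particular $\|\phi\|_{L^2}^2=\rho$, so $\phi$ is admissible. One then checks lower semicontinuity of each term of $E_\Omega$: the kinetic term $\|\nabla f\|_{L^2}^2$ and the potential term $\int V|f|^2$ are weakly lower semicontinuous (the latter by Fatou, since $V\ge 0$), the angular momentum $L(f_n)\to L(\phi)$ converges because $L_z f_n = i(x_2\partial_1 - x_1\partial_2)f_n$ pairs an $L^2$-weakly convergent factor against an $L^2$-strongly convergent factor ($x f_n \to x\phi$ in $L^2$, while $\nabla f_n \rightharpoonup \nabla\phi$ in $L^2$, and one must argue carefully which factor is which — this is where a little care is needed), and the logarithmic quartic term passes to the limit by the $L^p$-strong convergence combined with a dominated-convergence / uniform-integrability argument using the growth bounds above. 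Hence $E_\Omega(\phi) \le \liminf E_\Omega(f_n) = I_\Omega(\rho)$, so $\phi$ is a minimizer and the minimizing sequence converges to it strongly in $\Sigma$ (strong convergence of the $\Sigma$-norm follows because all norm-defining quantities converge, or equivalently from weak convergence plus convergence of the norms $\|\nabla f_n\|_{L^2}$, $\|xf_n\|_{L^2}$, which is forced once all other terms in $E_\Omega$ converge and $E_\Omega(f_n)\to E_\Omega(\phi)$).

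Finally, orbital stability follows from the Cazenave--Lions argument. Suppose it failed; then there exist $\varepsilon_0>0$, a sequence of initial data $u_{0,n}$ with $\inf_{\phi\in\G_\Omega(\rho)}\|u_{0,n}-\phi\|_\Sigma \to 0$, and times $t_n$ with $\inf_{\phi\in\G_\Omega(\rho)}\|u_n(t_n)-\phi\|_\Sigma \ge \varepsilon_0$, where $u_n$ is the global solution (global existence is Theorem~\ref{theo:GWP}) with data $u_{0,n}$. By continuity of $E_\Omega$ and $M$ on $\Sigma$ we have $E_\Omega(u_{0,n})\to I_\Omega(\rho)$ and $M(u_{0,n})\to\rho$; by the conservation laws (Theorem~\ref{theo:GWP}, $K_3=0$ case) $E_\Omega(u_n(t_n))\to I_\Omega(\rho)$ and $M(u_n(t_n))\to\rho$. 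After rescaling $v_n := \sqrt{\rho/M(u_n(t_n))}\, u_n(t_n)$ to land exactly on the constraint (using continuity of $E_\Omega$ under this rescaling), $(v_n)$ is a minimizing sequence for $I_\Omega(\rho)$, hence by the compactness just established it has a subsequence converging in $\Sigma$ to some $\phi\in\G_\Omega(\rho)$, contradicting $\inf_{\phi\in\G_\Omega(\rho)}\|u_n(t_n)-\phi\|_\Sigma \ge \varepsilon_0$. I expect the main obstacle to be the coercivity estimate, i.e. simultaneously dominating the indefinite rotation term $-\Omega L(f)$ and the sign-indefinite logarithmic quartic term by the confining energy while keeping a positive share of $\|f\|_\Sigma^2$; once $I_\Omega(\rho)$ is finite and minimizing sequences are $\Sigma$-bounded, the compact embedding makes the rest essentially routine.
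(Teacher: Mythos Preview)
Your overall strategy matches the paper's: establish coercivity of $E_\Omega$ on the mass sphere via Young's inequality on the rotation term, use the compact embedding $\Sigma\hookrightarrow L^p$ to pass to the limit in a minimizing sequence, and conclude orbital stability by the Cazenave--Lions contradiction argument. The treatment of the logarithmic term and the stability step are fine.

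There is, however, a genuine gap in your lower-semicontinuity step. You claim that $L(f_n)\to L(\phi)$ because ``$xf_n\to x\phi$ in $L^2$'' strongly while $\nabla f_n\rightharpoonup\nabla\phi$ weakly. But weak convergence in $\Sigma$ together with the compact embedding into $L^p$ does \emph{not} yield strong convergence of $xf_n$ in $L^2$. (A translated-and-rescaled bump $f_n=n^{-1}g(\cdot-ne_1)$ gives $f_n\rightharpoonup 0$ in $\Sigma$, $f_n\to 0$ in every $L^p$, yet $\|xf_n\|_{L^2}$ stays bounded away from zero.) Since $L(f)$ has no sign, you cannot simply invoke Fatou either, and the remaining ``diagonal'' piece $L(f_n-\phi)$ is only bounded, not small.

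The paper sidesteps this by bundling the three quadratic pieces: setting
\[
B(f):=\|\nabla f\|_{L^2}^2+2\int V|f|^2\,dx-2\Omega L(f),
\]
your own coercivity estimate (which you proved for the boundedness of minimizing sequences) gives the two-sided equivalence $B(f)\simeq \|\nabla f\|_{L^2}^2+\|xf\|_{L^2}^2$ when $0<\Omega<\gamma$. Thus $B$ is a nonnegative continuous quadratic form on $\Sigma$, hence convex and weakly lower semicontinuous, and $B(\phi)\le\liminf B(f_n)$ follows immediately. Combining this with the strong $L^p$ convergence of the logarithmic term gives $E_\Omega(\phi)\le\liminf E_\Omega(f_n)$. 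You already have the needed equivalence; the fix is simply to use it for lower semicontinuity rather than trying to pass to the limit term by term.
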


The proof of Theorem \ref{theo:orbi-low} is based on a standard variational argument using the following observation (see Lemma \ref{lem:norm}): for $0<\Omega<\gamma$,
\[
\|\nabla f\|^2_{L^2} + 2\int_{\R^2} V|f|^2 dx -2\Omega L(f) \simeq \|\nabla f\|^2_{L^2} + \|xf\|^2_{L^2}
\]
which enables us to use the standard compact embedding $\Sigma \hookrightarrow L^r(\R^2)$ for all $2\le r<\infty$. We also make use of the log-type inequality
\begin{align} \label{eq:est-log}
\left| \int_{\R^2} |f|^4 \ln \left(\frac{|f|^2}{\sqrt{e}}\right) dx\right| \lesssim_\epsilon \|f\|^{4-\epsilon}_{L^{4-\epsilon}} + \|f\|^{4+\epsilon}_{L^{4+\epsilon}}
\end{align}
for any $\epsilon>0$. For more details, we refer to Subsection 3.1.

Next we consider the critical rotational speed $\Omega =\gamma$
(lowest Landau level, see e.g. \cite{Aftalion} and
references therein),
which
constitutes the main novelty of this paper. In this case, the energy functional can be rewritten as
\begin{align} \label{eq:E-Omega}
E_\gamma(f) = \frac{1}{2}\|\nabla_A f\|^2_{L^2} + V_0 \int_{\R^2} e^{-\gamma_0 |x|^2} |f(x)|^2 dx+ \frac{1}{2} \int_{\R^2} |f|^4 \ln\left(\frac{|f|^2}{\sqrt{e}}\right) dx,
\end{align}
where $\nabla_A := \nabla - iA$ with $A(x) = \gamma(-x_2, x_1)$. Thanks to this observation, it is convenient to consider the above functional on the magnetic Sobolev space
\begin{align} \label{eq:H1A}
H^1_A (\R^2):= \left\{ f \in L^2(\R^2) \ : \ (\partial_j - i A_j) f \in L^2(\R^2), j=1,2 \right\}
\end{align}
endowed with the norm
\[
\|f\|^2_{H^1_A} := \|f\|^2_{L^2} + \|\nabla_A f\|^2_{L^2}.
\]
Note that the energy functional \eqref{eq:E-Omega} is well-defined on $H^1_A(\R^2)$ by using \eqref{eq:est-log} and the magnetic Gagliardo-Nirenberg inequality (see e.g., \cite{EsLi89}): for $2<r<\infty$,
\[
\|f\|^{r}_{L^r} \le C_r \|\nabla_A f\|^{r-2}_{L^2} \|f\|^2_{L^2}, \quad \forall f \in H^1_A(\R^2).
\]
We also have $\Sigma \subset H^1_A(\R^2)$. Indeed, for $f \in \Sigma$, we have
\begin{align*}
\|f\|^2_{H^1_A} = \|f\|^2_{L^2} + \|\nabla f\|^2_{L^2} + \gamma^2 \|xf\|^2_{L^2} - \gamma \int_{\R^2} \bar{f} L_z f dx  \le C \|f\|^2_{\Sigma},
\end{align*}
where the last inequality follows from
\[
|L(f)| \le \|xf\|_{L^2} \|\nabla f\|_{L^2} \le
\frac{1}{2\gamma}\|\nabla f\|^2_{L^2} + \frac{\gamma}{2} \|xf\|^2_{L^2}.
\]

To show the existence and stability of standing waves to
\eqref{eq:nls} in the critical rotational case, it is convenient to consider separately two cases: $V_0 =0$ and $V_0>0$.

When $V_0 =0$, we denote
\begin{align} \label{eq:I-Omega}
I^0_\gamma(\rho):= \inf \left\{ E^0_\gamma(f) \ : \ f \in H^1_A(\R^2), \|f\|^2_{L^2} =\rho\right\},
\end{align}
where
\begin{align} \label{eq:E0}
E^0_\gamma(f):= \frac{1}{2}\|\nabla_A f\|^2_{L^2} + \frac{1}{2} \int_{\R^2} |f|^4 \ln\left(\frac{|f|^2}{\sqrt{e}}\right) dx,
\end{align}
and the superscript $0$ is here to emphasize the assumption $V_0=0$.
Let us first introduce the cubic ground state, that is, the unique positive,
radially symmetric solution to
\begin{equation}
  \label{eq:Q}
  -\frac{1}{2}\Delta Q + Q = Q^3,\quad x\in \R^2.
\end{equation}
By making use of a variant of the celebrated concentration-compactness principle of Lions adapted to the magnetic Sobolev space $H^1_A(\R^2)$ (see Lemma \ref{lem:conc-comp}), we prove the following result.

\begin{theorem} \label{theo:orbi-cri-1}
	Let $K_3=V_0=0$, $0<\gamma<\tfrac{1}{ 2 e^{3/2}}$ and $\Omega =\gamma$. Let $0<\rho \le \|Q\|_{L^2}^2$. Then there exists $\phi \in H^1_A(\R^2)$ such that $E^0_\gamma (\phi) = I^0_\gamma(\rho)$ and $\|\phi\|^2_{L^2} =\rho$. Moreover, the set of minimizers for $I^0_\gamma(\rho)$ is orbitally stable under the flow of \eqref{eq:nls}.
\end{theorem}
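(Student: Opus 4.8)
For Theorem~\ref{theo:orbi-cri-1}, the plan is to solve the constrained minimization problem defining $I^0_\gamma(\rho)$ by a concentration--compactness argument adapted to the magnetic Sobolev space $H^1_A(\R^2)$, and then to deduce orbital stability by the classical variational argument of Cazenave--Lions, relying on the global well-posedness of Theorem~\ref{theo:GWP} and on the conservation of mass and energy, which hold here since $K_3=0$ (recall that for $\Omega=\gamma$ and $V_0=0$ the energy of \eqref{eq:nls} is exactly the functional $E^0_\gamma$, by \eqref{eq:E-Omega}--\eqref{eq:E0}). Two structural facts are used repeatedly. First, since $(-i\nabla-A)^2$ is the Landau Hamiltonian associated with the constant magnetic field $2\gamma$, one has the sharp lower bound $\tfrac12\|\nabla_A f\|_{L^2}^2\ge\gamma\|f\|_{L^2}^2$, with equality exactly on the lowest Landau level $\{g(x_1+ix_2)\,e^{-\gamma|x|^2/2}\}\cap L^2$ ($g$ entire). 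Second, the diamagnetic inequality combined with the sharp cubic Gagliardo--Nirenberg inequality attached to $Q$ yields $\|f\|_{L^4}^4\le\|Q\|_{L^2}^{-2}\,\|\nabla_A f\|_{L^2}^2\,\|f\|_{L^2}^2$, and on the lowest Landau level one even has the stronger $\|f\|_{L^4}^4\le\tfrac{\gamma}{2\pi}\|f\|_{L^2}^4$, saturated by the Gaussian.

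To construct a minimizer I take a minimizing sequence $(f_n)$ with $\|f_n\|_{L^2}^2=\rho$ and proceed in three steps. \emph{(i) Boundedness.} Writing $\tfrac12\int|f|^4\ln(|f|^2/\sqrt e)\,dx=\tfrac12\int|f|^4\ln|f|^2\,dx-\tfrac14\|f\|_{L^4}^4$, the Gagliardo--Nirenberg bound and $\rho\le\|Q\|_{L^2}^2$ give $\tfrac14\|f\|_{L^4}^4\le\tfrac14\|\nabla_A f\|_{L^2}^2$; the positive part of $\int|f|^4\ln|f|^2\,dx$ is discarded, while its negative part is absorbed into $\tfrac18\|\nabla_A f\|_{L^2}^2+C(\rho)$ by an elementary pointwise estimate, the magnetic Gagliardo--Nirenberg inequality with an exponent $r\in(2,4)$, and Young's inequality. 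Hence $E^0_\gamma(f)\ge\tfrac18\|\nabla_A f\|_{L^2}^2-C(\rho)$, so $I^0_\gamma(\rho)>-\infty$ and $(f_n)$ is bounded in $H^1_A(\R^2)$. \emph{(ii) No vanishing.} If $(f_n)$ vanished, the magnetic vanishing lemma in Lemma~\ref{lem:conc-comp} would give $f_n\to0$ in $L^r(\R^2)$ for all $2<r<\infty$, hence $\int|f_n|^4\ln(|f_n|^2/\sqrt e)\,dx\to0$ by \eqref{eq:est-log}, so $\liminf E^0_\gamma(f_n)\ge\gamma\rho$; but evaluating $E^0_\gamma$ on a lowest--Landau--level trial function (a Gaussian $c\,e^{-\gamma|x|^2/2}$ when $\gamma\rho$ is small, a spread-out annular lowest--Landau--level profile in general), whose kinetic part equals $\gamma\rho$ and whose logarithmic part is strictly negative, shows $I^0_\gamma(\rho)<\gamma\rho$, a contradiction. \emph{(iii) No dichotomy.} This amounts to the strict subadditivity $I^0_\gamma(\rho)<I^0_\gamma(\alpha)+I^0_\gamma(\rho-\alpha)$ for $0<\alpha<\rho$, which I would obtain from the strict concavity on $(0,\|Q\|_{L^2}^2]$ of the reduced function $J(\sigma):=I^0_\gamma(\sigma)-\gamma\sigma$ (note $J(0)=0$): since $J$ is controlled by the minimization of $\tfrac12\int|f|^4\ln(|f|^2/\sqrt e)\,dx$ over the lowest Landau level under the constraint $\|f\|_{L^2}^2=\sigma$, it can be estimated quantitatively via the sharp lowest--Landau--level inequality above, and it is precisely at this point that the assumptions $\rho\le\|Q\|_{L^2}^2$ and $\gamma<\tfrac1{2e^{3/2}}$ are invoked. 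With vanishing and dichotomy excluded, Lemma~\ref{lem:conc-comp} provides magnetic translations $\tau_{y_n}$ along which $\tau_{y_n}f_n\to\phi$ in $L^2(\R^2)$; interpolating with the $H^1_A$-bound upgrades this to convergence in every $L^r(\R^2)$, $2\le r<\infty$, so the logarithmic term passes to the limit (again by \eqref{eq:est-log}) while $\|\nabla_A\cdot\|_{L^2}$ is weakly lower semicontinuous; hence $\|\phi\|_{L^2}^2=\rho$ and $E^0_\gamma(\phi)\le I^0_\gamma(\rho)$, so $\phi$ is a minimizer and $\tau_{y_n}f_n\to\phi$ strongly in $H^1_A(\R^2)$.

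The same analysis applies to an arbitrary minimizing sequence for $I^0_\gamma(\rho)$: up to extraction and a magnetic translation it converges in $H^1_A(\R^2)$ to an element of the set $\mathcal G$ of minimizers; since $E^0_\gamma$ and the $L^2$-norm are invariant under magnetic translations, $\mathcal G$ is itself invariant under them, so in fact $\dist_{H^1_A}(f_n,\mathcal G)\to0$ (along a subsequence) for every minimizing sequence. Moreover each $\phi\in\mathcal G$ solves the Euler--Lagrange equation, which -- through the term $|A|^2=\gamma^2|x|^2$ hidden in $(-i\nabla-A)^2$ -- contains a harmonic potential, so elliptic regularity yields $\mathcal G\subset\Sigma$, and in particular the flow of \eqref{eq:nls} issued from $\Sigma$ data near $\mathcal G$ is globally well defined. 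Orbital stability then follows by the standard contradiction scheme: were it false, there would exist $\epsilon>0$, $\Sigma$ data $u_0^n$ with $\dist_{H^1_A}(u_0^n,\mathcal G)\to0$, and times $t_n$ with $\dist_{H^1_A}(u^n(t_n),\mathcal G)\ge\epsilon$; but $M(u^n(t_n))=M(u_0^n)\to\rho$ and $E^0_\gamma(u^n(t_n))=E^0_\gamma(u_0^n)\to I^0_\gamma(\rho)$ by the conservation laws and the continuity of $M$ and $E^0_\gamma$, so, after rescaling the mass to $\rho$ and using the continuity of $\sigma\mapsto I^0_\gamma(\sigma)$, the sequence $(u^n(t_n))$ is minimizing for $I^0_\gamma(\rho)$; hence $\dist_{H^1_A}(u^n(t_n),\mathcal G)\to0$ along a subsequence, contradicting the choice of $t_n$.

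The crux is the strict subadditivity of step~(iii). Because the magnetic field is not scaling--covariant, the customary homogeneity arguments are unavailable, and one must instead analyze the lowest--Landau--level minimization quantitatively; the conditions $0<\rho\le\|Q\|_{L^2}^2$ and $0<\gamma<\tfrac1{2e^{3/2}}$ are exactly what makes this analysis succeed (they also comfortably ensure the coercivity of step~(i) and the bound $I^0_\gamma(\rho)<\gamma\rho$ of step~(ii)). The remaining ingredients -- the magnetic concentration--compactness lemma, the magnetic and lowest--Landau--level Gagliardo--Nirenberg inequalities, and the elliptic--regularity upgrade from $H^1_A$ to $\Sigma$ -- are comparatively routine.
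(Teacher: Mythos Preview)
Your overall architecture---concentration--compactness in $H^1_A(\R^2)$, then the Cazenave--Lions stability scheme---is the same as the paper's, and the boundedness step and the orbital-stability argument are fine (though for (i) the paper uses the much simpler pointwise bound $E^0_\gamma(f)\ge\tfrac12\|\nabla_Af\|_{L^2}^2-\tfrac{\sqrt e}{2}\rho$, which needs neither $\rho\le\|Q\|_{L^2}^2$ nor any Gagliardo--Nirenberg input).

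The genuine gap is step (iii). You reduce the exclusion of dichotomy to the strict concavity of $J(\sigma)=I^0_\gamma(\sigma)-\gamma\sigma$ on $(0,\|Q\|_{L^2}^2]$, and then assert that this follows from ``the sharp lowest-Landau-level inequality''---but you do not prove it, and the mechanism is unclear. Minimizers of $I^0_\gamma(\sigma)$ have no reason to lie in the lowest Landau level, so $J(\sigma)$ is \emph{not} the infimum of the logarithmic term over LLL functions of mass $\sigma$; and even granting such a reduction, a quartic bound like $\|f\|_{L^4}^4\le\tfrac{\gamma}{2\pi}\|f\|_{L^2}^4$ does not by itself yield strict concavity in $\sigma$ of the resulting value function. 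You correctly flag this as ``the crux'', but the proposal leaves it as an assertion.

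The paper's route here is different and avoids concavity entirely. First, the hypothesis $0<\gamma<\tfrac{1}{2e^{3/2}}$ is used, via a Gaussian trial function $\lambda e^{-b|x|^2}$ with a carefully optimized $b$, to prove the stronger statement $I^0_\gamma(\rho)<0$ (Lemma~\ref{lem:data}). This negativity then drives both exclusions: vanishing would force $I^0_\gamma(\rho)\ge0$; for dichotomy, one exploits the explicit rescaling identity
\[
E^0_\gamma(f)=\lambda^{-4}E^0_\gamma(\lambda f)+\frac{1-\lambda^{-2}}{2}\|\nabla_Af\|_{L^2}^2-\frac{\ln(\lambda^2)}{2}\|f\|_{L^4}^4,
\]
applies it to $g_n$ and $h_n$ with $\lambda_n=\sqrt\rho/\|g_n\|_{L^2}$ and $\mu_n=\sqrt\rho/\|h_n\|_{L^2}$, bounds $\|\nabla_A(\cdot)\|_{L^2}^2$ from below by the sharp magnetic Gagliardo--Nirenberg inequality (Lemma~\ref{lem:GNA}), and arrives at
\[
I^0_\gamma(\rho)\ \ge\ \frac{a^2+(\rho-a)^2}{\rho^2}\,I^0_\gamma(\rho)\ +\ \text{(strictly positive term)},
\]
where positivity of the last term is exactly the condition $C_4\rho\le1$, i.e.\ $\rho\le\|Q\|_{L^2}^2$. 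Since the coefficient in front of $I^0_\gamma(\rho)$ lies in $(0,1)$, this forces $I^0_\gamma(\rho)>0$, contradicting $I^0_\gamma(\rho)<0$. So in the paper the two hypotheses have cleanly separated roles: $\gamma<\tfrac{1}{2e^{3/2}}$ secures $I^0_\gamma(\rho)<0$, and $\rho\le\|Q\|_{L^2}^2$ makes the rescaling-plus-GN computation go through. Your proposal blurs these roles and, more importantly, does not supply the missing argument for strict subadditivity.
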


The assumption $\rho \le \|Q\|^2_{L^2}$ is probably technical, due to
our argument (see after \eqref{eq:cond-rho}). The proof also relies on
the property $I^0_\gamma(\rho) <0$ (see Proposition~\ref{prop:orbi-cri-1}). In the case $K_3=V_0=\gamma=\Omega=0$, the latter condition was shown in \cite{CaSp-p} for any $\rho>0$. In our setting, showing that $I^0_\gamma(\rho)<0$ is more complicated since the scaling argument used in \cite{CaSp-p} does not work because of the presence of magnetic potential. However, by using a trial function of the form $\lambda e^{-b|x|^2}$ with suitable positive constants $\lambda$ and $b$, we are able to show (see Lemma~\ref{lem:data}) that both conditions $\rho \le \|Q\|^2_{L^2}$ and $I^0_\gamma(\rho) <0$ are fulfilled by some data $f \in H^1_A(\R^2)$. This also leads to a restriction  on the validity of $\gamma$. We refer the reader to Subsection 3.2 for more details.

When $V_0>0$, the above-mentioned concentration-compactness argument does not work due to the lack of spatial translation of the term
\[
\int_{\R^2} e^{-\gamma_0|x|^2} |f(x)|^2 dx.
\]
To overcome the difficulty, we restrict our consideration on
$H^1_{A,\rad}(\R^2)$ the space of radially symmetric functions of
$H^1_A(\R^2)$. Note that this restriction has a drawback since we no longer see the effect of rotation to the equation as $L_z f=0$ for a radial function $f$. We consider
\begin{align} \label{eq:I-rho-rad}
I_{\gamma,\rad}(\rho):= \inf \left\{E_\gamma(f) \ : \ f \in H^1_{A,\rad}(\R^2), \|f\|^2_{L^2} = \rho\right\},
\end{align}
where $E_\gamma(f)$ is as in \eqref{eq:E-Omega}. We have the following result.
\begin{theorem} \label{theo:orbi-cri-2}
	Let $K_3=0$, $V_0 \ge 0$, $\gamma,\gamma_0>0$, and $\Omega =\gamma$. Then for any $\rho>0$, there exists $\phi \in H^1_{A,\rad}(\R^2)$ such that $E_\gamma(\phi) = I_{\gamma,\rad}(\rho)$ and $\|\phi\|^2_{L^2} = \rho$. Moreover, the set of minimizers for $I_{\gamma,\rad}(\rho)$ is orbitally stable under the flow of \eqref{eq:nls}.
\end{theorem}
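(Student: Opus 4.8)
The plan is to exploit the resonance $\Omega=\gamma$ together with the radial constraint to restore the compactness that is absent on all of $H^1_A(\R^2)$, and then to run the classical Cazenave--Lions variational scheme. The starting observation is that if $f$ is radial then $L_z f\equiv 0$, so $L(f)=0$ and
\[
\|\nabla_A f\|_{L^2}^2=\|\nabla f\|_{L^2}^2+\gamma^2\|xf\|_{L^2}^2 .
\]
Hence $H^1_{A,\rad}(\R^2)$ coincides with the radial subspace $\Sigma_\rad$ of $\Sigma$, with equivalent norms, and on radial functions the functional $E_\gamma$ from \eqref{eq:E-Omega} agrees with the conserved energy $E_\Omega$ for $\Omega=\gamma$; in particular the compact embedding $\Sigma\hookrightarrow L^r(\R^2)$, $2\le r<\infty$, restricts to a compact embedding $H^1_{A,\rad}(\R^2)\hookrightarrow L^r(\R^2)$. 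In contrast with Theorem~\ref{theo:orbi-cri-1}, no sign condition on $I_{\gamma,\rad}(\rho)$ nor smallness of $\gamma$ will be needed.

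For the existence part, I would first check that $I_{\gamma,\rad}(\rho)$ is finite: it is $<+\infty$ by testing a Gaussian of mass $\rho$, and $>-\infty$ because, for $\|f\|_{L^2}^2=\rho$, the logarithmic integrand is bounded below on $\{|f|\ge 1\}$ with negative part of mass $\lesssim\rho$, while it is $\lesssim_\eps |f|^{4-\eps}$ on $\{|f|<1\}$, so that by \eqref{eq:est-log}, the magnetic Gagliardo--Nirenberg inequality and Young's inequality,
\[
E_\gamma(f)\ \ge\ \tfrac14\|\nabla_A f\|_{L^2}^2+V_0\int_{\R^2}e^{-\gamma_0|x|^2}|f|^2\,dx-C(\rho)\ \ge\ -C(\rho).
\]
Consequently any minimizing sequence $(f_n)\subset H^1_{A,\rad}(\R^2)$ with $\|f_n\|_{L^2}^2=\rho$ is bounded in $H^1_{A,\rad}=\Sigma_\rad$; extracting a subsequence, $f_n\rightharpoonup\phi$ in $H^1_A$ and, by the radial compact embedding, $f_n\to\phi$ strongly in $L^r(\R^2)$ for all $2\le r<\infty$, so $\phi$ is radial and $\|\phi\|_{L^2}^2=\rho$. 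Letting $n\to\infty$ using weak lower semicontinuity of $\|\nabla_A\cdot\|_{L^2}$, the $L^2$-convergence together with boundedness of $e^{-\gamma_0|x|^2}$, and continuity of $f\mapsto\int|f|^4\ln(|f|^2/\sqrt e)\,dx$ on $L^{4-\eps}\cap L^{4+\eps}$ (as in the proof of Theorem~\ref{theo:orbi-low}), one gets $E_\gamma(\phi)\le\liminf E_\gamma(f_n)=I_{\gamma,\rad}(\rho)$, hence $\phi$ is a minimizer. Since all terms but the kinetic one pass to the limit, $\|\nabla_A f_n\|_{L^2}\to\|\nabla_A \phi\|_{L^2}$, which promotes the weak convergence to strong convergence $f_n\to\phi$ in $H^1_A$, i.e. in $\Sigma_\rad$.

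For orbital stability I would argue by contradiction. Because $V$ is radial and $L_z$ commutes with rotations, uniqueness in Theorem~\ref{theo:GWP} implies that the flow of \eqref{eq:nls} preserves radial symmetry, and for radial data the quantities $M$ and $E_\Omega=E_\gamma$ are conserved. Write $\G_{\gamma,\rad}(\rho)$ for the nonempty set of minimizers of $I_{\gamma,\rad}(\rho)$. If it were not orbitally stable there would exist $\eps_0>0$, radial data $u_{0,n}$ with $\inf_{\phi\in\G_{\gamma,\rad}(\rho)}\|u_{0,n}-\phi\|_\Sigma\to 0$, and times $t_n$ with $\inf_{\phi\in\G_{\gamma,\rad}(\rho)}\|u_n(t_n)-\phi\|_\Sigma\ge\eps_0$ for the corresponding radial solutions $u_n$. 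By continuity of $M$ and $E_\gamma$ on $\Sigma$ and by conservation, $M(u_n(t_n))\to\rho$ and $E_\gamma(u_n(t_n))\to I_{\gamma,\rad}(\rho)$; after the routine rescaling normalising the mass to exactly $\rho$ (harmless since $E_\gamma$ and $I_{\gamma,\rad}$ vary continuously under it), $(u_n(t_n))$ is a radial minimizing sequence for $I_{\gamma,\rad}(\rho)$, so by the previous paragraph a subsequence converges in $\Sigma$ to an element of $\G_{\gamma,\rad}(\rho)$, contradicting the bound $\eps_0$. Hence $\G_{\gamma,\rad}(\rho)$ is orbitally stable.

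The main obstacle here is essentially conceptual and is concentrated in the first step: realising that the critical case $\Omega=\gamma$ turns the kinetic energy into a constant-field magnetic energy, and that on radial functions this magnetic energy is exactly $\|\nabla f\|_{L^2}^2+\gamma^2\|xf\|_{L^2}^2$, so that the harmonic confinement lost when passing to the magnetic gauge reappears and compactness follows. Among the technical points, the only one that is not completely routine is the lower bound on $E_\gamma$, which genuinely relies on the sign of the logarithmic nonlinearity at large amplitudes (pure Sobolev/Gagliardo--Nirenberg estimates control $\|f\|_{L^{4+\eps}}^{4+\eps}$ only by a superquadratic power of $\|\nabla_A f\|_{L^2}$); the continuity of the logarithmic nonlinearity under weak/strong limits and the radial invariance of the $\Sigma$-flow also need a short verification.
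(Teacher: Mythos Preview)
Your proof is correct and follows essentially the same scheme as the paper: a coercive lower bound on $E_\gamma$ shows minimizing sequences are bounded in $H^1_{A,\rad}$, compactness produces a minimizer and upgrades weak to strong convergence, and orbital stability is then obtained by the Cazenave--Lions contradiction argument. The one genuine variation is the source of compactness. You observe that for radial $f$ one has $L_zf=0$ and hence $\|\nabla_A f\|_{L^2}^2=\|\nabla f\|_{L^2}^2+\gamma^2\|xf\|_{L^2}^2$, so that $H^1_{A,\rad}=\Sigma_\rad$ with equivalent norms, and then invoke the compact embedding $\Sigma\hookrightarrow L^r(\R^2)$ for $2\le r<\infty$; the paper instead uses the diamagnetic inequality $|\nabla|f||\le|\nabla_A f|$ to factor through $H^1_\rad(\R^2)$ and its compact embedding into $L^r$. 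Your route is slightly cleaner in that it yields $L^2$-compactness directly (the Strauss radial embedding alone is not compact at $r=2$), and it also makes explicit why the flow preserves radial data, a point the paper leaves implicit.
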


\begin{remark}
	As mentioned above that the rotation term vanishes for radially symmetric functions, the result in Theorem \ref{theo:orbi-cri-2} holds for any $\Omega \geq 0$. In particular, it gives another (radial) solution, besides the one obtained in Theorem \ref{theo:orbi-cri-1}, to \eqref{eq:ground}.
\end{remark}
The proof of Theorem \ref{theo:orbi-cri-2} relies on the following compact embedding
\begin{align} \label{eq:com-emb}
H^1_{A,\rad}(\R^2) \ni f \mapsto |f| \in L^r(\R^2)
\end{align}
for all $2 \le  r<\infty$. This compact embedding follows from the well-known compact embedding $H^1_{\rad}(\R^2)\hookrightarrow L^r(\R^2)$ for all $2 \le r<\infty$ and the fact that
\[
H^1_{A,\rad}(\R^2) \ni f \mapsto |f| \in H^1_{\rad}(\R^2)
\]
is continuous due to the diamagnetic inequality (see e.g. \cite{LiLo01})
\[
|\nabla |f|(x)| \le |\nabla_A f(x)| \quad \text{a.e. } x \in \R^2.
\]

When the rotational speed exceeds the critical value,
i.e. $\Omega>\gamma$, we have the following nonexistence
of minimizers for the constrained variational problem
  $I_\Omega(\rho)$. 

\begin{theorem} \label{theo:non-exis}
	Let $K_3 =0$, $\gamma,\gamma_0>0$, $V_0\ge 0$, and $\Omega
        >\gamma$. Then for any $\rho>0$, there is no minimizer for
        $I_\Omega(\rho)$,  i.e., $I_\Omega(\rho)=-\infty$.
\end{theorem}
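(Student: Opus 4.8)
The plan is to prove directly that $I_\Omega(\rho)=-\infty$ for every $\rho>0$ by exhibiting an explicit family $(f_m)_{m\in\N}\subset\Sigma$ with $\|f_m\|_{L^2}^2=\rho$ along which $E_\Omega(f_m)\to-\infty$. The mechanism is that for $\Omega>\gamma$ the angular momentum term is no longer controlled by the magnetic kinetic energy of \eqref{eq:E-Omega}; the configurations that reveal this are the lowest Landau level states associated with $A(x)=\gamma(-x_2,x_1)$, which carry arbitrarily large angular momentum at a fixed (and minimal) magnetic kinetic cost. Concretely, writing $z=x_1+ix_2$, I would take
\[
f_m(x)=c_m\,z^m e^{-\gamma|z|^2/2},\qquad c_m^2=\frac{\rho\,\gamma^{m+1}}{\pi\,m!},
\]
chosen so that $\|f_m\|_{L^2}^2=\rho$; since $f_m$ is a polynomial times a Gaussian, $f_m\in\Sigma$.

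Using \eqref{eq:V-intro} and inserting $\pm\gamma L(f)$, the energy splits as
\[
E_\Omega(f)=\Bigl[\tfrac12\|\nabla f\|_{L^2}^2+\tfrac{\gamma^2}{2}\|xf\|_{L^2}^2-\gamma L(f)\Bigr]+V_0\!\int_{\R^2}\!e^{-\gamma_0|x|^2}|f|^2\,dx+\tfrac12\!\int_{\R^2}\!|f|^4\ln\!\Bigl(\tfrac{|f|^2}{\sqrt e}\Bigr)dx-(\Omega-\gamma)L(f),
\]
where, by \eqref{eq:E-Omega}, the bracket equals $\tfrac12\|\nabla_A f\|_{L^2}^2\ge0$. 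For $f=f_m$ all four terms are explicit: routine polar-coordinate (Gamma-function) integrals give $\|\nabla f_m\|_{L^2}^2=\gamma\rho(m+1)$ and $\|xf_m\|_{L^2}^2=\rho(m+1)/\gamma$, while $L_z\bigl(z^m e^{-\gamma|z|^2/2}\bigr)=m\,z^m e^{-\gamma|z|^2/2}$ gives $L(f_m)=m\rho$. Hence the bracket equals $\gamma\rho(m+1)-\gamma m\rho=\gamma\rho$ for every $m$ — this is precisely the statement that the $f_m$ lie in the lowest Landau level — whereas $-(\Omega-\gamma)L(f_m)=-(\Omega-\gamma)\,m\rho\to-\infty$ because $\Omega>\gamma$.

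It then suffices to check that the two remaining terms stay bounded in $m$. The Gaussian term is trivially controlled, $0\le V_0\int_{\R^2}e^{-\gamma_0|x|^2}|f_m|^2\,dx\le V_0\rho$ (in fact it decays like $(\gamma/(\gamma+\gamma_0))^{m+1}$). For the logarithmic term, the maximum of $|f_m|^2$, attained on the circle $|x|^2=m/\gamma$, satisfies $\|f_m\|_{L^\infty}^2=\tfrac{\rho\gamma}{\pi}\cdot\tfrac{m^m e^{-m}}{m!}\le\tfrac{\rho\gamma}{\pi\sqrt{2\pi m}}$ by Stirling, so $\|f_m\|_{L^\infty}\to0$; interpolating with $\|f_m\|_{L^2}^2=\rho$ keeps $\|f_m\|_{L^{4-\eps}}$ and $\|f_m\|_{L^{4+\eps}}$ bounded, and then \eqref{eq:est-log} shows that the logarithmic energy of $f_m$ is $\O(1)$. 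Collecting the four contributions, $E_\Omega(f_m)=\gamma\rho+\O(1)-(\Omega-\gamma)\,m\rho\to-\infty$ as $m\to\infty$, which forces $I_\Omega(\rho)=-\infty$; in particular there is no minimizer.

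There is no essential difficulty once the right competitors are identified; the only point requiring a little care is the uniform-in-$m$ bound on the sign-indefinite logarithmic energy, which is exactly why I record the $L^\infty$-decay of the $f_m$. The rest is a handful of elementary Gaussian integrals, and one can double-check the lowest-Landau-level identity $\tfrac12\|\nabla f_m\|_{L^2}^2+\tfrac{\gamma^2}{2}\|xf_m\|_{L^2}^2-\gamma L(f_m)=\gamma\rho$ directly against \eqref{eq:E-Omega} as a consistency check on signs.
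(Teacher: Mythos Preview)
Your proof is correct and follows essentially the same approach as the paper: both use the same central vortex states $f_m(x)=c_m\,z^m e^{-\gamma|z|^2/2}$ (the lowest Landau level functions) with the same normalization, exploit $L(f_m)=m\rho$ so that $-(\Omega-\gamma)L(f_m)\to-\infty$, and show the remaining terms are bounded. The only cosmetic differences are your cleaner grouping via $\tfrac12\|\nabla_A f_m\|_{L^2}^2=\gamma\rho$ instead of computing $\|\nabla f_m\|_{L^2}^2$ and $\int V|f_m|^2$ separately, and your control of the logarithmic term via the $L^\infty$ decay (Stirling) rather than the paper's explicit $L^6$ computation.
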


The proof of Theorem \ref{theo:non-exis} is based on an idea of Bao, Wang, and Markowich
\cite[Section~3.2]{BWM}, using the central vortex state with winding number $m$, namely
\[
f_m(x)=f_m(r,\theta) =\sqrt{\frac{\rho \gamma^{m+1}}{\pi m!}}r^m e^{-\frac{\gamma|x|^2}{2}} e^{im\theta}, \quad m \in \N,
\]
where $(r,\theta)$ are the polar coordinates in $\R^2$. Physically, when the angular velocity of rotation exceeds the trapping frequency,
the harmonic potential cannot provide enough necessary centripetal force that counteracts the centrifugal force caused by the rotation,
and the gas may fly apart. In the classical rotating BEC with purely power-type nonlinearity, the nonexistence of prescribed mass standing
waves was proved by Bao, Wang, and Markowich \cite{BWM}.

This work is organized as follows. In Section \ref{sec:cauchy}, we
study the local well-posedness for \eqref{eq:nls} and prove the global
existence given in Theorem \ref{theo:GWP}. In Section \ref{sec:orbi},
we investigate the existence/nonexistence and the orbital stability of
prescribed mass standing waves for \eqref{eq:nls} given in Theorems \ref{theo:orbi-low}, \ref{theo:orbi-cri-1}, \ref{theo:orbi-cri-2}, and
\ref{theo:non-exis}. Finally, we give some information on stationary
solutions of \eqref{eq:ground} in the radial case in
Appendix~\ref{sec:ground}, and characterize prescribed mass minimizers
in Appendix~\ref{sec:charac}.

\section{Cauchy problem}\label{sec:cauchy}
In the case $K_3 = 0$, Theorem \ref{theo:GWP} is reminiscent of the results from \cite{AnMaSp12} for the
local and global well-posedness. We present an alternative proof, to show that local solutions are actually global, which simplifies the approach, and yields better bounds. The asymptotic extinction in the case $K_3>0$ is obtained by adapting the arguments from \cite{AnSp10} based on the introduction of a suitable pseudo-energy (see also \cite{AnCaSp15} for generalizations).

\subsection{Local well-posedness}
\label{sec:LWP}

Local well-posedness follows from a fixed point argument on Duhamel's formula
\begin{equation}\label{eq:duhamel}
\psi(t) =U(t) \psi_0 - i \lambda \int_0^t U(t-s)f(u)(s) \, ds,
\end{equation}
where here and in the following, we denote
\[
f(z)= z|z|^2\ln (|z|^2)-iK_3|z|^4z, \quad z\in \C.
\]
The notation $U(\cdot)$ stands for the linear propagator, that is, $U(t) \psi_0 =\psi_{\rm lin}(t)$ is the solution to
\begin{equation}\label{eq:lin}
 i\d_t \psi_{\rm lin} +\frac{1}{2}\Delta \psi_{\rm lin}
 =V(x) \psi_{\rm lin}
 -\Omega L_z\psi_{\rm lin} ,\quad  \psi_{\rm lin}(0,x) = \psi_0.
\end{equation}
As noticed in \cite{AnMaSp12}, $U(t) = e^{-itH(x,D_x)}$, where
\begin{equation*}
  H(x,\xi) = \frac{1}{2}|\xi|^2 +V(x) +\Omega (x_1\xi_2-x_2\xi_1),
\end{equation*}
enters into the general framework of \cite{Kitada80}, where the fundamental solution, that is, the kernel associated to $U(\cdot)$, is constructed. However, this does not yield directly local in time dispersive properties, since unlike in \cite{Fujiwara}, the solution to\footnote{We do not include here the semi-classical parameter present in \cite{Fujiwara,Kitada80}.}
\begin{equation*}
  i\d_t \psi  + H(t,x,-i\d_x)\psi=0\quad ;\quad \psi_{\mid t=0}=\varphi,
\end{equation*}
for $H=H(t,x,\xi)$ smooth, real-valued and at most quadratic in $(x,\xi)$ (locally in time), is represented as
\begin{equation*}
  U(t)\varphi(x) = \int_{\R^2}e^{i\phi(t,x,\xi)}a(t,x,\xi)\hat \varphi(\xi)d\xi,
\end{equation*}
that is, an oscillatory integral involving the Fourier transform of $\varphi$, and not $\varphi$ directly. Consequently, dispersive $L^1-L^\infty$ estimates cannot be inferred in general, as shown by the trivial case $H=0$: it enters the framework of \cite{Kitada80}, the above representation holds with $a=\text{constant}$ (whose value depends on the definition of the Fourier transform), but no Strichartz estimate is available, of course.
\smallbreak

In the present case, local dispersive estimates are available though, as we show by relying on the isotropy of $V$ and $L_z$; this corresponds to the trick presented in
\cite{AnMaSp12} in the 3D case. This alternative approach not only yields dispersive estimates, but also makes it possible to study the Cauchy problem as if the rotation term was absent. Define the function $\varphi$ by
\begin{equation}\label{eq:chgt-fct}
  \varphi(t,x) = \psi\(t,x_1\cos(\Omega t)+x_2\sin(\Omega t),-x_1\sin(\Omega
  t)+x_2\cos (\Omega t)\).
\end{equation}
We check that since $V$ depends only on $|x|^2$ (and is therefore invariant under rotation), $\psi$ solves \eqref{eq:nls} if and only if $\varphi$
solves
\begin{equation}\label{eq:nls2}
\left\{
\begin{aligned}
 i\d_t \varphi +\frac{1}{2}\Delta \varphi
 &=V \varphi+
 \varphi|\varphi|^2\ln (|\varphi|^2) -iK_3|\varphi|^4\varphi,\quad (t,x)\in \R_+ \times \R^2,\\
  \varphi(0,x) &= \psi_0,
\end{aligned}
\right.
\end{equation}
that is, \eqref{eq:nls} with $\Omega=0$. Now for the perturbed harmonic oscillator $H = -\tfrac{1}{2}\Delta+V$, local in time dispersive estimates are available: from \cite{Fujiwara}, there exists $\delta>0$ such that
\begin{equation}\label{eq:disp-free}
  \|e^{-itH}\psi_0\|_{L^\infty}\lesssim
  \frac{1}{|t|}\|\psi_0\|_{L^1},\quad |t|\le \delta.
\end{equation}
Note that $\delta$ is necessarily finite, as $H$ possesses eigenvalues (as a consequence from e.g., \cite[Theorem XIII.67]{ReedSimon4}). This implies local in time Strichartz estimates (see e.g., \cite{CazCourant}) and so, as long as bounded time intervals only are involved, \eqref{eq:duhamel} can be studied like in the case where $U(t)= e^{it\Delta}$. Note that since \eqref{eq:chgt-fct} preserves the Lebesgue norms, we infer
\begin{equation}\label{eq:disp-omega}
\|e^{-it H_\Omega} \psi_0\|_{L^\infty} \lesssim \frac{1}{|t|} \|\psi_0\|_{L^1}, \quad |t| \le \delta,
\end{equation}
for all $\Omega \in \R$, with $\delta$ and an implicit multiplicative constant independent of $\Omega$.

In the case $K_3=0$, the analysis meets essentially the one
presented in \cite{CaSp-p}. In the general case $K_3\ge 0$,   $f\in C^1(\R^2;\R^2)$ satisfies $f(0)=0$,
\[
| f(u)| \lesssim |u|^{3-\eps} + |u|^{4} , \quad \forall \eps>0,
\]
as well as
\[
|\nabla f(u)|\lesssim ( |u|^{2-\eps}+|u|^3) |\nabla u|.
\]
We then obtain:
\begin{lemma}[Local well-posedness]\label{lem:lwp}
Let $\gamma>0, \Omega>0$, $V_0 \ge 0, K_3 \ge 0$, and $\psi_0 \in \Sigma$. Then there exist $T>0$ and a unique solution
\[
\varphi\in C([0, T];\Sigma)\cap L^3((0, T); L^6(\R^2)),
\]
to \eqref{eq:nls2}. Equivalently, in view of \eqref{eq:chgt-fct},
\[
\psi\in C([0, T];\Sigma)\cap L^3((0, T); L^6(\R^2)),
\]
solves \eqref{eq:nls}. Moreover, it satisfies \eqref{eq:mass}.  Either the solution is global in time,
\[
\varphi,\psi\in C(\R_+;\Sigma)\cap L^3_{\rm loc}(\R_+; L^6(\R^2)),
\]
or there exists  $T^*<\infty$ such that
\[
\lim_{t\to T^*} \| \nabla \varphi(t)\|_{L^2}=\lim_{t\to T^*} \| \nabla
\psi(t)\|_{L^2}=\infty.
\]
In the case $K_3=0$, we also have the conservation laws of mass, angular momentum, and energy.
\end{lemma}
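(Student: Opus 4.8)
The plan is to run the standard Kato-type fixed point argument for the integral equation \eqref{eq:duhamel}, working with the rotation-free equation \eqref{eq:nls2} and transferring everything back to $\psi$ via \eqref{eq:chgt-fct}, which is an isometry on every $L^p$ space and on $\Sigma$. First I would fix a Strichartz-admissible setup: in dimension $2$ the pair $(3,6)$ is admissible, and by \eqref{eq:disp-free} (equivalently \eqref{eq:disp-omega}) the propagator $e^{-itH}$ satisfies the local-in-time dispersive bound, hence local-in-time Strichartz estimates on any interval of length $\le\delta$; see \cite{CazCourant}. I would set up the contraction in the space
\[
X_T := \left\{ \varphi\in C([0,T];\Sigma)\cap L^3((0,T);L^6) : \|\varphi\|_{X_T}\le R\right\},
\]
with $R$ of the order of $\|\psi_0\|_\Sigma$, where the $\Sigma$-norm is controlled by estimating $\varphi$, $\nabla\varphi$ and $x\varphi$. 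The point that needs the structure of the problem is that $[H,x]$ and $[H,\nabla]$ are again operators of order $\le 1$ (the harmonic oscillator commutes nicely with the Heisenberg algebra generated by $x$ and $\nabla$), so one derives Strichartz estimates for $\nabla\varphi$ and $x\varphi$ as well, exactly as in \cite{Ca15} or \cite{CaSp-p}.

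The nonlinear estimates are the heart of the matter. Using the pointwise bounds on $f$ and $\nabla f$ stated just before the lemma, namely $|f(u)|\lesssim |u|^{3-\eps}+|u|^4$ and $|\nabla f(u)|\lesssim (|u|^{2-\eps}+|u|^3)|\nabla u|$, together with $x f(u) = f(u)\cdot(xu)/u$ so that $|xf(u)|\lesssim (|u|^{2-\eps}+|u|^3)|xu|$, I would estimate the Duhamel term in the dual Strichartz norm. Splitting $f$ into its subcritical piece ($|u|^{3-\eps}$, handled by Hölder in time exploiting the short interval, picking up a positive power $T^\theta$) and its quintic-type piece ($|u|^4$, borderline for $H^1(\R^2)$ but subcritical below the mass-critical exponent $|u|^{2}u$ — here the exponent $5$ in $|u|^4 u$ is $H^1$-critical in 2D, so this term is genuinely critical and one gains smallness not from $T$ but from the smallness of the $L^3_TL^6_x$ norm on short intervals, as in the treatment of the quintic NLS). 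This is the step I expect to be the main obstacle: one must carefully balance the Hölder exponents so that the quintic term closes in $L^3_TL^6_x\cap L^\infty_T H^1$ and the logarithmic (subcritical) term contributes the contraction factor. For the difference estimate one uses that $f$ is $C^1$ with the stated derivative bound, so $|f(u)-f(v)|\lesssim (|u|^{2-\eps}+|v|^{2-\eps}+|u|^3+|v|^3)|u-v|$, and the same splitting applies.

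Once the contraction is established on $[0,T]$ with $T$ depending only on $\|\psi_0\|_\Sigma$, uniqueness in $X_T$ is immediate, and uniqueness in the full class $C([0,T];\Sigma)\cap L^3L^6$ follows from a standard continuity/bootstrap argument. The mass identity \eqref{eq:mass} is obtained by multiplying \eqref{eq:nls2} by $\bar\varphi$, integrating, and taking imaginary parts — legitimate once $\varphi\in L^3L^6$ makes $\|\varphi\|_{L^6}^6$ locally integrable; when $K_3=0$ the same computation gives conservation of mass, and conservation of angular momentum and energy follow from the usual Noether-type manipulations (differentiating $L(\varphi)$ and $E_\Omega(\varphi)$ along the flow and using the equation), justified by the regularity afforded by Strichartz. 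Finally, the blow-up alternative: the local time $T$ depends only on $\|\psi_0\|_\Sigma$, and by \eqref{eq:mass} the $L^2$-norm is non-increasing, while $x\varphi$ is controlled in terms of $\nabla\varphi$ and the conserved/monotone quantities (via $|L(f)|\le \tfrac1{2\gamma}\|\nabla f\|_{L^2}^2+\tfrac\gamma2\|xf\|_{L^2}^2$ and the energy); hence the only way the solution can fail to extend is $\|\nabla\varphi(t)\|_{L^2}\to\infty$, and the corresponding statement for $\psi$ follows since \eqref{eq:chgt-fct} preserves $\|\nabla\cdot\|_{L^2}$ (the rotation is an orthogonal change of variables). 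The conservation laws in the case $K_3=0$ are recorded last, as above.
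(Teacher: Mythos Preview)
Your overall strategy---local Strichartz estimates for $e^{-itH}$ combined with a contraction in $C([0,T];\Sigma)\cap L^3_TL^6_x$, then transfer via the rotation \eqref{eq:chgt-fct}---matches the paper's intended approach. Two points deserve correction, one minor and one more substantive.

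First, the quintic term $|u|^4u$ is \emph{not} $H^1$-critical in two dimensions: there is no energy-critical power in $\R^2$ since $H^1(\R^2)\hookrightarrow L^r$ for every $r<\infty$. All pieces of the nonlinearity are therefore $H^1$-subcritical, and each contributes a positive power of $T$ in the contraction estimate. This only simplifies your argument.

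Second, and more seriously, your reduction of the blow-up alternative from $\|\varphi(t)\|_\Sigma\to\infty$ to $\|\nabla\varphi(t)\|_{L^2}\to\infty$ has a gap. You propose to control $\|x\varphi\|_{L^2}$ through the angular momentum bound and the energy, but when $K_3>0$ neither $L(\varphi)$ nor $E_0(\varphi)$ is conserved (indeed the energy is not even monotone without the pseudo-energy trick of Section~\ref{sec:GWP}), so this route does not close. The paper instead bypasses conservation laws entirely: one computes directly
\[
\frac{d}{dt}\|x\varphi(t)\|_{L^2}^2 = 2\,\IM\int_{\R^2}\bar\varphi\, x\cdot\nabla\varphi\,dx - 2K_3\int_{\R^2}|x|^2|\varphi|^6\,dx \le \|x\varphi(t)\|_{L^2}^2 + \|\nabla\varphi(t)\|_{L^2}^2,
\]
and Gronwall then shows that boundedness of $\|\nabla\varphi\|_{L^2}$ on $[0,T^*)$ forces boundedness of $\|x\varphi\|_{L^2}$ as well. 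This virial computation works uniformly in $K_3\ge 0$ and is the missing ingredient in your sketch.
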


\begin{proof}[Sketch of the proof]
We present the elements of the proof which require a little modification due to the presence of the damping term ($K_3>0$).

We recall that the standard blow-up alternative involves the condition
\[
\lim_{t\to T^*} \| \varphi(t)\|_{\Sigma}=\lim_{t\to T^*} \| \psi(t)\|_{\Sigma}=\infty.
\]
In view of \eqref{eq:mass}, the $L^2$-norm of $\psi$ remains bounded for positive time, and thus, so does the $L^2$-norm of $\varphi$. On the other hand, we compute
\begin{align*}
  \frac{d}{dt}\|x\varphi(t)\|_{L^2}^2& = -\IM
   \int_{\R^2}|x|^2\bar\varphi\Delta\varphi dx
                                       -2K_3\int_{\R^2}|x|^2|\varphi|^6
                                       dx\\
                                     & = 2\IM \int_{\R^2}\bar\varphi x\cdot\nabla \varphi dx
   -2K_3\int_{\R^2}|x|^2|\varphi|^6 dx.
\end{align*}
Using Cauchy--Schwarz and Young's inequalities, we infer that
\begin{equation}\label{eq:virial}
   \frac{d}{dt}\|x\varphi(t)\|_{L^2}^2\le \|x\varphi(t)\|_{L^2}^2 +\|\nabla
  \varphi(t)\|_{L^2}^2 .
\end{equation}
Therefore, if $\nabla \varphi$ remains bounded in $L^2$, then so does the $\Sigma$-norm, and the blow-up criterion can be reduced to the one stated in the lemma.

In the case $K_3=0$, the proof of conservation laws follows, for instance, from \cite{Oz06}.
\end{proof}

\begin{remark}[Non-radial potential]\label{rem:nonradial1}
  The case of a non-radial potential can be treated along essentially
  the same lines. Indeed, for a potential $V\in C^\infty(\R^2;\R)$,
  which is at most quadratic in the sense of \cite{Fujiwara}, that is,
  \begin{equation*}
    \d_x^\alpha V\in L^\infty(\R^2),\quad \forall \alpha\in \N^2, \
    |\alpha|\ge 2,
  \end{equation*}
  the function $\varphi$, given by \eqref{eq:chgt-fct}, solves
  \begin{equation*}
    i\d_t \varphi +\frac{1}{2}\Delta \varphi
 =\tilde V(t,x) \varphi+
 \varphi|\varphi|^2\ln (|\varphi|^2) -iK_3|\varphi|^4\varphi,
\end{equation*}
with $\tilde V(t,x)= V(x_1\cos(\Omega t)+x_2\sin(\Omega t),-x_1\sin(\Omega
  t)+x_2\cos (\Omega t))$. The time dependent potential $\tilde V$ is
  smooth in $(t,x)$, and at most quadratic in space (in the same sense
  as above), uniformly in time. It follows from \cite{Fujiwara} that
  \eqref{eq:disp-free} remains valid with $e^{-itH}$ replaced by the
  evolution group $U(s+t,s)$ associated to $\tilde H(t)
  =-\frac{1}{2}\Delta+\tilde V(t,x)$, hence \eqref{eq:disp-omega}
  holds in this case too. The rest of the proof of Lemma~\ref{lem:lwp}
  can be repeated exactly then.
\end{remark}

\subsection{Global well-posedness}
\label{sec:GWP}

In the case $K_3=0$, the following energy is independent of time,
\begin{equation*}
   E_0(\varphi) := \frac{1}{2}\|\nabla \varphi\|_{L^2}^2 + \int_{\R^2}V
  |\varphi|^2 dx+
  \frac{1}{2}\int_{\R^2}|\varphi|^4\ln\left(\frac{|\varphi|^2}{\sqrt e}\right)dx,
\end{equation*}
as well as the mass of $\varphi$. The positive part of the energy satisfies
\begin{align*}
  E_+(\varphi(t)) :&= \frac{1}{2}\|\nabla \varphi(t)\|_{L^2}^2
            +\int_{\R^2}V|\varphi(t)|^2 dx+ \frac{1}{2}
           \int_{|\varphi|^2>\sqrt
                     e}|\varphi(t,x)|^4\ln\(\frac{|\varphi(t,x)|^2}{\sqrt
                     e}\)dx\\
  &=
  E_0(\psi_0)+ \frac{1}{2}
    \int_{|\varphi|^2<\sqrt e}|\varphi(t,x)|^4\ln\(\frac{\sqrt
    e}{|\psi(t,x)|^2}\) dx\\
&\le  E_0 (\psi_0)+   \frac{\sqrt e}{2}  \int_{\R^2}|\varphi(t,x)|^2 dx,
\end{align*}
where we have used the easy bound $\ln a \le a$ for $a\ge 1$. Using the conservation of mass, this yields
\begin{equation*}
   E_+(\varphi(t))\lesssim 1,
\end{equation*}
and thus global existence (in the past as well, since for $K_3=0$, the equation is reversible), since $\|\varphi\|_{\Sigma}\lesssim E_+(\varphi)$. Note that even the $L^2$-norm of $\varphi$ is controlled by $E_+$, in view of the uncertainty principle (see e.g., \cite[\S~3.2]{Rauch91})
\begin{equation} \label{eq:uncer}
\|f\|^2_{L^2} \le \|\nabla f\|_{L^2} \|xf\|_{L^2}, \quad f \in \Sigma.
\end{equation}
 In particular, there exists $C>0$ such that
\begin{equation}\label{eq:borne-moment}
  \int_{\R^2} |x|^2|\varphi(t,x)|^2dx =  \int_{\R^2}
  |y|^2|\psi(t,y)|^2dy\le C,\quad \forall t\ge 0.
\end{equation}
\smallbreak

In the case $K_3>0$, following the strategy presented in \cite[Section~3.1]{AnSp10}, we introduce a pseudo-energy, for $k>0$,
\begin{equation*}
  E(k,\varphi) =E_0(\varphi)+k\|\varphi\|_{L^6}^6.
\end{equation*}
Adapting slightly the computations from \cite{AnCaSp15}, we find:
\begin{align*}
  \frac{d}{dt}E(k,\varphi(t))  &=K_3 \int_{\R^2}
     |\varphi|^4\RE\(\bar \varphi\Delta\varphi\)dx -3k\int_{\R^2}
  |\varphi|^4\IM\(\bar\varphi\Delta\varphi\)dx\\
             &\quad            -2K_3 \int_{\R^2}  V |\varphi|^6dx
 -2K_3\int_{\R^2} |\varphi|^8\ln(|\varphi|^2)dx-6kK_3\int_{\R^2} |\varphi|^{10}dx.
\end{align*}
Again, following the computations from \cite[Section~3.1]{AnSp10}, we introduce the polar factor related to $\varphi$,
\begin{equation*}
  \theta(t,x):=
\left\{
  \begin{array}{cl}
   \tfrac{\varphi(t,x)}{ |\varphi(t,x)|} & \quad\text{ if }\varphi(t,x)\not =0,\\
0& \quad\text{ if }\varphi(t,x) =0,
  \end{array}
\right.
\end{equation*}
and the above time derivative can be rewritten as
\begin{align*}
  \frac{d}{dt}E(k,\varphi(t))  &=-(K_3-6k) \int_{\R^2}
|\varphi|^4|\nabla\varphi|^2dx -4K_3\int_{\R^2}
|\varphi|^4\left|\nabla|\varphi|\right|^2dx \\
 &\quad                       -6k\int_{\R^2}
  |\varphi|^4\left| \RE(\bar\theta \nabla \varphi)
   -\IM(\bar\theta\nabla\varphi)\right|^2 dx
                        -2K_3 \int_{\R^2}  V |\varphi|^6dx \\
  &\quad-2K_3\int_{\R^2} |\varphi|^8\ln(|\varphi|^2)dx-6kK_3\int_{\R^2} |\varphi|^{10}dx.
\end{align*}
Picking $0<k<K_3/6$, the above relation implies
\begin{equation*}
   \frac{d}{dt}E\(k,\varphi(t)\)\le 2K_3\int_{|\varphi|<1}|\varphi|^8\ln
 \frac{1}{ |\varphi|^2} dx \le 2K_3\int_{\R^2} |\varphi|^6 dx.
\end{equation*}
Since the change of unknown \eqref{eq:chgt-fct} preserves the Lebesgue norms, \eqref{eq:mass} yields
\begin{equation*}
2K_3\int_{\R^2} |\varphi|^6 = -\frac{d}{dt}\|\varphi\|_{L^2}^2,
\end{equation*}
hence
\begin{equation*}
   \frac{d}{dt}\(E\(k,\varphi(t)\)+\|\varphi\|_{L^2}^2\)\le 0,
 \end{equation*}
 and we conclude like in the case $K_3=0$. In particular,
 \eqref{eq:borne-moment} holds.

 \begin{remark}[Non-radial potential, continued]\label{rem:nonradial2}
  For a
   general potential $V$ like in Remark~\ref{rem:nonradial1}, the natural energy associated to
   $\varphi$ becomes
   \begin{equation*}
      E_0(\varphi(t))=  \frac{1}{2}\|\nabla \varphi(t)\|_{L^2}^2 +
     \int_{\R^2}\tilde V(t,x)
  |\varphi(t,x)|^2 dx+
  \frac{1}{2}\int_{\R^2}|\varphi(t,x)|^4\ln\left(\frac{|\varphi(t,x)|^2}{\sqrt e}\right)dx,
\end{equation*}
and even for $K_3=0$, it is not constant in time if $V$ is not radial
($\d_t \tilde V\not
\equiv 0$). However, following either the virial computation from
\cite{Ca11} or the pseudo-energy argument from \cite{CaSi15}, we can
easily adapt the above argument and infer
that the global well-posedness in Theorem~\ref{theo:GWP} remains valid.
 \end{remark}
\subsection{Large time extinction in the presence of loss}
\label{sec:loss}

In view of \cite[Lemma~4.1]{AnCaSp15},
\begin{equation*}
  \|\varphi\|_{L^2} \lesssim \|\varphi\|_{L^6}^{3/5}\|x
  \varphi\|_{L^2}^{2/5} \lesssim \|\varphi\|_{L^6}^{3/5},
\end{equation*}
where we have used \eqref{eq:borne-moment}. This, together with \eqref{eq:mass}, yields
\begin{equation*}
  \frac{d}{dt}\|\varphi(t)\|_{L^2}^2 + C \|\varphi(t)\|_{L^2}^{10}\le 0,
\end{equation*}
for some uniform $C>0$. Recalling that the (nonnegative) solution of the ODE $\dot y +Cy^5=0$ with $y(0)=\|\psi_0\|_{L^2}^2$
satisfies $y(t)=\O(t^{-1/4})$,
this implies
\begin{equation*}
  \|\varphi(t)\|_{L^2}^2 \lesssim \frac{1}{t^{1/4}},
\end{equation*}
hence the decay announced in Theorem~\ref{theo:GWP}, since
$\|\varphi(t)\|_{L^2}= \|\psi(t)\|_{L^2}$.
Again, this conclusion remains true when $V$ satisfies the assumptions of
Remark~\ref{rem:nonradial1} and is not necessarily radial.

\section{Orbital stability} \label{sec:orbi}
In this section, we study the existence/nonexistence and stability of constraint mass standing waves associated to \eqref{eq:ground}. Recall that $K_3=0$ throughout this section. We will consider separately three cases: low rotational speed ($0<\Omega<\gamma$), critical rotational speed ($\Omega =\gamma$), and high rotational speed ($\Omega>\gamma$).

\subsection{Low rotational speed}\label{sec:low}
In this subsection, we consider the low rotational speed $0<\Omega <\gamma$. Let us start with the following observation, mimicking the
proof of \cite[Proposition~2.1]{ArNeSp19} (see also \cite[Theorem~7.8]{BHHZ-p}):

\begin{lemma} \label{lem:norm}
	Let $\gamma,\gamma_0>0$ and $V_0 \ge 0$. If $0<\Omega<\gamma$, then for any $f \in \Sigma$,
	\begin{align} \label{eq:norm}
	\|\nabla f\|^2_{L^2} + 2\int_{\R^2} V|f|^2dx-2 \Omega L(f) \simeq \|\nabla f\|^2_{L^2} + \|xf\|^2_{L^2}.
	\end{align}
\end{lemma}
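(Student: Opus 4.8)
The plan is to prove the equivalence \eqref{eq:norm} by establishing two-sided bounds. The lower bound ($\gtrsim$) is the substantive one; the upper bound ($\lesssim$) is immediate. For the upper bound, I would write $-2\Omega L(f) \le 2\Omega |L(f)| \le 2\Omega \|xf\|_{L^2}\|\nabla f\|_{L^2} \le \Omega(\|xf\|_{L^2}^2 + \|\nabla f\|_{L^2}^2)$ by Cauchy--Schwarz and Young, and bound $2\int V|f|^2 = \gamma^2\|xf\|_{L^2}^2 + 2V_0\int e^{-\gamma_0|x|^2}|f|^2 \le (\gamma^2 + 2V_0)\|xf\|_{L^2}^2$ crudely (or just $\le \gamma^2\|xf\|_{L^2}^2 + 2V_0\|f\|_{L^2}^2$ and absorb the mass term, though since $\Sigma \hookrightarrow L^2$ is not scale-uniform one should be slightly careful — actually $\int e^{-\gamma_0|x|^2}|f|^2 \le \|f\|_{L^\infty}^0\cdots$; cleanest is $\int e^{-\gamma_0|x|^2}|f|^2\le \sup_x(e^{-\gamma_0|x|^2})\|f\|_{L^2}^2 = \|f\|_{L^2}^2$, and then use \eqref{eq:uncer} to dominate $\|f\|_{L^2}^2 \le \|\nabla f\|_{L^2}\|xf\|_{L^2}\le \tfrac12(\|\nabla f\|_{L^2}^2+\|xf\|_{L^2}^2)$). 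Adding these gives the $\lesssim$ direction with constant depending on $\gamma,\gamma_0,V_0,\Omega$.

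For the lower bound, the key point is that $0<\Omega<\gamma$ leaves a coercive remainder. I would discard the nonnegative Gaussian term $2V_0\int e^{-\gamma_0|x|^2}|f|^2 \ge 0$ and reduce to showing
\[
\|\nabla f\|_{L^2}^2 + \gamma^2\|xf\|_{L^2}^2 - 2\Omega L(f) \gtrsim \|\nabla f\|_{L^2}^2 + \|xf\|_{L^2}^2.
\]
Using $|L(f)| \le \|xf\|_{L^2}\|\nabla f\|_{L^2}$ and then Young's inequality with a parameter $\varepsilon>0$ to be optimized, $2\Omega L(f) \le 2\Omega\|xf\|_{L^2}\|\nabla f\|_{L^2} \le \Omega\varepsilon \|\nabla f\|_{L^2}^2 + \tfrac{\Omega}{\varepsilon}\|xf\|_{L^2}^2$, so the left side is $\ge (1-\Omega\varepsilon)\|\nabla f\|_{L^2}^2 + (\gamma^2 - \tfrac{\Omega}{\varepsilon})\|xf\|_{L^2}^2$. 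I need both coefficients strictly positive, i.e. $\tfrac{1}{\gamma^2} < \tfrac{\varepsilon}{\Omega} < \tfrac1\Omega$, equivalently $\tfrac{\Omega}{\gamma^2} < \varepsilon < \tfrac1\Omega$; such $\varepsilon$ exists precisely when $\tfrac{\Omega}{\gamma^2} < \tfrac1\Omega$, i.e. $\Omega^2 < \gamma^2$, i.e. $\Omega < \gamma$ — exactly the hypothesis. Fixing such an $\varepsilon$ (say $\varepsilon = \tfrac12(\tfrac{\Omega}{\gamma^2}+\tfrac1\Omega)$) gives a constant $c = c(\gamma,\Omega) > 0$ with left side $\ge c(\|\nabla f\|_{L^2}^2 + \|xf\|_{L^2}^2)$, which is the $\gtrsim$ direction.

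I do not anticipate a genuine obstacle here: the lemma is a soft functional-analytic fact and the proof is two applications of Cauchy--Schwarz/Young with a well-chosen weight. The only mild subtlety is making sure the constants can be taken to depend only on the parameters $\gamma,\gamma_0,V_0,\Omega$ and not on $f$ (clear from the argument), and handling the mass term $\|f\|_{L^2}^2$ arising from the Gaussian cleanly via the uncertainty principle \eqref{eq:uncer} in the upper bound. One should also note that the bound $|L(f)|\le\|xf\|_{L^2}\|\nabla f\|_{L^2}$ is itself just Cauchy--Schwarz on $L(f) = \int \bar f\, i(x_2\partial_{x_1}-x_1\partial_{x_2})f$, bounding $|i(x_2\partial_{x_1}-x_1\partial_{x_2})f| \le |x||\nabla f|$ pointwise — this is already recorded in the excerpt just before \eqref{eq:H1A}, so it may simply be cited.
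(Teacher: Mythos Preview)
Your proposal is correct and follows essentially the same approach as the paper: both directions are handled via $|L(f)|\le \|xf\|_{L^2}\|\nabla f\|_{L^2}$ together with Young's inequality, the Gaussian part of $V$ is bounded by $\|f\|_{L^2}^2$ and absorbed via the uncertainty principle \eqref{eq:uncer} in the upper bound and simply discarded in the lower bound, and the existence of a Young parameter making both remaining coefficients positive is exactly the condition $\Omega<\gamma$. The paper makes the specific choice $\delta=\Omega^2/(\gamma^2+\Omega^2)$ (in your notation $\varepsilon=2\delta/\Omega$), but this is cosmetic.
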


\begin{proof}
	We first observe that by Cauchy-Schwarz and Young inequalities, we have for any $\delta>0$,
	\begin{align}
	\Omega |L(f)| &\le \Omega \left(\|x_1 f\|_{L^2} \|\partial_{x_2} f\|_{L^2} + \|x_2 f\|_{L^2} \|\partial_{x_1} f\|_{L^2} \right) \nonumber \\
	&\le \Omega \|xf\|_{L^2} \|\nabla f\|_{L^2}\nonumber \\
	&\le \delta \|\nabla f\|^2_{L^2} + \frac{\Omega^2}{4\delta} \|xf\|^2_{L^2}. \label{eq:Lz}
	\end{align}
	Using \eqref{eq:Lz} with $\delta=\frac{1}{2}$, we have
	\begin{align}
	B(f) &:= \|\nabla f\|^2_{L^2} + 2\int_{\R^2} V|f|^2dx-2 \Omega L(f)  \label{eq:Bf}\\
	&\le 2\|\nabla f\|^2_{L^2} + (\gamma^2+\Omega^2)
   \|xf\|^2_{L^2} + V_0 \|f\|^2_{L^2} \nonumber \\
	&\le \left(2+\frac{V_0}{2}\right)\|\nabla f\|^2_{L^2} + \left(\gamma^2 + \Omega^2+\frac{V_0}{2}\right) \|xf\|^2_{L^2}  \nonumber \\
	&\le C_1 \left(\|\nabla f\|^2_{L^2}+ \|xf\|^2_{L^2}\right), \nonumber
	\end{align}
	where we have used \eqref{eq:uncer} to get the third line. On the other hand, by \eqref{eq:Lz}, we have for any $\delta>0$,
	\begin{align*}
	B(f) \ge (1-2\delta) \|\nabla f\|^2_{L^2} + \left(\gamma^2-\frac{\Omega^2}{2\delta}\right) \|xf\|^2_{L^2} + 2V_0 \int_{\R^2} e^{-\gamma_0 |x|^2} |f(x)|^2 dx.
	\end{align*}
	We choose $\delta>0$ so that
	\[
	\gamma^2- \frac{\Omega^2}{2\delta} =\frac{\gamma^2-\Omega^2}{2} \quad \text{or} \quad \delta = \frac{\Omega^2}{\gamma^2+\Omega^2}.
	\]
	It follows that
	\begin{align*}
	B(f) \ge \frac{\gamma^2-\Omega^2}{\gamma^2+\Omega^2} \|\nabla f\|^2_{L^2} + \frac{\gamma^2-\Omega^2}{2} \|xf\|^2_{L^2} \ge C_2 \left(\|\nabla f\|^2_{L^2} + \|xf\|^2_{L^2} \right).
	\end{align*}
	The proof is complete.
\end{proof}

\noindent {\it Proof of Theorem \ref{theo:orbi-low}.}
	The proof is divided into two steps.
	
	{\bf Step 1.} We show the existence of minimizers for $I_\Omega(\rho)$. Let $\rho>0$ and $f \in \Sigma$ satisfy $\|f\|^2_{L^2} =\rho$.
	
	We first show that $I_\Omega(\rho)$ is well-defined.  We have
	\begin{align}
	E_\Omega(f) & = \frac{1}{2} B(f) + \frac{1}{2} \int_{|f|^2 >\sqrt{e}} |f|^4 \ln\left(\frac{|f|^2}{\sqrt{e}} \right) dx - \frac{1}{2}\int_{|f|^2 <\sqrt{e}} |f|^4 \ln \left(\frac{\sqrt{e}}{|f|^2}\right) dx \nonumber \\
	&\ge \frac{1}{2} B(f) - \frac{\sqrt{e}}{2} \int_{|f|^2<\sqrt{e}} |f|^2 dx \nonumber \\
	&\ge \frac{1}{2} B(f) - \frac{\sqrt{e}}{2} \rho, \label{eq:est-E}
	\end{align}
	where $B(f)$ is as in \eqref{eq:Bf}. Here we have used the fact that $\ln(1/\lambda) < 1/\lambda$ for $0<\lambda<1$. Using \eqref{eq:norm}, we see that $I_\Omega(\rho) >-\infty$.
	
	Next let $(f_n)_{n\ge 1}$ be a minimizing sequence for $I_\Omega(\rho)$. By \eqref{eq:est-E} and \eqref{eq:norm}, we see that $(f_n)_{n\ge 1}$ is a bounded sequence in $\Sigma$. Thus there exists $\phi \in \Sigma$ such that (up to a subsequence) $f_n \to\phi$ weakly in $\Sigma$ and strongly in $L^r(\R^2)$ for all $2\le r <\infty$ (see e.g., \cite[Theorem XIII.67]{ReedSimon4} for the compactness of the embedding $\Sigma \hookrightarrow L^r(\R^2)$). It follows that
	\begin{align} \label{eq:norm-phi}
	\|\phi\|^2_{L^2} = \lim_{n\rightarrow \infty} \|f_n\|^2_{L^2} =\rho.
	\end{align}
	By the strong convergence, H\"older's inequality, and the fact that
	\begin{align} \label{est-fg}
	\left| |f|^4 \ln \left(\frac{|f|^2}{\sqrt{e}}\right) - |g|^4 \ln \left(\frac{|g|^2}{\sqrt{e}}\right) \right| \le C |f-g| \left( |f|^2 + |f|^4 + |g|^2 +|g|^4\right),
	\end{align}
	we infer that
	\[
	\int_{\R^2} |\phi|^4 \ln \left( \frac{|\phi|^2}{\sqrt{e}}\right) dx = \lim_{n\rightarrow \infty} \int_{\R^2} |f_n|^4 \ln \left(\frac{|f_n|^2}{\sqrt{e}} \right) dx.
	\]
	Moreover, by the lower semicontinuity of the weak convergence and \eqref{eq:norm}, we have
	\[
	B(\phi) \le \liminf_{n\rightarrow \infty} B(f_n).
	\]
	It follows that
	\[
	E_\Omega(\phi) \le \liminf_{n\rightarrow \infty} E_\Omega(f_n) = I_\Omega(\rho)
	\]
	which together with \eqref{eq:norm-phi} show that $\phi$ is a minimizer for $I_\Omega(\rho)$. In view of Lemma~\ref{lem:norm}, we also have that (up to a subsequence)  $f_n$ converges to $\phi$ strongly in $\Sigma$.
	
	{\bf Step 2.} We show the orbital stability. We argue by contradiction. Suppose that $\G_\Omega(\rho)$ is not orbitally stable. There exist $\epsilon_0>0$ and $\phi_0 \in \G_\Omega(\rho)$ and a sequence $u_{0,n} \in \Sigma$ satisfying
	\begin{align} \label{eq:orbi-1}
	\lim_{n\rightarrow \infty} \|u_{0,n} - \phi_0\|_{\Sigma} =0
	\end{align}
	and a sequence of time $(t_n)_{n\ge 1} \subset \R$ such that
	\begin{align} \label{eq:orbi-2}
	\inf_{\phi \in \G_\Omega(\rho)} \|u_n(t_n) - \phi\|_{\Sigma} \ge \epsilon_0,
	\end{align}
	where $u_n$ is the solution to \eqref{eq:nls} with initial data $u_n(0) = u_{0,n}$.
	
	Since $\phi_0 \in \G_\Omega(\rho)$, we have $E_\Omega(\phi_0) = I_\Omega(\rho)$. By \eqref{eq:orbi-1}, \eqref{eq:norm} and Sobolev embedding, we have
	\[
	\|u_{0,n}\|^2_{L^2} \Tend n \infty\|\phi_0\|^2_{L^2} = \rho,
        \quad E_\Omega(u_{0,n}) \Tend n \infty E_\Omega(\phi_0) = I_\Omega(\rho).
	\]
	By the conservation of mass and energy, we get
	\[
	\|u_n(t_n)\|^2_{L^2} \Tend n \infty \rho, \quad
        E_\Omega(u_n(t_n)) \Tend n \infty I_\Omega(\rho) .
	\]
This shows that $(u_n(t_n))_{n\ge 1}$ is a minimizing sequence for $I_\Omega(\rho)$. By Step~1, we see that up to a subsequence, $u_n(t_n) \rightarrow \phi$ strongly in $\Sigma$ for some $\phi \in \G_\Omega(\rho)$ which contradicts \eqref{eq:orbi-2}. The proof is now complete.
\hfill $\Box$

\subsection{Critical rotational speed}\label{sec:critical}
In this subsection, we study the existence and stability of standing waves related to \eqref{eq:ground} with a critical rotational speed $\Omega =\gamma$  (lowest Landau level). To this end, we first collect some basic properties of the magnetic Sobolev space $H^1_A(\R^2)$ (see \eqref{eq:H1A}) in the following lemma.

\begin{lemma} [\cite{EsLi89, LiLo01}] \label{lem:H1A}
	We have:
	\begin{itemize}
		\item $H^1_A(\R^2)$ is a Hilbert space.
		\item $C^\infty_0(\R^2)$ is dense in $H^1_A(\R^2)$.
		\item $H^1_A(\R^2)$ is continuously embedded in $L^r(\R^2)$ for all $2\le r<\infty$.
		\item $H^1_A(\R^2) \subset H^1_{\loc}(\R^2)$.
		\item Diamagnetic inequality:
		\begin{align} \label{eq:mag-ine}
		|\nabla |f|(x)| \le |\nabla_A f(x)| \quad \text{a.e. } x \in \R^2.
		\end{align}
		\item Magnetic Gagliardo-Nirenberg inequality. For
                  $2\le r<\infty$,
		\begin{align} \label{eq:mag-GN}
		\|f\|^{r}_{L^r} \le C_r \|\nabla_A f\|^{r-2}_{L^2} \|f\|^2_{L^2}, \quad \forall f \in H^1_A(\R^2).
		\end{align}
	\end{itemize}
      \end{lemma}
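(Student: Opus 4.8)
The plan is to reduce each assertion to a classical property of magnetic Sobolev spaces; since the vector potential $A(x)=\gamma(-x_2,x_1)$ is smooth and grows only linearly, nothing beyond the standard theory of \cite{EsLi89,LiLo01} is needed, and the lemma is essentially bookkeeping. I would begin with the \emph{diamagnetic inequality}, which is the engine behind the two embeddings. Writing $\operatorname{sgn}f=f/|f|$ on $\{f\neq0\}$ and $\operatorname{sgn}f=0$ otherwise, one has $\nabla|f|=\RE\bigl(\overline{\operatorname{sgn}f}\,\nabla f\bigr)$ almost everywhere; since $\RE\bigl(\overline{\operatorname{sgn}f}\,(-iAf)\bigr)=\RE(-iA|f|)=0$, this equals $\RE\bigl(\overline{\operatorname{sgn}f}\,\nabla_A f\bigr)$, whence $\bigl|\nabla|f|\bigr|\le|\nabla_A f|$ a.e. The one subtlety is that the computation should first be carried out on a mollification of $f$ and then passed to the limit; this is the Kato-type inequality of \cite[Theorem~7.21]{LiLo01}.

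Granting \eqref{eq:mag-ine}, every $f\in H^1_A(\R^2)$ satisfies $|f|\in H^1(\R^2)$ with $\||f|\|_{H^1}\le\|f\|_{H^1_A}$. The continuous embedding $H^1_A(\R^2)\hookrightarrow L^r(\R^2)$ for $2\le r<\infty$ then follows immediately from the non-magnetic Sobolev embedding in dimension two applied to $|f|$; likewise, applying the two-dimensional Gagliardo--Nirenberg inequality $\|g\|_{L^r}^r\le C_r\|\nabla g\|_{L^2}^{r-2}\|g\|_{L^2}^2$ (valid for $g\in H^1(\R^2)$) to $g=|f|$ and using $\|\nabla|f|\|_{L^2}\le\|\nabla_A f\|_{L^2}$ yields \eqref{eq:mag-GN}. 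Neither item needs anything beyond the diamagnetic bound.

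For the structural statements I would argue directly. The norm $\|\cdot\|_{H^1_A}$ derives from the inner product $\langle f,g\rangle=\int_{\R^2}\bigl(f\bar g+\nabla_A f\cdot\overline{\nabla_A g}\bigr)\,dx$, so for the Hilbert property only completeness is at issue: if $(f_n)$ is Cauchy in $H^1_A$, then $f_n\to f$ in $L^2$ and $\nabla_A f_n\to g$ in $L^2$ for some $f,g$; since $A$ is locally bounded, on each ball $\nabla f_n=\nabla_A f_n+iAf_n\to g+iAf$ in $L^1_{\loc}$, so $\nabla f=g+iAf$ in the sense of distributions, i.e. $\nabla_A f=g\in L^2$ and $f_n\to f$ in $H^1_A$. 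The same identity $\nabla f=\nabla_A f+iAf$, valid on every ball because $A$ is bounded there, gives $H^1_A(\R^2)\subset H^1_{\loc}(\R^2)$.

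Finally, for the density of $C^\infty_0(\R^2)$ I would use a standard truncation--mollification argument. Given $f\in H^1_A$ and $\chi\in C^\infty_0(\R^2)$ with $\chi\equiv1$ near the origin, set $\chi_R=\chi(\cdot/R)$; then $\nabla_A(\chi_R f)=\chi_R\nabla_A f+f\nabla\chi_R$, and since $\|f\nabla\chi_R\|_{L^2}\lesssim R^{-1}\|f\|_{L^2}\to0$, we get $\chi_R f\to f$ in $H^1_A$ as $R\to\infty$. This reduces matters to the case where $f$ has compact support; on the (fixed) support $A$ is bounded, so $f\in H^1$, and the usual mollifications $f*\eta_\varepsilon\in C^\infty_0$ converge to $f$ in $H^1$, hence in $H^1_A$. (Alternatively one may simply cite \cite[Theorem~7.22]{LiLo01}.) The main --- indeed the only --- obstacle worth flagging is this last truncation/mollification step, which is routine precisely because $A$ is locally bounded; everything else is automatic once the diamagnetic inequality is in place.
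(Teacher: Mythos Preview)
Your sketch is correct; each item is reduced to the standard mechanism (the diamagnetic inequality as the engine for the embeddings and for \eqref{eq:mag-GN}, local boundedness of $A$ for the structural statements, and truncation--mollification for density), and nothing you wrote would fail for $A(x)=\gamma(-x_2,x_1)$.

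There is nothing to compare, however: the paper does not prove Lemma~\ref{lem:H1A} at all.  It is stated with attribution to \cite{EsLi89,LiLo01} and used as a black box; the only place where the paper supplies an argument in this circle of ideas is the \emph{next} lemma (Lemma~\ref{lem:GNA}), which concerns the sharp constant in \eqref{eq:mag-GN} and is a separate statement.  So your write-up is strictly more than what the paper offers here --- a self-contained sketch of results the authors simply quote.  If anything, you could shorten it to a one-line citation, as they do; conversely, your version has the advantage of making explicit that the whole lemma hinges on (i) the diamagnetic inequality and (ii) $A\in L^\infty_{\loc}$, which is useful orientation for the reader.
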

\begin{lemma}\label{lem:GNA}
The best constant in \eqref{eq:mag-GN} is the same as the optimal constant in Gagliardo-Nirenberg inequality for the case $A=0$. In particular, when $r=4$, the best constant in \eqref{eq:mag-GN} can be taken as
      \[
      C_4 = \|Q\|_{L^2}^{-2},
      \]
where $Q$ is the cubic (positive, radial) ground state given by \eqref{eq:Q}. Moreover, the equality in \eqref{eq:mag-GN} cannot be attained.
\end{lemma}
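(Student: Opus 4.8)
The plan is to prove the equality of optimal constants via two one-sided bounds, and then to turn the resulting chain of (in)equalities into a topological obstruction forbidding extremals. Write $\mathcal J_A(f):=\|\nabla_A f\|_{L^2}^{r-2}\|f\|_{L^2}^{2}\big/\|f\|_{L^r}^{r}$ for $f\in H^1_A(\R^2)\setminus\{0\}$, and let $\mathcal J_0$ be the analogous quotient with $A=0$ on $H^1(\R^2)\setminus\{0\}$, so that the optimal (smallest admissible) constant in \eqref{eq:mag-GN} is $C_r=1/\inf\mathcal J_A$ and the optimal non-magnetic constant is $C_r^{0}:=1/\inf\mathcal J_0$. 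For the bound $C_r\le C_r^{0}$: given $f\in H^1_A(\R^2)$, the diamagnetic inequality \eqref{eq:mag-ine} gives $|f|\in H^1(\R^2)$ with $\|\nabla|f|\|_{L^2}\le\|\nabla_A f\|_{L^2}$ and $\||f|\|_{L^q}=\|f\|_{L^q}$, so applying the non-magnetic Gagliardo--Nirenberg inequality to $|f|$ yields \eqref{eq:mag-GN} with constant $C_r^{0}$, i.e. $\inf\mathcal J_A\ge\inf\mathcal J_0$.

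For the reverse bound I would test $\mathcal J_A$ against concentrating profiles. Fix a real-valued near-optimizer $u$ of $\mathcal J_0$ with $\|Au\|_{L^2}<\infty$ (e.g. $u\in C^\infty_0(\R^2)$, or $u=Q$ when $r=4$), and set $f_\lambda(x):=u(\lambda x)$. Since $A$ is linear, $A(y/\lambda)=\lambda^{-1}A(y)$, and since $u,A$ are real the cross term in $\|\nabla_A f_\lambda\|_{L^2}^2$ vanishes; a change of variables then gives
\[
\|\nabla_A f_\lambda\|_{L^2}^2=\|\nabla u\|_{L^2}^2+\lambda^{-4}\|Au\|_{L^2}^2,\qquad \|f_\lambda\|_{L^q}^q=\lambda^{-2}\|u\|_{L^q}^q ,
\]
whence $\mathcal J_A(f_\lambda)\to\mathcal J_0(u)$ as $\lambda\to\infty$; thus $\inf\mathcal J_A\le\mathcal J_0(u)$, and letting $u$ approach an optimizer yields $\inf\mathcal J_A\le\inf\mathcal J_0$. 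Together with the previous step this gives $C_r=C_r^{0}$. For $r=4$, testing \eqref{eq:Q} against $Q$ and against $x\cdot\nabla Q$ produces Weinstein's identities $\|\nabla Q\|_{L^2}^2=\|Q\|_{L^4}^4=2\|Q\|_{L^2}^2$, hence $\mathcal J_0(Q)=\|Q\|_{L^2}^2$ and $C_4=\|Q\|_{L^2}^{-2}$.

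To see that \eqref{eq:mag-GN} admits no extremal, suppose $f\in H^1_A(\R^2)\setminus\{0\}$ realizes equality. Equality then propagates through the first step in two ways: on the one hand $\mathcal J_0(|f|)=\inf\mathcal J_0$, so by the classical characterization of non-magnetic Gagliardo--Nirenberg extremals $|f|$ equals, up to scaling and translation, the positive ground state, and in particular $|f|$ is smooth and strictly positive on $\R^2$; on the other hand $\|\nabla|f|\|_{L^2}=\|\nabla_A f\|_{L^2}$, so \eqref{eq:mag-ine} is an equality a.e. Writing $f=|f|\,e^{i\theta}$ with $\theta\in H^1_{\loc}(\R^2;\R)$ (a global lift exists since $\R^2$ is simply connected and $|f|>0$), one computes $|\nabla_A f|^2=\big|\nabla|f|\big|^2+|f|^2|\nabla\theta-A|^2$ a.e., and saturation of \eqref{eq:mag-ine} forces $\nabla\theta=A$ a.e. This is impossible, since then $A$ would be a gradient while $\operatorname{curl}A=\partial_{x_1}A_2-\partial_{x_2}A_1=2\gamma\neq0$.

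The changes of variables above and the pointwise identity in the last step are routine. The only genuinely delicate point is the appeal to the characterization of non-magnetic Gagliardo--Nirenberg extremals: it is exactly this that guarantees $|f|>0$ everywhere, so that the phase $\theta$ is globally well defined, thereby converting the saturated diamagnetic inequality into the impossible identity $A=\nabla\theta$ and closing the argument via $\operatorname{curl}A\neq0$.
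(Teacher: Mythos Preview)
Your proof is correct and follows essentially the same strategy as the paper's: both directions of the equality of constants rest on the diamagnetic inequality together with a scaling argument (you dilate the test function and send $\lambda\to\infty$, the paper dilates the potential via $A^\sigma(x)=\sigma A(\sigma x)$ and sends $\sigma\to 0$ --- these are dual under the change of variables $y=\sigma x$), and non-attainment is reduced to the contradiction $\operatorname{curl}A=0$ via saturation of the diamagnetic inequality. Your treatment of the last step is in fact a bit more careful than the paper's: you explicitly invoke the characterization of non-magnetic Gagliardo--Nirenberg extremals to ensure $|f|>0$ everywhere before lifting the phase globally, whereas the paper writes $A=\IM(\nabla\phi/\phi)$ and takes its curl without addressing where $\phi$ might vanish or the regularity needed to differentiate.
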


\begin{proof}
We recall the standard Gagliardo-Nirenberg inequality
\begin{equation} \label{eq:GN}
\|f\|_{L^r}^r = \| \lvert f\rvert\|_{L^r}^r\le C_r^{\rm GN} \|f\|_{L^2}^2 \|\nabla |f|\|_{L^2}^{r-2},
\end{equation}
where $ C_r^{\rm GN}$ stands for the best constant in the ``regular'' Gagliardo-Nirenberg inequality. This best constant was first
characterized in \cite{We83}, in terms of  a suitable ground state solution of some elliptic equation. In particular, $C_4^{\rm GN}$ is
given by
\[
C_4^{\rm GN}=\|Q\|_{L^2}^{-2},
\]
where $Q$ is the (only) positive, radially symmetric solution to \eqref{eq:Q} (recall that unlike in \cite{We83},  there is a $\tfrac{1}{2}$ factor in front of the Laplacian in \eqref{eq:Q}, hence  a slightly different formula). As a consequence of the diamagnetic inequality \eqref{eq:mag-ine},
we infer
\begin{equation*}
\|f\|_{L^r}^r\le C_r^{\rm GN}
\|f\|_{L^2}^2 \|\nabla_A f\|_{L^2}^{r-2}.
\end{equation*}
This shows that \eqref{eq:mag-GN} holds and $C_r \le C_r^{\rm GN}$. To see $C_r^{\rm GN} \le C_r$, we follow the argument of \cite[Proposition 3.1]{BHIM}. Note that the argument given \cite{BHIM} does not hold in two dimensions. Denote $A^\sigma(x):= \sigma A(\sigma x)$ with $\sigma>0$. By \eqref{eq:mag-GN}, we have for any smooth compactly supported function $f$,
\[
\|f\|^r_{L^r} \le C_r \|\nabla_{A^\sigma} f\|^{r-2}_{L^2} \|f\|^2_{L^2}, \quad \forall \sigma>0.
\]
On the other hand, by the triangle inequality, we have
\[
\|\nabla_{A^\sigma} f\|_{L^2} \le \|\nabla f\|_{L^2} + \|A^\sigma f\|_{L^2}.
\]
We estimate
\begin{align*}
\int_{\R^2} |A^\sigma (x)f(x)|^2 dx &\le \left(\int_{|x|\le R} |A^\sigma(x)|^2 dx\right)  \|f\|_{L^\infty} \\
&= \left(\int_{|x| \le R\sigma} |A(x)|^2 dx\right) \|f\|_{L^\infty} \rightarrow 0 \quad \text{ as } \sigma \rightarrow 0,
\end{align*}
where $R>0$ is such that $\supp(f)\subset \{ x\in \R^2 \ : \ |x| \le R\}$. This shows that
\[
\|f\|^r_{L^r} \le C_r \|\nabla f\|^{r-2}_{L^2} \|f\|^2_{L^2},
\]
hence $C_r^{\rm GN} \le C_r$.

Finally we show that the equality in \eqref{eq:mag-GN} cannot be achieved. Assume by contradiction that \eqref{eq:mag-GN} is attained by some function $\phi$. By \eqref{eq:mag-ine} and \eqref{eq:GN}, we have
\begin{align*}
\|\phi\|^r_{L^r} = C_r \|\nabla_A \phi\|^{r-2}_{L^2} \|\phi\|^2_{L^2} \ge C_r^{\rm GN} \|\nabla |\phi|\|^{r-2}_{L^2} \|\phi\|^2_{L^2}  \ge \|\phi\|^r_{L^r}
\end{align*}
which implies that
\[
\|\nabla_A \phi\|_{L^2} = \|\nabla|\phi|\|_{L^2}.
\]
This together with the following estimate of \cite[Theorem 7.21]{LiLo01}:
\[
|\nabla |\phi|| = \left| \RE \left( \nabla \phi \frac{\overline{\phi}}{|\phi|} \right)\right| = \left|\RE \left( (\nabla - i A) \phi \frac{\overline{\phi}}{|\phi|} \right) \right| \le |(\nabla -iA)\phi|
\]
imply that
\[
\IM \left((\nabla - i A) \phi \frac{\overline{\phi}}{|\phi|} \right) =0 \Longleftrightarrow A= \IM\left( \frac{\nabla \phi}{\phi}\right).
\]
In particular, $B_{12}(x):=\partial_1 A_2(x) - \partial_2 A_1(x) =0$, where $A(x) = (A_1(x), A_2(x))=\gamma(-x_2,x_1)$. This is a contradiction since $B_{12}(x)=2$. The proof is complete.
\end{proof}

Note that in view of \cite[Lemma 1]{GS} (recall again the $\tfrac{1}{2}$ factor \eqref{eq:Q}),
	\begin{equation}\label{eq:estQ}
	\pi \le \|Q\|^2_{L^2} \le  \pi \ln 2.
	\end{equation}
We also have the following useful remark.
\begin{remark} \label{rem:H1A}
	Let $(f_n)_{n\ge 1}$ be a bounded sequence in $H^1_A(\R^2)$. Then up to a subsequence, $f_n \rightharpoonup \phi$ weakly in $H^1_A(\R^2)$, $f_n \rightarrow \phi$ a.e. in $\R^2$ and $f_n \rightarrow \phi$ strongly in $L^r_{\loc}(\R^2)$ for all $2\le r<\infty$. Moreover,
	\[
	\|\nabla_A f_n\|^2_{L^2} = \|\nabla_A \phi\|^2_{L^2} + \|\nabla_A(f_n-\phi)\|^2_{L^2} + o_n(1).
	\]
	%In fact, let $f_n \rightarrow \phi$ weakly in $H^1_A(\R^2)$. Set $g_n:= f_n -\phi$. We see that $g_n \rightarrow 0$ weakly in $H^1_A(\R^2)$. We compute
	%\[
	 %\|\nabla_A f_n\|^2_{L^2} = \|\nabla_A (f+g_n)\|^2_{L^2} = \|\nabla_A f\|^2_{L^2} + \|\nabla_A g_n\|^2_{L^2} + 2 \RE \int_{\R^2} \nabla_A \bar{f} \nabla_A g_n dx.
	%\]
	%Let $\epsilon>0$. Since $C^\infty_0(\R^2)$ is dense in $H^1_A(\R^2)$, we take $\varphi \in C^\infty_0(\R^2)$ so that %$\|\nabla_A(\bar{f}-\varphi)\|_{L^2} <\epsilon/2$. Since $g_n \rightarrow 0$ weakly in $H^1_A(\R^2)$, we see that
	%\[
	%\left|\int_{\R^2} \nabla_A \varphi \nabla_A g_n dx\right| \rightarrow 0
	%\]
	%as $n\rightarrow \infty$. Thus, there exists $n_0 \in \N$ such that for $n\ge n_0$,
	%\begin{align*}
	%\left|\int_{\R^2} \nabla_A \bar{f} \nabla_A g_n dx\right| &\le \left|\int_{\R^N} \nabla_A (\bar{f}-\varphi) \nabla_A g_n dx\right| + 	\left|\int_{\R^2} \nabla_A \varphi \nabla_A g_n dx\right| \\
	%&\le \|\nabla_A(\bar{f}-\varphi)\|_{L^2} \|\nabla_A g_n\|_{L^2} + \epsilon/2 \\
	%&<\epsilon.
	%\end{align*}
\end{remark}

We have the following conditional result on the existence and stability of prescribed mass standing waves to \eqref{eq:nls}.
\begin{proposition} \label{prop:orbi-cri-1}
	Let $K_3=V_0 = 0, \gamma>0$ and $\Omega=\gamma$. Let $0<\rho\le \|Q\|_{L^2}^2$ and assume that $I^0_\gamma(\rho)<0$. Then there exists $\phi \in H^1_A(\R^2)$ such that $E^0_\gamma(\phi) = I^0_\gamma(\rho)$ and $\|\phi\|^2_{L^2} =\rho$. %See also \cite[Lemma 2.6]{ArSz} A semilinear Schrodinger equation in the presence of a magnetic field.
	Moreover, the set
	\[
	\G^0_\gamma(\rho) := \left\{ \phi \in H^1_A(\R^2) \ : \ E^0_\gamma(\phi)= I^0_\gamma(\rho), \|\phi\|^2_{L^2} =\rho \right\}
	\]
	is orbitally stable under the flow of \eqref{eq:nls} in the sense that for any $\epsilon>0$, there exists $\delta>0$ such that for any initial data $u_0 \in H^1_A(\R^2)$ satisfying
	\[
	\inf_{\phi \in \G^0_\gamma(\rho)} \|u_0 - \phi\|_{H^1_A} <\delta,
	\]
	the corresponding solution to \eqref{eq:nls} exists globally in time and satisfies
	\[
	\inf_{\phi \in \G^0_\gamma(\rho)} \|u(t)-\phi\|_{H^1_A} <\epsilon
	\]
	for all $t\in \R$.
\end{proposition}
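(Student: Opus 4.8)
\noindent\textit{Proof proposal.}
The plan is to solve the constrained minimization problem defining $I^0_\gamma(\rho)$ by a concentration--compactness argument in the magnetic Sobolev space, and then to deduce orbital stability by the standard Cazenave--Lions contradiction scheme. First, splitting the logarithmic term at $\{|f|^2=\sqrt e\}$ and using $\ln(1/\lambda)<1/\lambda$ for $0<\lambda<1$ exactly as in \eqref{eq:est-E}, one obtains $E^0_\gamma(f)\ge\frac12\|\nabla_A f\|_{L^2}^2-\frac{\sqrt e}{2}\|f\|_{L^2}^2$, so that $-\frac{\sqrt e}{2}\rho\le I^0_\gamma(\rho)<0$ (the upper bound being the hypothesis) and, on the constraint $\|f\|_{L^2}^2=\rho$, one has $\|\nabla_A f\|_{L^2}^2\le 2E^0_\gamma(f)+\sqrt e\,\rho$; hence every minimizing sequence $(f_n)$ is bounded in $H^1_A(\R^2)$.

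\medskip

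\noindent Next I would apply the magnetic concentration--compactness principle (Lemma~\ref{lem:conc-comp}) to $|f_n|^2\,dx$. \emph{Vanishing} is excluded because, by the diamagnetic inequality \eqref{eq:mag-ine}, $(|f_n|)$ is bounded in $H^1$, so vanishing would force $f_n\to0$ in $L^r(\R^2)$ for all $2<r<\infty$, hence $\int_{\R^2}|f_n|^4\ln(|f_n|^2/\sqrt e)\,dx\to0$ by \eqref{eq:est-log} and $\liminf_n E^0_\gamma(f_n)\ge0$, contradicting $I^0_\gamma(\rho)<0$. \emph{Dichotomy} is excluded by the strict binding inequality
\begin{equation}\label{eq:strict-sub}
I^0_\gamma(\rho)<I^0_\gamma(\alpha)+I^0_\gamma(\rho-\alpha),\qquad 0<\alpha<\rho,
\end{equation}
and this is where the real difficulty lies: since the magnetic gradient possesses an intrinsic length scale, the mass-dilation argument used in \cite{CaSp-p} to obtain strict sub-additivity is unavailable (which is also why $I^0_\gamma(\rho)<0$ has to be verified separately by a Gaussian trial function, cf.\ Lemma~\ref{lem:data}), so \eqref{eq:strict-sub} must be derived from the sign condition $I^0_\gamma(\rho)<0$ together with the mass restriction $\rho\le\|Q\|_{L^2}^2$, the latter entering through the sharp magnetic Gagliardo--Nirenberg inequality of Lemma~\ref{lem:GNA} and the bounds \eqref{eq:estQ}. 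Once vanishing and dichotomy are out, the compactness alternative yields $y_n\in\R^2$ such that the magnetic translates $g_n(x):=e^{iA(y_n)\cdot x}f_n(x-y_n)$, for which $\nabla_A g_n(x)=e^{iA(y_n)\cdot x}(\nabla_A f_n)(x-y_n)$ so that $\|\nabla_A\cdot\|_{L^2}$, all $L^r$ norms, $E^0_\gamma$ and the mass are left unchanged, satisfy $g_n\rightharpoonup\phi$ in $H^1_A(\R^2)$, $g_n\to\phi$ a.e.\ and in $L^r_{\loc}$, with $\phi\ne0$.

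\medskip

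\noindent To identify $\phi$ as a minimizer, I would combine the magnetic-gradient Brezis--Lieb splitting of Remark~\ref{rem:H1A} with the usual Brezis--Lieb lemma for $L^p$ norms and for the logarithmic nonlinearity (the latter following from a.e.\ convergence and the pointwise bound \eqref{est-fg}), obtaining $\rho=\|\phi\|_{L^2}^2+\lim_n\|g_n-\phi\|_{L^2}^2$ and $I^0_\gamma(\rho)=E^0_\gamma(\phi)+\lim_n E^0_\gamma(g_n-\phi)$. If $\mu:=\|\phi\|_{L^2}^2$ satisfied $0<\mu<\rho$, then $E^0_\gamma(\phi)\ge I^0_\gamma(\mu)$ and $\lim_n E^0_\gamma(g_n-\phi)\ge I^0_\gamma(\rho-\mu)$ would contradict \eqref{eq:strict-sub}; since $\phi\ne0$, this forces $\|\phi\|_{L^2}^2=\rho$, hence $g_n\to\phi$ in $L^2$ and, by \eqref{eq:mag-GN} and the $H^1_A$-bound, in every $L^r$, $2\le r<\infty$. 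The logarithmic term then converges, and passing to the limit in $E^0_\gamma(g_n)$ together with weak lower semicontinuity of $\|\nabla_A\cdot\|_{L^2}$ yields $\|\nabla_A(g_n-\phi)\|_{L^2}\to0$, so $g_n\to\phi$ in $H^1_A$ and $E^0_\gamma(\phi)=I^0_\gamma(\rho)$; in particular $\G^0_\gamma(\rho)\ne\emptyset$, and every minimizing sequence converges in $H^1_A$, after extraction and a magnetic translation, to an element of $\G^0_\gamma(\rho)$.

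\medskip

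\noindent For orbital stability, observe first that since $V_0=0$ and $\Omega=\gamma$ the conserved energy $E_\gamma$ of Theorem~\ref{theo:GWP} coincides with $E^0_\gamma$, which controls $\|\cdot\|_{H^1_A}$ jointly with the mass by the coercivity established in the first paragraph; together with the density of $C_0^\infty(\R^2)\subset\Sigma$ in $H^1_A(\R^2)$ and the well-posedness of Lemma~\ref{lem:lwp}, this promotes \eqref{eq:nls} to a globally well-posed flow on $H^1_A(\R^2)$ conserving mass and energy. Arguing by contradiction, if stability failed there would be $\epsilon_0>0$, $\phi_0\in\G^0_\gamma(\rho)$, data $u_{0,n}\to\phi_0$ in $H^1_A$, and times $t_n$ with $\inf_{\psi\in\G^0_\gamma(\rho)}\|u_n(t_n)-\psi\|_{H^1_A}\ge\epsilon_0$; conservation of mass and energy makes $(u_n(t_n))_n$ a minimizing sequence for $I^0_\gamma(\rho)$, so by the previous step there exist $y_n$ with $e^{iA(y_n)\cdot x}u_n(t_n)(x-y_n)\to\phi$ in $H^1_A$ for some $\phi\in\G^0_\gamma(\rho)$, and since magnetic translations are $H^1_A$-isometries preserving $E^0_\gamma$ and the mass, $\psi_n(x):=e^{-iA(y_n)\cdot x}\phi(x+y_n)$ belongs to $\G^0_\gamma(\rho)$ with $\|u_n(t_n)-\psi_n\|_{H^1_A}\to0$, a contradiction. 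Throughout, the decisive step is the strict binding inequality \eqref{eq:strict-sub}; everything else is a careful but routine implementation of the variational and Cazenave--Lions machinery in the magnetic framework.
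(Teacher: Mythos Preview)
Your overall architecture is the same as the paper's: coercivity $\Rightarrow$ bounded minimizing sequences, concentration--compactness in $H^1_A$, then Cazenave--Lions for stability. The treatment of vanishing, of the compactness alternative via magnetic translations, and of orbital stability is essentially what the paper does.

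The genuine gap is exactly where you flag it: you \emph{assert} the strict binding inequality \eqref{eq:strict-sub} and name the ingredients (the sign $I^0_\gamma(\rho)<0$, the constraint $\rho\le\|Q\|_{L^2}^2$, Lemma~\ref{lem:GNA}), but you give no mechanism linking them. Because the logarithmic nonlinearity destroys homogeneity and the magnetic gradient has a fixed length scale, none of the usual scaling proofs of subadditivity go through, and it is not at all obvious how to turn your list of ingredients into \eqref{eq:strict-sub}. The paper does \emph{not} prove an abstract subadditivity inequality; instead it rules out dichotomy by a direct computation on the split sequences $g_n,h_n$. The key identity is the amplitude scaling
\[
E^0_\gamma(\lambda f)=\lambda^4 E^0_\gamma(f)-\tfrac{\lambda^4-\lambda^2}{2}\|\nabla_A f\|_{L^2}^2+\tfrac{\lambda^4\ln(\lambda^2)}{2}\|f\|_{L^4}^4,
\]
applied with $\lambda_n=\sqrt{\rho}/\|g_n\|_{L^2}$ and $\mu_n=\sqrt{\rho}/\|h_n\|_{L^2}$ so that $\lambda_n g_n,\mu_n h_n$ are admissible for $I^0_\gamma(\rho)$. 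One then replaces $\|\nabla_A g_n\|_{L^2}^2$ by its lower bound $\|g_n\|_{L^4}^4/(C_4\|g_n\|_{L^2}^2)$ from \eqref{eq:mag-GN}; since $\lambda_n>1$ this is the correct direction, and after passing to the limit one arrives at
\[
I^0_\gamma(\rho)\ge\frac{a^2+(\rho-a)^2}{\rho^2}\,I^0_\gamma(\rho)+\tfrac12\min\{K_1,K_2\}\liminf_n\|f_n\|_{L^4}^4,
\]
with $K_j=\frac{1}{C_4\rho}(\text{positive})-\ln(\text{something}>1)$. It is precisely here that $\rho\le\|Q\|_{L^2}^2$ (i.e.\ $C_4\rho\le1$) is used, to make $K_1,K_2>0$ via $z-1\ge\ln z$. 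Combined with the preliminary observation that $I^0_\gamma(\rho)<0$ forces $\liminf_n\|f_n\|_{L^4}^4\ge\delta>0$, the displayed inequality gives $\bigl(1-\tfrac{a^2+(\rho-a)^2}{\rho^2}\bigr)I^0_\gamma(\rho)>0$, hence $I^0_\gamma(\rho)>0$, the desired contradiction. Without this explicit chain (or an equivalent one yielding \eqref{eq:strict-sub}), your dichotomy step is incomplete.

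A minor point: once dichotomy is excluded, the compactness alternative of Lemma~\ref{lem:conc-comp} already gives strong $L^r$ convergence of the magnetic translates for all $2\le r<\infty$, so $\|\phi\|_{L^2}^2=\rho$ is immediate and your second invocation of \eqref{eq:strict-sub} via Br\'ezis--Lieb is unnecessary.
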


\begin{remark}
	In the case $A\equiv 0$ and $\gamma=0$, it was shown in \cite{CaSp-p} that $I(\rho) := I^0_0(\rho)<0$ for any $\rho>0$. This is done by using the scaling $f_\lambda(x) := \lambda f(\lambda x)$ and taking $\lambda>0$ sufficiently small. In our case, showing $I^0_\gamma(\rho)<0$ is more complicated. The above scaling argument does not work due to the presence of magnetic potential $A$.
\end{remark}

\begin{remark}
	The assumption $\rho \le \|Q\|_{L^2}^2$ is probably only technical, due to
        our argument (see after \eqref{eq:cond-rho}).
\end{remark}

Before giving the proof of Proposition \ref{prop:orbi-cri-1}, let us recall the following version of concentration-compactness lemma.

\begin{lemma}[Concentration-compactness lemma \cite{Li84-1,Li84-2}] \label{lem:conc-comp}
	Let $(f_n)_{n\ge 1}$ be a bounded sequence in $H^1_A(\R^2)$ satisfying
	\[
	\|f_n\|^2_{L^2} =\rho
	\]
	for all $n\ge 1$ with some fixed constant $\rho>0$. Then there
        exists a subsequence -- still denoted by $(f_n)_{n\ge 1}$ -- satisfying one of the following three possibilities:
	\begin{itemize}
		\item {\bf Vanishing.}
		$\displaystyle
		\lim_{n\rightarrow \infty} \sup_{y\in \R^2}
                \int_{B(y,R)} |f_n(x)|^2 dx =0\quad\text{for all }R>0.$
		\item {\bf Dichotomy.} There exist $a \in (0,\rho)$ and sequences $(g_n)_{n\ge 1}$, $(h_n)_{n\ge 1}$ bounded in $H^1_A(\R^2)$ such that
		\begin{align} \label{eq:dicho}
		\left\{
		\renewcommand{\arraystretch}{1.2}
		\begin{array}{l}
		\|f_n - g_n-h_n\|_{L^r} \rightarrow 0 \text{ as } n\rightarrow \infty \text{ for all } 2\le r<\infty, \\
		\|g_n\|^2_{L^2} \rightarrow a, \quad \|h_n\|^2_{L^2}\rightarrow \rho-a \text{ as } k \rightarrow \infty,\\
		\dist(\supp(g_n), \supp(h_n)) \rightarrow \infty \text{ as } n\rightarrow \infty, \\
		\liminf_{n\rightarrow \infty} \|\nabla_A f_n\|^2_{L^2} - \|\nabla_A g_n\|^2_{L^2} - \|\nabla_A h_n\|^2_{L^2} \ge 0.
		\end{array}
		\right.
		\end{align}
		\item {\bf Compactness.} There exists a sequence $(y_n)_{n\ge 1} \subset \R^2$ such that for all $\epsilon>0$, there exists $R(\epsilon)>0$ such that for all $k\ge 1$,
		\[
		\int_{B(y_n, R(\epsilon))} |f_n(x)|^2 dx \ge \rho-\epsilon.
		\]
	\end{itemize}
\end{lemma}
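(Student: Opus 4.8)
The plan is to reproduce the classical concentration–compactness argument of Lions, the only adjustment being that every cutoff manipulation is carried out with the magnetic gradient $\nabla_A$ rather than $\nabla$. This is harmless: since $A$ is real-valued, for any real $\chi\in C^\infty(\R^2)$ with bounded gradient and any $f\in H^1_A(\R^2)$ the Leibniz rule
\[
\nabla_A(\chi f)=\chi\,\nabla_A f+(\nabla\chi)\,f
\]
holds without additional terms, so $\chi f\in H^1_A(\R^2)$ with $\|\nabla_A(\chi f)\|_{L^2}\le\|\nabla_A f\|_{L^2}+\|\nabla\chi\|_{L^\infty}\|f\|_{L^2}$; moreover if $\chi_1,\chi_2$ have disjoint supports, then $\nabla_A(\chi_1 f)$ and $\nabla_A(\chi_2 f)$ are orthogonal in $L^2$ up to the gradient-of-cutoff errors. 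Together with the magnetic Gagliardo–Nirenberg inequality and the embedding $H^1_A(\R^2)\hookrightarrow L^r(\R^2)$ from Lemma~\ref{lem:H1A}, this is all the functional-analytic input needed.

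First I would introduce the Lévy concentration functions $Q_n(R):=\sup_{y\in\R^2}\int_{B(y,R)}|f_n(x)|^2\,dx$. Each $Q_n$ is nondecreasing on $[0,\infty)$, with $Q_n(0)=0$ and $Q_n(R)\to\rho$ as $R\to\infty$. By Helly's selection theorem, after passing to a subsequence we may assume $Q_n(R)\to Q(R)$ for all $R\ge0$, with $Q$ nondecreasing; set $\alpha:=\lim_{R\to\infty}Q(R)\in[0,\rho]$. If $\alpha=0$, then $Q_n(R)\to0$ for every $R$, which is precisely vanishing. If $\alpha=\rho$, I argue as in Lions: for each $k$ choose $R_k$ with $Q(R_k)>\rho-1/k$, and for $n$ large choose $y_n^{(k)}$ with $\int_{B(y_n^{(k)},R_k)}|f_n|^2>\rho-1/k$; since two such balls (for indices with $1/k,1/l$ small) must overlap — otherwise the total $L^2$-mass would exceed $\rho$ — the centers $y_n^{(k)}$ stay within a $k$-dependent distance of a fixed center $y_n$, which, after enlarging the radii in a way depending only on $\epsilon$ and modifying the finitely many small indices $n$, yields $R(\epsilon)>0$ with $\int_{B(y_n,R(\epsilon))}|f_n|^2\ge\rho-\epsilon$, i.e.\ compactness.

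The bulk of the work is the dichotomy case $0<\alpha<\rho$. Fix $\epsilon>0$ and pick $R=R(\epsilon)$ with $Q(R)>\alpha-\epsilon$; then for $n$ large there are centers $y_n$ with $\alpha-\epsilon<\int_{B(y_n,R)}|f_n|^2\le Q_n(R)<\alpha+\epsilon$. Since $Q_n(2S)\to Q(2S)\le\alpha$ for every fixed $S$, the mass of $f_n$ in the annulus $\{R<|x-y_n|<2S\}$ is eventually $<2\epsilon$; a diagonal extraction then gives $S_n\to\infty$ with $\int_{\{R<|x-y_n|<2S_n\}}|f_n|^2<2\epsilon$ for $n$ large. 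Choose $\xi,\zeta\in C^\infty(\R^2;[0,1])$ with $\xi\equiv1$ on $B(0,1)$, $\supp\xi\subset B(0,2)$, $\zeta\equiv0$ on $B(0,1)$, $\zeta\equiv1$ outside $B(0,2)$, and set $\xi_n(x):=\xi\big((x-y_n)/R\big)$, $\zeta_n(x):=\zeta\big((x-y_n)/S_n\big)$, $g_n:=\xi_n f_n$, $h_n:=\zeta_n f_n$. For $n$ large $2R<S_n$, so $\xi_n$ and $\zeta_n$ have disjoint supports, hence $\dist(\supp g_n,\supp h_n)\ge S_n-2R\to\infty$ and $0\le\xi_n^2+\zeta_n^2\le1$; the mass bounds above give $\|g_n\|_{L^2}^2,\|h_n\|_{L^2}^2$ within $O(\epsilon)$ of $\alpha$ and $\rho-\alpha$. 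Since $1-\xi_n-\zeta_n$ is supported in $\{R<|x-y_n|<2S_n\}$ and bounded by $1$, we get $\|f_n-g_n-h_n\|_{L^2}^2<2\epsilon$, while the Leibniz rule (with cutoff gradients of size $O(1/R)+O(1/S_n)$) shows $f_n-g_n-h_n$ is bounded in $H^1_A(\R^2)$; by the magnetic Gagliardo–Nirenberg inequality it is then bounded in every $L^p$, $p<\infty$, so interpolation makes it $o(1)$ in $L^r$ for all $2\le r<\infty$. Finally, $\nabla_A g_n=\xi_n\nabla_A f_n+(\nabla\xi_n)f_n$ and $\nabla_A h_n=\zeta_n\nabla_A f_n+(\nabla\zeta_n)f_n$; expanding the squares, using that $\nabla\xi_n,\nabla\zeta_n$ are supported in the small-mass annuli so that $\|(\nabla\xi_n)f_n\|_{L^2}+\|(\nabla\zeta_n)f_n\|_{L^2}=O(\sqrt\epsilon)$, and bounding the cross terms by Cauchy–Schwarz, gives $\|\nabla_A g_n\|_{L^2}^2+\|\nabla_A h_n\|_{L^2}^2\le\|\nabla_A f_n\|_{L^2}^2+O(\sqrt\epsilon)$. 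Running this for $\epsilon=1/j\to0$ and taking a diagonal subsequence in $n$ yields $a:=\alpha\in(0,\rho)$ and sequences $(g_n),(h_n)$ bounded in $H^1_A(\R^2)$ satisfying all four requirements of \eqref{eq:dicho}.

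I expect the only genuinely delicate point to be the radius selection in the dichotomy: arranging that, simultaneously, the core ball $B(y_n,R)$ carries $L^2$-mass close to $\alpha$ and the intermediate annulus $\{R<|x-y_n|<2S_n\}$ carries mass $<2\epsilon$ with $S_n\to\infty$. This is exactly what forces the two-step structure — first fix $\epsilon$, then diagonalize over $\epsilon\to0$. By contrast, the passage to the magnetic setting is essentially cosmetic, since the Leibniz rule for $\nabla_A$ preserves the orthogonality and localization structure exploited in the Euclidean proof, and the needed magnetic Sobolev and Gagliardo–Nirenberg estimates are already recorded in Lemma~\ref{lem:H1A}.
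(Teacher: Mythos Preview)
Your proposal is correct and follows essentially the same route as the paper: both invoke Lions' classical concentration--compactness scheme, with the sole adaptation being the Leibniz rule $\nabla_A(\chi f)=\chi\nabla_A f+(\nabla\chi)f$ for real cutoffs, which is exactly what underlies the paper's one displayed estimate $\|\nabla_A(\varphi_R f_n)\|_{L^2}^2-\|\varphi_R\nabla_A f_n\|_{L^2}^2\le CMR^{-1}$. The paper simply cites Lions and Cazenave for the standard part and records this cutoff estimate, whereas you have written out the L\'evy-function/Helly/diagonal-extraction argument in full; the content is the same.
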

The proof of this result is very similar to that of \cite{Li84-1,
  Li84-2}, see also \cite[Proposition~1.7.6]{CazCourant}. The only difference is the last inequality in \eqref{eq:dicho} which is proved by using the fact that
\[
\|\nabla_A(\varphi_R f_n)\|^2_{L^2} - \|\varphi_R \nabla_A f_n\|^2_{L^2} \le C M R^{-1},
\]
where $\varphi_R(x) = \varphi(x/R)$ with a suitable function $\varphi \in C^\infty(\R^2)$ and $M:= \sup_{n\ge 1} \|f_n\|^2_{H^1_A}$.

\begin{remark}
	\begin{itemize}
		\item If  vanishing occurs, then we infer that $f_n \rightarrow 0$ strongly in $L^r(\R^2)$ for all $2<r<\infty$. Indeed, by the magnetic inequality \eqref{eq:mag-ine}, we see that $|f_n|$ is a bounded sequence in $H^1(\R^2)$. The result follows by applying \cite[Lemma 1.1]{Li84-2} to $(|f_n|)_{n\ge 1}$.
		\item If  compactness occurs, then $(f_n)_{n\ge 1}$ is relatively compact in $L^r(\R^2)$ for all $2\le r<\infty$ up to a translation and change of gauge, i.e. there exists $(y_n)_{n\ge 1} \subset \R^2$ such that up to subsequence
		\[
		e^{i \bar{A}_n} f_n(\cdot + y_n) \rightarrow \phi
		\]
		strongly in $L^r(\R^2)$ for all $2\le r<\infty$, where $\bar{A}_n(x)= A(y_n) \cdot x$ and $\phi \in H^1_A(\R^2)$. In fact,
		denote
		\[
		\tilde{f}_n(x):= e^{iA(y_n) \cdot x} f_n(x+y_n).
		\]
		We see that
		\[
		|\tilde{f}_n(x)|= |f_n(x+y_n)|, \quad \|\nabla_A \tilde{f}_n\|^2_{L^2} = \|\nabla_A f_n\|^2_{L^2}.
		\]
		It follows that $(\tilde{f}_n)_{n\ge 1}$ is a bounded sequence in $H^1_A(\R^2)$ satisfying for all $\epsilon>0$, there exists $R(\epsilon)>0$ such that for all $n\ge 1$,
		\[
		\int_{B^c(0,R(\epsilon))} |\tilde{f}_n(x)|^2dx = \int_{B^c(0,R(\epsilon))} |f_n(x+y_n)|^2 dx<\epsilon.
		\]
		By the standard diagonalization argument and the compact embedding $H^1_A(\R^2) \hookrightarrow L^r(B(0,R))$ for all $R>0$ and all $2\le r<\infty$, we show that $\tilde{f}_n \rightarrow \phi$ strongly in $L^2(\R^2)$, hence strongly in $L^r(\R^2)$ for all $2\le r<\infty$ by Sobolev embedding.
	\end{itemize}
\end{remark}

\noindent {\it Proof of Proposition \ref{prop:orbi-cri-1}.} The proof is divided into three steps.

{\bf Step 1.} We first show that for each $\rho>0$, $I^0_\gamma(\rho)$
is well-defined, i.e. $I^0_\gamma(\rho)>-\infty$. Indeed, recalling
\eqref{eq:E0} and arguing as
in \eqref{eq:est-E}, we have directly
\begin{align} \label{eq:est-E-Omega}
E^0_\gamma(f) \ge \frac{1}{2} \|\nabla_A f\|^2_{L^2} - \frac{\sqrt{e}}{2}\rho
\end{align}
for any $f \in H^1_A(\R^2)$ satisfying $\|f\|^2_{L^2}=\rho$.

{\bf Step 2.} We show the existence of minimizers for
$I^0_\gamma(\rho)$, under the assumption $I^0_\gamma(\rho)<0$. It is done by using the concentration-compactness argument as in \cite{EsLi89}. Let $(f_n)_{n\ge1}$ be a minimizing sequence for $I^0_\gamma(\rho)$. By \eqref{eq:est-E-Omega}, we see that $(f_n)_{n\ge 1}$ is a bounded sequence in $H^1_A(\R^2)$ satisfying
\[
\|f_n\|^2_{L^2}=\rho, \quad \forall n\ge 1, \quad E^0_\gamma(f_n)\rightarrow I^0_\gamma(\rho) \text{ as } n\rightarrow \infty.
\]
By Lemma \ref{lem:conc-comp}, there exists a subsequence still denoted by $(f_n)_{n\ge 1}$ satisfying one of the following three possibilities: vanishing, dichotomy and compactness.

{\bf No vanishing.} Suppose that vanishing occurs. Passing to a subsequence, we infer that $f_n \rightarrow 0$ strongly in $L^r(\R^2)$ for all $2<r<\infty$. Thanks to \eqref{eq:est-log}, we infer that
\[
\int_{\R^2} |f_n|^4 \ln \left(\frac{|f_n|^2}{\sqrt{e}}\right) dx \Tend
n \infty 0,
\]
hence
\[
I^0_\gamma(\rho) = \lim_{n\rightarrow \infty} E^0_\gamma(f_n) = \lim_{n\rightarrow \infty} \|\nabla_A f_n\|^2_{L^2} \ge 0,
\]
which contradicts the assumption $I^0_\gamma(\rho)<0$.

{\bf No dichotomy.} If dichotomy occurs, then there exist $a \in (0,\rho)$ and sequences $(g_n)_{n\ge 1}$, $(h_n)_{n\ge 1}$ bounded in $H^1_A(\R^2)$ such that \eqref{eq:dicho} holds. To rule out the dichotomy, we first claim that there exists $\delta>0$ such that
\begin{align} \label{eq:no-dicho-1}
\liminf_{n\rightarrow \infty} \|f_n\|^4_{L^4} \ge \delta>0.
\end{align}
Since $E^0_\gamma(f_n) \rightarrow I^0_\gamma(\rho)<0$, we see that for $n$ sufficiently large, $E^0_\gamma(f_n) \le \frac{I^0_\gamma(\rho)}{2}$. It follows that
\begin{align*}
\frac{I^0_\gamma(\rho)}{2} \ge E^0_\gamma(f_n) = \frac{1}{2} \|\nabla_A f_n\|^2_{L^2}  &+ \frac{1}{2} \int_{|f_n|^2>\sqrt{e}} |f_n|^4 \ln \left(\frac{|f_n|^2}{\sqrt{e}}\right) dx \\
&- \frac{1}{2} \int_{|f_n|^2<\sqrt{e}} |f_n|^4 \ln \left(\frac{\sqrt{e}}{|f_n|^2}\right) dx
\end{align*}
which implies, since $0<\ln z\lesssim \sqrt z$ for $z>1$,
\[
\|f_n\|^3_{L^3} \gtrsim \int_{|f_n|^2 <\sqrt{e}} |f_n|^4 \ln \left(\frac{\sqrt{e}}{|f_n|^2}\right) dx \ge -\frac{I^0_\gamma(\rho)}{2}>0,
\]
for $n$ sufficiently large. The claim \eqref{eq:no-dicho-1} follows from the H\"older inequality $\|f\|^3_{L^3} \le \|f\|^2_{L^4} \|f\|_{L^2}$ and $\|f_n\|^2_{L^2} =\rho$.

We next claim that
\begin{equation} \label{eq:no-dicho-2}
\liminf_{n\rightarrow \infty}\( E^0_\gamma(f_n) - E^0_\gamma(g_n) - E^0_\gamma(h_n) \)\ge 0.
\end{equation}
To see this, we consider $K(z) = z^4 \ln (z^2)$ for $z>0$. Using  Taylor expansion and the fact that
\[
|K'(z)| \lesssim_\epsilon |z|^{3-\epsilon} + |z|^{3+\epsilon}
\]
for any $\epsilon>0$, we infer that
\[
\left|K\left(\sum_{j=1}^N z_j \right) - \sum_{j=1}^N K(z_j)\right| \lesssim_{\epsilon,N} \sum_{\ell \ne k} |z_\ell| (|z_k|^{3-\epsilon} + |z_k|^{3+\epsilon}).
\]
Applying the above estimate with $\epsilon=1, N=3$ and $e_n:= f_n-g_n - h_n $, we see that
\begin{multline*}
\left| \int K(f_n) dx - \int K(g_n)dx -\int K(h_n) dx -\int K(e_n) dx\right| \\ \lesssim \int |e_n| (|g_n|^2+ |g_n|^4+ |h_n|^2+|h_n|^4)dx + \int (|g_n| + |h_n|)(|e_n|^2 + |e_n|^4) dx.
\end{multline*}
Using \eqref{eq:est-log}, H\"older inequality and the fact that $e_n \rightarrow 0$ strongly in $L^r(\R^2)$ for all $2\le r<\infty$, we have
\[
\int_{\R^2} |f_n|^4 \ln(|f_n|^2) dx - \int_{\R^2} |g_n|^4 \ln(|h_n|^2)
dx - \int_{\R^2} |h_n|^4 \ln(|h_n|^2) dx \Tend n \infty 0.
\]
Similarly, we can prove that
\[
\|f_n\|^4_{L^4} - \|g_n\|^4_{L^4} - \|h_n\|^4_{L^4} \Tend n \infty 0.
\]
This shows that
\[
\int_{\R^2} |f_n|^4 \ln \left(\frac{|f_n|^2}{\sqrt{e}} \right) dx -
\int_{\R^2} |g_n|^4 \ln \left(\frac{|g_n|^2}{\sqrt{e}} \right)
dx-\int_{\R^2} |h_n|^4 \ln \left(\frac{|h_n|^2}{\sqrt{e}} \right) dx
\Tend n\infty 0.
\]
Using this and \eqref{eq:dicho}, we show \eqref{eq:no-dicho-2}.

Now, let $\lambda>0$. We have
\begin{align*}
E^0_\gamma(\lambda f) &= \frac{\lambda^2}{2} \|\nabla_A f\|^2_{L^2} + \frac{\lambda^4}{2} \int_{\R^2} |f|^4 \ln \left(\lambda^2 \frac{|f|^2}{\sqrt{e}}\right) dx \\
&= \frac{\lambda^2}{2} \|\nabla_A f\|^2_{L^2} + \frac{\lambda^4}{2} \int_{\R^2} |f|^4 \ln \left(\frac{|f|^2}{\sqrt{e}}\right) dx  + \frac{\lambda^4 \ln(\lambda^2)}{2} \|f\|^4_{L^4} \\
&= \lambda^4 E^0_\gamma(f) - \frac{\lambda^4-\lambda^2}{2} \|\nabla_A f\|^2_{L^2} + \frac{\lambda^4 \ln(\lambda^2)}{2} \|f\|^4_{L^4},
\end{align*}
which implies
\[
E^0_\gamma(f) = \lambda^{-4} E^0_\gamma(\lambda f) + \frac{1-\lambda^{-2}}{2} \|\nabla_A f\|^2_{L^2} - \frac{\ln(\lambda^2)}{2}\|f\|^4_{L^4}.
\]
Set $\lambda_n:=\frac{\sqrt{\rho}}{\|g_n\|_{L^2}}$ and $\mu_n := \frac{\sqrt{\rho}}{\|h_n\|_{L^2}}$. It follows that $\|\lambda_n g_n\|^2_{L^2} =\|\mu_n h_n\|^2_{L^2} =\rho$, hence $E^0_\gamma(\lambda_n g_n), E^0_\gamma(\mu_n h_n) \ge I^0_\gamma(\rho)$. We see that for $n$ sufficiently large,
\begin{align*}
E^0_\gamma(g_n) &=  \lambda_n^{-4} E^0_\gamma(\lambda_n g_n) + \frac{1-\lambda_n^{-2}}{2} \|\nabla_A g_n\|^2_{L^2} - \frac{\ln(\lambda_n^2)}{2} \|g_n\|^4_{L^4} \\
& \ge \lambda_n^{-4} I^0_\gamma(\rho) + \frac{1-\lambda_n^{-2}}{2C_4} \frac{\|g_n\|^4_{L^4}}{\|g_n\|^2_{L^2}} - \frac{\ln(\lambda_n^2)}{2} \|g_n\|^4_{L^4}.
\end{align*}
Here we have used the magnetic Gagliardo-Nirenberg inequality \eqref{eq:mag-GN} and the fact $\lambda_n \rightarrow \sqrt{\frac{\rho}{a}}>1$ as $n\rightarrow \infty$. A similar estimate goes for $E^0_\gamma(h_n)$. Thus, we get
\begin{align*}
E^0_\gamma(g_n) + E^0_\gamma(h_n) \ge \left(\lambda_n^{-4}+\mu_n^{-4}\right) I^0_\gamma(\rho) &+ \frac{1-\lambda_n^{-2}}{2C_4} \frac{\|g_n\|^4_{L^4}}{\|g_n\|^2_{L^2}} - \frac{\ln(\lambda_n^2)}{2} \|g_n\|^4_{L^4} \\
&+ \frac{1-\mu_n^{-2}}{2C_4} \frac{\|h_n\|^4_{L^4}}{\|h_n\|^2_{L^2}}- \frac{\ln(\mu_n^2)}{2} \|h_n\|^4_{L^4}.
\end{align*}
Passing $n\rightarrow \infty$ and using \eqref{eq:no-dicho-2}, we obtain
\begin{align}
I^0_\gamma(\rho) &\ge \frac{a^2+ (\rho-a)^2}{\rho^2} I^0_\gamma(\rho) + \frac{1}{2} \left(\frac{1}{C_4\rho}\left(\frac{\rho}{a} -1\right) - \ln\left(\frac{\rho}{a}\right)\right) \liminf_{n\rightarrow \infty} \|g_n\|^4_{L^4} \nonumber \\
&\mathrel{\phantom{\ge \frac{a^2+ (\rho-a)^2}{\rho^2} I^0_\gamma(\rho) }} + \frac{1}{2} \left(\frac{1}{C_4\rho}\left(\frac{\rho}{\rho-a} -1\right) - \ln\left(\frac{\rho}{\rho-a}\right)\right) \liminf_{n\rightarrow \infty} \|h_n\|^4_{L^4} \nonumber \\
&\ge \frac{a^2+ (\rho-a)^2}{\rho^2} I^0_\gamma(\rho)  + \frac{1}{2} \min \left\{K_1, K_2\right\} \liminf_{n\rightarrow \infty} \|f_n\|^4_{L^4}, \label{eq:no-dicho-3}
\end{align}
where
\begin{equation} \label{eq:cond-rho}
K_1:= \frac{1}{C_4\rho}\left(\frac{\rho}{a} -1\right) - \ln\left(\frac{\rho}{a}\right), \quad K_2:= \frac{1}{C_4\rho}\left(\frac{\rho}{\rho-a} -1\right) - \ln\left(\frac{\rho}{\rho-a}\right).
\end{equation}
Here $K_1, K_2 >0$ as soon as $C_4\rho\le 1$, which follows from
$0<\rho \le \|Q\|_{L^2}^2$, see Lemma~\ref{lem:GNA}. Using \eqref{eq:no-dicho-1} and \eqref{eq:no-dicho-3}, we infer that $I^0_\gamma(\rho)>0$ which is a contradiction.

{\bf Compactness.} Therefore,  compactness must occur. In this case, there exist $\phi \in H^1_A(\R^2)$ and $(y_n)_{n\ge 1} \subset \R^2$ such that up to a subsequence,
\[
\tilde{f}_n \rightarrow \phi
\]
strongly in $L^r(\R^2)$ for all $2\le r<\infty$, where $\tilde{f}_n(x):= e^{i A(y_n) \cdot x} f_n(x+y_n)$. It follows that
\[
\|\phi\|^2_{L^2} = \lim_{n\rightarrow \infty} \|\tilde{f}_n\|^2_{L^2} = \lim_{n\rightarrow \infty} \|f_n(\cdot+y_n)\|^2_{L^2} = \rho,
\]
and by \eqref{eq:est-log},
\[
\int_{\R^2} |\phi|^2 \ln\left(\frac{|\phi|^2}{\sqrt{e}}\right) dx = \lim_{n\rightarrow \infty} \int_{\R^2} |\tilde{f}_n|^4 \ln \left( \frac{|\tilde{f}_n|^2}{\sqrt{e}}\right) dx = \lim_{n\rightarrow\infty} \int_{\R^2} |f_n|^4 \ln \left(\frac{|f_n|^2}{\sqrt{e}}\right) dx.
\]
We also have
\[
\|\nabla_A\phi\|^2_{L^2} \le \liminf_{n\rightarrow \infty} \|\nabla_A \tilde{f}_n\|^2_{L^2} =\liminf_{n\rightarrow \infty} \|\nabla_A f_n\|^2_{L^2}.
\]
Thus, we get
\[
E^0_\gamma(\phi) \le \liminf_{n\rightarrow \infty} E^0_\gamma(f_n) = I^0_\gamma(\rho).
\]
This shows that $\phi$ is a minimizer for $I^0_\gamma(\rho)$. We also have that $\tilde{f}_n \rightarrow \phi$ strongly in $H^1_A(\R^2)$ as $n\rightarrow \infty$.

{\bf Step 3.} The orbital stability of $\G^0_\gamma(\rho)$ follows by the same argument as in Theorem \ref{theo:orbi-low}. We thus omit the details. The proof is now complete.
\hfill $\Box$

In the following lemma, we show that the conditions $0<\rho \le \|Q\|_{L^2}^2$ and $I^0_\gamma(\rho)<0$ are satisfied for some data $f\in H^1_A(\R^2)$.
\begin{lemma} \label{lem:data}
	If $0<\gamma<\frac{1}{2 e^{3/2}}$, then there exists $f \in H^1_A(\R^2)$ satisfying $\|f\|^2_{L^2} = \rho \le \|Q\|_{L^2}^2$ and $I^0_\gamma(\rho) <0$.
\end{lemma}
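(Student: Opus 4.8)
The plan is to exhibit an explicit Gaussian trial function and optimize two scaling parameters. Take $f(x) = \lambda e^{-b|x|^2}$ with $\lambda,b>0$ to be chosen. Since $f\in\Sigma\subset H^1_A(\R^2)$ it is an admissible competitor, and every quantity entering $E^0_\gamma$ is an elementary Gaussian integral, so the energy can be written explicitly as a function of $(\lambda,b)$ and then minimized by hand.

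First I would record the relevant integrals. Writing $\nabla_A f=(-2bx-iA(x))f$ and using that $-2bx$ and $-iA(x)=-i\gamma(-x_2,x_1)$ are, respectively, a real and a purely imaginary vector, the cross term drops and $|\nabla_A f(x)|^2=(4b^2+\gamma^2)|x|^2|f(x)|^2$. With $\int_{\R^2}e^{-a|x|^2}\,dx=\pi/a$ and $\int_{\R^2}|x|^2e^{-a|x|^2}\,dx=\pi/a^2$, this yields $\|f\|_{L^2}^2=\pi\lambda^2/(2b)$, $\|\nabla_A f\|_{L^2}^2=\pi\lambda^2\,(1+\gamma^2/(4b^2))$, $\|f\|_{L^4}^4=\pi\lambda^4/(4b)$, and, since $\ln|f(x)|^2=2\ln\lambda-2b|x|^2$, also $\int_{\R^2}|f|^4\ln(|f|^2/\sqrt e)\,dx=\pi\lambda^4\ln\lambda/(2b)-\pi\lambda^4/(4b)$. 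Collecting terms,
\[
E^0_\gamma(f)=\frac{\pi\lambda^2}{2}\Bigl(1+\frac{\gamma^2}{4b^2}\Bigr)+\frac{\pi\lambda^4}{4b}\ln\lambda-\frac{\pi\lambda^4}{8b},\qquad \|f\|_{L^2}^2=\frac{\pi\lambda^2}{2b}.
\]

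To meet the mass constraint I would tie $\lambda$ to $b$ by $\lambda^2=2b$, which makes $\|f\|_{L^2}^2=\pi$; by \eqref{eq:estQ} one has $\pi\le\|Q\|_{L^2}^2$, so $\rho:=\pi$ satisfies $0<\rho\le\|Q\|_{L^2}^2$ as required. Substituting $\lambda^2=2b$ collapses the energy to a one-parameter expression,
\[
E^0_\gamma(f)=\frac{\pi b}{2}\bigl(1+\ln(2b)\bigr)+\frac{\pi\gamma^2}{4b},
\]
so $E^0_\gamma(f)<0$ is equivalent to $\gamma^2<2b^2\bigl(-1-\ln(2b)\bigr)=:g(b)$. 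A one-variable optimization ($g'(b)=0$ gives $\ln(2b)=-3/2$) shows $\max_{b>0}g(b)=g\bigl(\tfrac12e^{-3/2}\bigr)=\tfrac14e^{-3}$. Hence, choosing $b=\tfrac12e^{-3/2}$ and $\lambda^2=e^{-3/2}$, we get
\[
E^0_\gamma(f)=\frac{\pi}{4b}\bigl(\gamma^2-b^2\bigr)=\frac{\pi e^{3/2}}{2}\Bigl(\gamma^2-\frac{1}{4e^3}\Bigr)<0
\]
precisely because $\gamma<\frac{1}{2e^{3/2}}$. Since $I^0_\gamma(\rho)\le E^0_\gamma(f)$, this gives $I^0_\gamma(\rho)<0$ with $\rho=\pi\le\|Q\|_{L^2}^2$, proving the lemma.

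There is no genuine conceptual obstacle here; the work is routine once the Gaussian ansatz is fixed. The only delicate point is that the two requirements pull against each other — making $E^0_\gamma(f)$ negative favors a fairly small $b$ (so that $\ln(2b)$ is strongly negative), while the magnetic kinetic term $\pi\gamma^2/(4b)$ blows up as $b\to0$ — and it is exactly the balance of these effects, after normalizing the mass to $\pi$, that produces the sharp threshold $\gamma<\frac{1}{2e^{3/2}}$. If one only wants \emph{some} admissible $\rho$ rather than the optimal $\gamma$-range, any $b$ with $g(b)>\gamma^2$ works and the argument shortens accordingly.
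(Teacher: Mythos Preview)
Your proof is correct and follows essentially the same approach as the paper: both use the Gaussian ansatz $\lambda e^{-b|x|^2}$, compute the same elementary integrals, invoke $\pi\le\|Q\|_{L^2}^2$ from \eqref{eq:estQ} to justify the mass constraint, and optimize in $b$ to obtain the threshold $\gamma<\tfrac{1}{2e^{3/2}}$. The only difference is cosmetic: the paper keeps $\theta=\lambda^2\in(0,2b]$ as a free parameter and argues that the relevant auxiliary function is decreasing on that interval (so the optimal choice is $\theta=2b$), whereas you set $\lambda^2=2b$ from the outset, fixing $\rho=\pi$ and reducing directly to a one-variable optimization in $b$; this streamlines the computation without changing the substance.
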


\begin{proof}
We look for a function $f\in H^1_A(\R^2)$ satisfying
\begin{align} \label{eq:data}
\|f\|^2_{L^2}=\rho \le \|Q\|_{L^2}^2, \quad E^0_\gamma(f) <0.
\end{align}
To this end, we consider $f_b(x) = e^{-b|x|^2}$ for some $b>0$ to be chosen later. A direct computation shows
\[
\|f_b\|^2_{L^2} = \int_{\R^2} e^{-2b|x|^2} dx = \frac{\pi}{2b}.
\]
We also have
\[
\|\nabla_A f_b\|^2_{L^2} = (\gamma^2+ 4b^2) \int_{\R^2} |x|^2 e^{-2b|x|^2} dx = \pi \left( 1+\frac{\gamma^2}{4b^2}\right).
\]
In addition, we have
\[
\|f_b\|^4_{L^4(\R^2)} = \int_{\R^2} e^{-4b|x|^2} dx = \frac{\pi}{4b}
\]
and
\[
\int_{\R^2} |f_b|^4 \ln \left(\frac{|f_b|^2}{\sqrt{e}}\right) dx = - 2b \int_{\R^2} |x|^2 e^{-4b|x|^2} dx -\frac{1}{2} \int_{\R^2} e^{-4b|x|^2} dx = -\frac{\pi}{4b}.
\]
Let $\lambda>0$ to be chosen later. We see that
\[
\|\lambda f_b\|^2_{L^2} =\lambda^2 \|f_b\|^2_{L^2} = \frac{\pi}{2b} \lambda^2, \quad E^0_\gamma(\lambda f_b) = \frac{\pi}{2} \left(1+\frac{\gamma^2}{4b^2}\right) \lambda^2 - \frac{\pi}{8b} \lambda^4 + \frac{\pi}{8b} \lambda^4 \ln(\lambda^2).
\]
To make $\|\lambda f_b\|^2_{L^2} \le \|Q\|_{L^2}^2$, we need
$\lambda^2 \le \frac{2b}{\pi}\|Q\|_{L^2}^2$. In view of
\eqref{eq:estQ}, this is granted by the property $\lambda^2\le 2b$. Consider
\[
F(\theta) = \frac{\pi}{2} \left(1+\frac{\gamma^2}{4b^2}\right) \theta - \frac{\pi}{8b} \theta^2 + \frac{\pi}{8b} \theta^2 \ln (\theta), \quad \text{ for } 0<\theta \le 2b.
\]
We rewrite
\[
F(\theta) = \frac{\pi \theta}{8b^2} \left( 4b^2+ \gamma^2 - b\theta + b\theta \ln(\theta)\right) = :\frac{\pi \theta}{8b^2} G(\theta).
\]
The condition $E^0_\gamma(\lambda f_b)<0$ is now reduced to finding $\theta \in \left(0,2b\right]$ so that $G(\theta)<0$. We have
\[
G'(\theta) = b \ln (\theta).
\]
If $2b\ge 1$, we see that $G$ attains its minimum at $\theta =1$,
however $G(1) =4b^2+\gamma^2-b>0$ for $2b\ge 1$. So, we need $2b<1$. In this case, $G$ is strictly decreasing on $\left(0,2b\right]$. To find $\theta \in \left(0,2b\right]$ so that $G(\theta)<0$, a necessary condition is
\[
G\left(2b\right) <0 \Longleftrightarrow 4b^2+ \gamma^2 - 2b^2 -2b^2\ln
\left(\frac{1}{2b}\right) <0.
\]
Consider $H(b):= 2b^2+ \gamma^2 +2b^2\ln (2b)$ for $b\in \left(0,\frac{1}{2}\right)$.  We compute
\[
H'(b) = 2b\(3+2\ln(2b)\).
\]
We see that on $\left(0,\frac{1}{2}\right)$, $H$ attains its minimum at $b_0 = \frac{e^{-3/2}}{2}<\frac{1}{2}$. A direct computation shows
\[
H(b_0) = \gamma^2-\frac{1}{4 e^3}.
\]
Thus, if $\gamma<\tfrac{1}{ 2 e^{3/2}}$, then we have $G\left(2b_0\right) <0$. Therefore, by choosing $\lambda>0$ so that $\lambda^2 \le 2b_0$, we show the existence of a function $f \in H^1_A(\R^2)$ satisfying \eqref{eq:data}. The proof is complete.	
\end{proof}

\noindent {\it Proof of Theorem \ref{theo:orbi-cri-1}.}
It follows directly from Proposition \ref{prop:orbi-cri-1} and Lemma \ref{lem:data}.
\hfill $\Box$

\smallbreak

In the case $V_0>0$, the concentration-compactness argument presented
above does not work due to the lack of spatial translation in the new
potential term
\[
\int_{\R^2} e^{-\gamma_0|x|^2} |f(x)|^2 dx.
\]
More precisely, the sequence $(y_n)_{n\ge 1} \subset \R^2$, which
appeared in the compactness, may tend to infinity, and then
\[
\int_{\R^2} e^{-\gamma_0|x|^2} |f_n(x+y_n)|^2 dx \Tend n \infty 0.
\]
To overcome this difficulty, we restrict our consideration on $H^1_{A,\rad}(\R^2)$ the space of radially symmetric functions of $H^1_A(\R^2)$. Note that this restriction has the drawback that we no longer see the effect of rotation to the equation since $L_z f=0$ for a radial functions $f$.

\noindent {\it Proof of Theorem \ref{theo:orbi-cri-2}.}
	We proceed in two steps.
	
	{\bf Step 1.} We show that there exists a minimizer for $I_{\gamma,\rad}(\rho)$. Let $\rho>0$ and $f \in H^1_{A,\rad}(\R^2)$ satisfy $\|f\|^2_{L^2}=\rho$. We have
	\begin{align}
	E_\gamma(f) &= \frac{1}{2} \|\nabla_A f\|^2_{L^2} + V_0
                      \int_{\R^2} e^{-\gamma_0 |x|^2} |f(x)|^2 dx +
                      \frac{1}{2} \int_{|f|^2>\sqrt{e}} |f|^4 \ln
                      \left(\frac{|f|^2}{\sqrt{e}} \right) dx
                      \nonumber \\
	&\mathrel{\phantom{= \frac{1}{2} \|\nabla_A f\|^2_{L^2} + V_0 \int_{\R^2} e^{-\gamma |x|^2} |f(x)|^2 dx}}- \frac{1}{2}\int_{|f|^2<\sqrt{e}} |f|^4 \ln \left(\frac{\sqrt{e}}{|f|^2}\right) dx \nonumber \\
	&\ge \frac{1}{2} \|\nabla_A f\|^2_{L^2} - \frac{\sqrt{e}}{2} \int_{|f|^2<\sqrt{e}} |f|^2 dx \nonumber \\
	&\ge \frac{1}{2} \|\nabla_A f\|^2_{L^2} -\frac{\sqrt{e}}{2} \rho. \label{est-E-cri}
	\end{align}
	This shows that $I_{\gamma,\rad}(\rho)>-\infty$ is well-defined.
	
	Next, let $(f_n)_{n\ge 1}$ be a minimizing sequence for $I_{\gamma,\rad}(\rho)$. By \eqref{est-E-cri}, $(f_n)_{n\ge 1}$ is a bounded sequence in $H^1_{A,\rad}(\R^2)$. Since $H^1_{A,\rad}(\R^2) \hookrightarrow L^r(\R^2)$ is compact for all $2\le r<\infty$, there exists $\phi \in H^1_{A,\rad}(\R^2)$ such that, up to a subsequence, $f_n \rightarrow \phi$ weakly in $H^1_{A,\rad}(\R^2)$ and strongly in $L^r(\R^2)$ for all $2\le r<\infty$. By the strong convergence and \eqref{est-fg}, we see that
	\[
	\int_{\R^2} |\phi|^4 \ln \left(\frac{|\phi|^2}{\sqrt{e}}\right) dx=\lim_{n\rightarrow \infty} \int_{\R^2} |f_n|^4 \ln \left(\frac{|f_n|^2}{\sqrt{e}}\right) dx.
	\]
	On the other hand, by the weak continuity of the potential energy (see e.g., \cite[Theorem 11.4]{LiLo01}), we have
	\[
	\int_{\R^2} e^{-\gamma_0 |x|^2} |\phi(x)|^2 dx = \lim_{n\rightarrow \infty} \int_{\R^2} e^{-\gamma_0 |x|^2} |f_n(x)|^2 dx.
	\]
	Moreover, the lower semicontinuity of the weak convergence implies
	\[
	\|\nabla_A \phi\|_{L^2} \le \liminf_{n\rightarrow \infty} \|\nabla_A f_n\|_{L^2}.
	\]
	Thus we get
	\[
	E_\gamma(\phi) \le \liminf_{n\rightarrow \infty} E_\gamma(f_n)= I_{\gamma,\rad}(\rho),
	\]
	which together with
	\[
	\|\phi\|^2_{L^2} = \lim_{n\rightarrow \infty} \|f_n\|^2_{L^2} =\rho
	\]
	implies that $\phi$ is a minimizer for $I_{\gamma,\rad}(\rho)$. In addition, we have that (up to a subsequence), $f_n\rightarrow \phi$ strongly in $H^1_{A,\rad}(\R^2)$.
	
	{\bf Step 2.} The orbital stability follows from the same argument as in the proof of Theorem \ref{theo:orbi-low}. We omit the details.
\hfill $\Box$

\subsection{High rotational speed}\label{sec:high}
In this subsection, we show the nonexistence of minimizers for $I_\Omega(\rho)$ given in Theorem \ref{theo:non-exis}.

\noindent {\it Proof of Theorem \ref{theo:non-exis}.}
The proof is inspired by the idea of \cite{BWM} (see also \cite{Ba07}). We consider
\[
f_m(x) =f_m(r,\theta)= C(m,\gamma) r^m e^{-\frac{\gamma|x|^2}{2}} e^{im\theta},
\]
where $m \in \N$ and $C(m,\gamma)>0$ to be determined later. Here $(r,\theta)$ is the polar coordinate of $x=(x_1,x_2) \in \R^2$, i.e.
\[
x_1 = r\cos \theta, \quad x_2 = r\sin \theta, \quad r >0, \quad \theta \in (0,2\pi].
\]
It is useful to recall the following formulas:
\[
  \begin{pmatrix}
    \partial_r x_1 & \partial_\theta x_1 \\
\partial_r x_2 & \partial_\theta x_2
\end{pmatrix}
=
\begin{pmatrix}
  \cos \theta & -r \sin \theta \\
\sin \theta & r \cos \theta
\end{pmatrix},
\]
and
\[
  \begin{pmatrix}
    \partial_{x_1} r & \partial_{x_2} r \\
\partial_{x_1} \theta & \partial_{x_2} \theta
  \end{pmatrix}
  =
  \begin{pmatrix}
    \cos \theta & \sin \theta \\
-\frac{\sin \theta}{r} & \frac{\cos \theta}{r}
  \end{pmatrix}
 .\]
We have
\[
\|f_m\|^2_{L^2(\R^2)} = \int_{\R^2} |f_m(x)|^2 dx = 2\pi C(m,\gamma)^2 \int_0^\infty r^{2m} e^{-\gamma r^2} r dr.
\]
Let $\rho>0$. We choose $C(m,\gamma)$ so that $\|f_m\|^2_{L^2(\R^2)} =\rho$ for all $m \in \N$. Set
\[
I(\gamma, m):= \int_0^\infty r^{2m} e^{-\gamma r^2} r dr,
\]
and write
\[
\|f_m\|^2_{L^2(\R^2)} = 2 \pi C(m,\gamma)^2 I(\gamma, m).
\]
Integrating by parts, we see that $I(\gamma, m) = \frac{m}{\gamma} I(\gamma, m-1)$ which, by induction, implies $I(\gamma,  m) = \frac{m!}{\gamma^m} I(\gamma, 0)$, where
\[
I(\gamma, 0) = \int_0^\infty e^{-\gamma r^2} r dr = \frac{1}{2\gamma}.
\]
Thus we get
\begin{align} \label{Im}
I(\gamma, m) = \frac{m!}{2\gamma^{m+1}}.
\end{align}
The condition $\|f_m\|^2_{L^2(\R^2)} =\rho$ is equivalent to
\begin{align} \label{C-m-gamma}
\rho= 2\pi C(m,\gamma)^2 \frac{m!}{2\gamma^{m+1}} \Longleftrightarrow C(m,\gamma)^2 = \frac{\rho \gamma^{m+1}}{\pi m!}
\end{align}
We compute
\begin{align*}
\partial_{x_1} f_m &= \partial_r f_m  \partial_{x_1} r + \partial_\theta f_m \partial_{x_1} \theta \\
&= C(m,\gamma) e^{im\theta} e^{-\frac{\gamma r^2}{2}} (mr^{m-1} - \gamma r^{m+1}) \cos \theta \\
&\mathrel{\phantom{=}} - C(m,\gamma) e^{im\theta} e^{-\frac{\gamma r^2}{2}} i m r^{m-1} \sin \theta\\
&= C(m,\gamma) e^{im\theta} e^{-\frac{\gamma r^2}{2}} \left((mr^{m-1} - \gamma r^{m+1}) \cos \theta - im r^{m-1} \sin \theta \right),
\end{align*}
and
\begin{align*}
\partial_{x_2} f_m &= \partial_r f_m \partial_{x_2}r + \partial_\theta f_m \partial_{x_2}\theta \\
&= C(m,\gamma) e^{im\theta} e^{-\frac{\gamma r^2}{2}} \left( (m r^{m-1} - \gamma r^{m+1}) \sin \theta + i m r^{m-1} \cos \theta \right).
\end{align*}
It follows that
\begin{align*}
|\nabla f_m|^2 &= |\partial_{x_1} f_m|^2 + |\partial_{x_2} f_m|^2 \\
&= C(m,\gamma)^2 e^{-\gamma r^2} \Big[\left((mr^{m-1} - \gamma r^{m+1}) \cos \theta - im r^{m-1} \sin \theta \right)^2 \\
&\mathrel{\phantom{= [C(m,\gamma)]^2 e^{-\gamma r^2} \Big[}} +
\left( (m r^{m-1} - \gamma r^{m+1}) \sin \theta + i m r^{m-1} \cos \theta \right)^2
\Big] \\
&= C(m,\gamma)^2 e^{-\gamma r^2} \left(2 m^2 r^{2(m-1)} - 2m\gamma r^{2m} + \gamma^2 r^{2(m+1)}\right).
\end{align*}
Thus we have
\begin{align*}
\|\nabla f_m\|^2_{L^2(\R^2)} & = 2\pi \int_0^\infty |\nabla f_m|^2 r dr \\
&= 2\pi C(m,\gamma)^2 \int_0^\infty \left(2m^2 r^{2(m-1)} -2 m\gamma e^{2m} + \gamma^2 r^{2(m+1)} \right) e^{-\gamma r^2} rdr \\
&= 2\pi C(m,\gamma)^2 \left( 2m^2 I(\gamma, m-1) - 2m\gamma I(\gamma, m) + \gamma^2 I(\gamma, m+1)\right).
\end{align*}
Thanks to \eqref{Im} and \eqref{C-m-gamma}, we have
\begin{align*}
\|\nabla f_m\|^2_{L^2(\R^2)} &= 2\pi \frac{\rho \gamma^{m+1}}{\pi m!} \left( 2m^2 \frac{(m-1)!}{2\gamma^m} -2m\gamma \frac{m!}{2\gamma^{m+1}} + \gamma^2 \frac{(m+1)!}{2\gamma^{m+2}}\right) \\
&=\rho(m+1) \gamma.
\end{align*}
We next compute
\begin{align*}
\int_{\R^2} V |f_m|^2 dx &= C(m,\gamma)^2\int_{\R^2} \left( \frac{\gamma^2}{2} |x|^2 + V_0 e^{-\gamma_0 |x|^2} \right) |x|^{2m} e^{-\gamma |x|^2} dx \\
&=2\pi C(m,\gamma)^2 \int_0^\infty \left( \frac{\gamma^2}{2} r^2  +
   V_0 e^{-\gamma_0 r^2}\right) r^{2m} e^{-\gamma r^2} r dr \\
&= 2\pi C(m,\gamma)^2 \left( \frac{\gamma^2}{2} \int_0^\infty r^{2(m+1)} e^{-\gamma r^2} r dr + V_0 \int_0^\infty r^{2m} e^{-(\gamma+\gamma_0) r^2} r dr \right) \\
&= 2\pi C(m,\gamma)^2 \left( \frac{\gamma^2}{2} I(\gamma, m+1) + V_0 I(\gamma+\gamma_0, m)\right),
\end{align*}
which, together with \eqref{Im} and \eqref{C-m-gamma}, implies that
\begin{align*}
\int_{\R^2} V |f_m|^2 dx &= 2\pi \frac{\rho\gamma^{m+1}}{\pi m!} \left( \frac{\gamma^2}{2} \frac{(m+1)!}{2\gamma^{m+2}} + V_0 \frac{m!}{2(\gamma+\gamma_0)^{m+1}}  \right) \\
&= \frac{\rho}{2}(m+1)\gamma + \(\frac{\gamma}{\gamma+\gamma_0}\)^{m+1}V_0 \rho.
\end{align*}
Moreover, since the rotation operator can be rewritten as $L_z = - i \partial_\theta$, we have
\begin{align*}
L(f_m) &= \int_{\R^2} \overline{f}_m L_z f_m dx = 2\pi C(m,\gamma)^2 \int_0^\infty m r^{2m} e^{-\gamma r^2} r dr \\
&=  2\pi m C(m,\gamma)^2 I(\gamma, m) = m \|f_m\|^2_{L^2(\R^2)}= m \rho.
\end{align*}
Next we compute
\begin{align*}
\|f_m\|^6_{L^6(\R^2)} &= 2\pi C(m,\gamma)^6 \int_0^\infty r^{6m} e^{-3\gamma r^2} r dr = 2\pi C(m,\gamma)^6 I\left(3\gamma, 3m\right) \\
&= 2\pi C(m,\gamma)^6  \frac{(3m)!}{2 (3\gamma)^{3m+1}}
= 2\pi \left(\frac{\rho\gamma^{m+1}}{\pi m!}\right)^3  \frac{(3m)!}{2 (3\gamma)^{3m+1}} \\
&= \rho^3 \frac{\gamma^2}{\pi^2} \frac{(3m)!}{(3^m m!)^3},
\end{align*}
which tends to zero as $m \rightarrow \infty$ for each $\rho>0$ fixed. By interpolation between $L^2$ and $L^6$, we infer that
\[
\|f_m\|^p_{L^p(\R^2)} \Tend m \infty 0 ,\quad \forall p\in ]2,6].
\]
This yields
\[
\left| \int_{\R^2} |f_m|^4 \ln \left( \frac{|f_m|^2}{\sqrt{e}}\right)
  dx\right| \lesssim \|f_m\|^3_{L^3(\R^2)} + \|f_m\|^5_{L^5(\R^2)}
\Tend m \infty 0.
\]
Finally we have
\begin{align*}
E_\Omega(f_m) &= \frac{1}{2} \|\nabla f_m\|^2_{L^2} + \int_{\R^2} V |f_m|^2 dx + \frac{1}{2} \int_{\R^2} |f_m|^4 \ln \left(\frac{|f_m|^2}{\sqrt{e}}\right) dx - \Omega L(f_m) \\
&= \rho(m+1)\gamma +\(\frac{\gamma}{\gamma+\gamma_0}\)^{m+1}V_0 \rho + o_m(1) - \Omega m \rho \\
&= - m \rho(\Omega -\gamma) + \rho\gamma+ \(\frac{\gamma}{\gamma+\gamma_0}\)^{m+1}V_0 \rho+ o_m(1),
\end{align*}
where $o_m(1) \rightarrow 0$ as $m \rightarrow \infty$. As $\Omega
>\gamma$, letting $m \rightarrow \infty$ yields $E_\Omega(f_m) \rightarrow -\infty$. Therefore, for any $\rho>0$, there is no minimizer for $I_\Omega(\rho)$.
\hfill $\Box$

\appendix

%%%%%%%%%%%%%%%%%%%%%%%%%%%%%%%%%%%%%%%%%%%%%%%%%%%%%%%
%%%%%%%%%%%%%%%%%%%%%%%%%%%%%%%%%%%%%%%%%%%%%%%%%%%%%%%

\section{Ground states}\label{sec:ground}

In this appendix, we collect some information on stationary solutions
in the radial case, where the rotation terms is absent, so
\eqref{eq:ground} becomes
\begin{align} \label{eq:ground-rad}
-\frac{1}{2}\Delta \phi + \omega \phi + \phi |\phi|^2 \ln (|\phi|^2) + V \phi =0, \quad x \in \R^2.
\end{align}
Recall that
\[
V(x) = \frac{\gamma^2}{2}|x|^2 + V_0 e^{-\gamma_0 |x|^2}. %V_1(x) + V_2(x), \quad V_1(x):= \frac{\gamma^2}{2} |x|^2, \quad V_2(x):= V_0 e^{-\gamma |x|^2}.
\]
We introduce the minimizing problem
\begin{align} \label{eq:omega0}
\omega_{V_0} := \inf \left\{ \frac{1}{2} \|\nabla f\|^2_{L^2} + \int_{\R^2} V|f|^2 dx \ : \ f \in \Sigma, \|f\|^2_{L^2} =1\right\}.
\end{align}
Denoting by
\[
H = -\frac{1}{2}\Delta+V,
\]
we see that $\omega_{V_0}$ is simply the bottom of the spectrum of $H$. If $V_0=0$, then it is well-known that $\omega_0$ is attained by the Gaussian
\[
\varphi_0(x)=\sqrt{\frac{\gamma}{\pi}}e^{-\gamma |x|^2/2},\quad\text{and}\quad \omega_0 = \gamma.
\]
Moreover, every minimizer for $\omega_0$ is of the form $f(x) =e^{i\theta} \varphi_0(x)$, where $\theta \in \R$.
\begin{lemma} \label{lem:omega-V0}
	Let $V_0 \ge 0$. Then $\omega_{V_0}$ is attained and $\gamma=\omega_0\le \omega_{V_0} \le \gamma+\frac{\gamma}{\gamma+\gamma_0} V_0$.
\end{lemma}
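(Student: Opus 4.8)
The plan is to dispatch the three assertions---attainment of the infimum, the lower bound $\omega_{V_0}\ge\gamma$, and the upper bound $\omega_{V_0}\le\gamma+\tfrac{\gamma}{\gamma+\gamma_0}V_0$---separately, the last two being immediate consequences of the comparison with the pure harmonic oscillator and the attainment being a routine compactness argument.

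For the lower bound I would use that $V_0\ge 0$ forces $V(x)\ge\tfrac{\gamma^2}{2}|x|^2$ pointwise, so that for every $f\in\Sigma$ with $\|f\|_{L^2}^2=1$,
\[
\frac12\|\nabla f\|_{L^2}^2+\int_{\R^2}V|f|^2\,dx\ge\frac12\|\nabla f\|_{L^2}^2+\frac{\gamma^2}{2}\|xf\|_{L^2}^2\ge\omega_0=\gamma,
\]
the last inequality being exactly the variational characterization of the bottom of the spectrum of $-\tfrac12\Delta+\tfrac{\gamma^2}{2}|x|^2$ recalled just before the statement; taking the infimum over $f$ gives $\omega_{V_0}\ge\gamma$. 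For the upper bound I would simply test the functional defining $\omega_{V_0}$ with the harmonic oscillator ground state $\varphi_0(x)=\sqrt{\gamma/\pi}\,e^{-\gamma|x|^2/2}$, for which $\|\varphi_0\|_{L^2}^2=1$ and $\tfrac12\|\nabla\varphi_0\|_{L^2}^2+\tfrac{\gamma^2}{2}\|x\varphi_0\|_{L^2}^2=\gamma$. Then
\[
\omega_{V_0}\le\frac12\|\nabla\varphi_0\|_{L^2}^2+\int_{\R^2}V|\varphi_0|^2\,dx=\gamma+V_0\,\frac{\gamma}{\pi}\int_{\R^2}e^{-(\gamma+\gamma_0)|x|^2}\,dx=\gamma+\frac{\gamma}{\gamma+\gamma_0}V_0,
\]
using $\int_{\R^2}e^{-a|x|^2}\,dx=\pi/a$.

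Finally, for attainment I would take a minimizing sequence $(f_n)$. Since $\tfrac12\|\nabla f_n\|_{L^2}^2+\tfrac{\gamma^2}{2}\|xf_n\|_{L^2}^2\le\tfrac12\|\nabla f_n\|_{L^2}^2+\int V|f_n|^2\,dx\to\omega_{V_0}$ and $\|f_n\|_{L^2}^2=1$, the sequence is bounded in $\Sigma$; passing to a subsequence, $f_n\rightharpoonup\phi$ weakly in $\Sigma$ and $f_n\to\phi$ strongly in $L^2(\R^2)$ by the compact embedding $\Sigma\hookrightarrow L^2(\R^2)$. Hence $\|\phi\|_{L^2}^2=1$, the Gaussian potential term $V_0\int e^{-\gamma_0|x|^2}|f_n|^2\,dx$ converges to $V_0\int e^{-\gamma_0|x|^2}|\phi|^2\,dx$ (as $e^{-\gamma_0|x|^2}$ is bounded and $f_n\to\phi$ in $L^2$), and the quantity $\tfrac12\|\nabla\,\cdot\,\|_{L^2}^2+\tfrac{\gamma^2}{2}\|x\,\cdot\,\|_{L^2}^2$, being an equivalent squared $\Sigma$-norm, is weakly lower semicontinuous; splitting the liminf into this weakly lsc part plus the convergent Gaussian part gives
\[
\frac12\|\nabla\phi\|_{L^2}^2+\int_{\R^2}V|\phi|^2\,dx\le\liminf_{n\to\infty}\left(\frac12\|\nabla f_n\|_{L^2}^2+\int_{\R^2}V|f_n|^2\,dx\right)=\omega_{V_0},
\]
so $\phi$ realizes the infimum. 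The only step with any subtlety is this last splitting of the liminf, which is legitimate precisely because the Gaussian term converges; beyond that there is no real obstacle, the argument being the standard direct method made possible by the compactness of $\Sigma\hookrightarrow L^2(\R^2)$.
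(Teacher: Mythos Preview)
Your proof is correct and follows essentially the same approach as the paper's: the lower bound by monotonicity in $V_0$, the upper bound by testing with the Gaussian $\varphi_0$, and attainment via the direct method using the compact embedding $\Sigma\hookrightarrow L^2(\R^2)$ together with weak lower semicontinuity of the harmonic-oscillator part. The only cosmetic difference is that the paper cites a general weak-continuity result for the Gaussian potential term, whereas you argue it directly from strong $L^2$ convergence and boundedness of $e^{-\gamma_0|x|^2}$; your version is in fact slightly more self-contained.
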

\begin{proof}
 Since $V_0 \ge 0$, it is obvious that $\omega_{V_0} \ge \omega_0$. Also,
\begin{equation*}
  \<H\varphi_0,\varphi_0\> = \gamma + V_0
  \frac{\gamma}{\pi}\int_{\R^2} e^{-(\gamma+\gamma_0)|x|^2}
  dx= \gamma +\frac{\gamma}{\gamma+\gamma_0} V_0 ,
\end{equation*}
hence $\omega_{V_0} \le \gamma+\frac{\gamma}{\gamma+\gamma_0} V_0$. It remains to show that $\omega_{V_0}$ is attained. Let $(f_n)_{n\ge 1}$ be a minimizing sequence for $\omega_{V_0}$. It follows that $(f_n)_{n\ge 1}$ is a bounded sequence in $\Sigma$. Thus, there exists $\varphi \in \Sigma$ such that (up to a subsequence) $f_n \rightarrow \varphi$ weakly in $\Sigma$ and strongly in $L^r(\R^2)$ for all $2\le r<\infty$. We infer that
	\[
	\|\varphi\|_{L^2}^2 =\lim_{n\rightarrow \infty} \|f_n\|_{L^2}^2 =1.
	\]
	By the lower continuity of the weak convergence, we have
	\[
	\frac{1}{2} \|\nabla \varphi\|^2_{L^2} + \frac{\gamma^2}{2} \|x \varphi\|^2_{L^2} \le \liminf_{n\rightarrow \infty} \frac{1}{2} \|\nabla f_n\|^2_{L^2} + \frac{\gamma^2}{2} \|x f_n\|^2_{L^2}.
	\]
	We also have from the weak continuity of the potential energy (see e.g., \cite[Theorem 11.4]{LiLo01}) that
	\begin{align} \label{eq:weak-cont}
	\int_{\R^2} e^{-\gamma_0|x|^2} |\varphi(x)|^2 dx = \lim_{n\rightarrow \infty} \int_{\R^2} e^{-\gamma_0 |x|^2} |f_n(x)|^2 dx.
	\end{align}
	This implies
	\[
	\omega_{V_0} \le \frac{1}{2} \|\nabla \varphi\|^2_{L^2} + \int_{\R^2} V|\varphi|^2 dx \le \liminf_{n\rightarrow \infty} \frac{1}{2} \|\nabla f_n\|^2_{L^2} + \int_{\R^2} V|f_n|^2 dx= \omega_{V_0},
	\]
	hence $\varphi$ is a minimizer for $\omega_{V_0}$. The proof is complete.
\end{proof}

Consider now the Pohozaev identities related to \eqref{eq:ground-rad}.
\begin{lemma} \label{lem:poho-iden}
	Let $\phi \in \Sigma$ be a solution to \eqref{eq:ground-rad}. Then the following identities hold
	\begin{align}
	\frac{1}{2} \|\nabla \phi\|^2_{L^2} + \omega \|\phi\|^2_{L^2} &+ \int_{\R^2} V |\phi|^2 dx + \int_{\R^2} |\phi|^4 \ln (|\phi|^2)dx =0, \label{poho-1} \\
	\omega \|\phi\|^2_{L^2} + \int_{\R^2} V|\phi|^2 dx &+\frac{1}{2}\int_{\R^2} x \cdot \nabla V |\phi|^2dx + \frac{1}{2}\int_{\R^2} |\phi|^4 \ln \left(\frac{|\phi|^2}{\sqrt{e}}\right) dx =0, \label{poho-2} \\
	\frac{1}{2}\|\nabla \phi\|^2_{L^2} +\frac{1}{2}\|\phi\|^4_{L^4} &=\omega \|\phi\|^2_{L^2} + \int_{\R^2} V|\phi|^2 dx + \int_{\R^2} x \cdot \nabla V |\phi|^2 dx. \label{poho-3}
	\end{align}
	In particular, since $V_0\ge 0$, if $\omega \ge
       \frac{1}{2\sqrt{e}}+\tfrac{V_0}{e^2}$ or
        $\omega+\omega_{V_0}> \tfrac{1}{e}$, then $\phi \equiv  0$.
\end{lemma}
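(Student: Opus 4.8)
The plan is to obtain the three identities by testing \eqref{eq:ground-rad} against $\bar\phi$ and against the dilation field $x\cdot\nabla\bar\phi$, and then to read the nonexistence statement off \eqref{poho-1} and \eqref{poho-2} by two elementary scalar estimates. Identity \eqref{poho-1} is immediate: multiply \eqref{eq:ground-rad} by $\bar\phi$, integrate over $\R^2$ and take real parts, noting that $\phi|\phi|^2\ln(|\phi|^2)\,\bar\phi=|\phi|^4\ln(|\phi|^2)$ is already real. For \eqref{poho-2} I would multiply \eqref{eq:ground-rad} by $x\cdot\nabla\bar\phi$, integrate and take real parts; three computations do the job. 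Since the dimension is two, the kinetic contribution $\RE\int(-\tfrac12\Delta\phi)\,(x\cdot\nabla\bar\phi)\,dx$ vanishes identically. Writing $G(t)=\tfrac{t^2}{4}\ln(t/\sqrt e)$, so that $2G'(t)=t\ln t$, one has the pointwise identity $\RE\big(\phi|\phi|^2\ln(|\phi|^2)\,\overline{\partial_j\phi}\big)=\partial_j\big[G(|\phi|^2)\big]$, whence $\RE\int\phi|\phi|^2\ln(|\phi|^2)\,(x\cdot\nabla\bar\phi)\,dx=-2\int G(|\phi|^2)\,dx=-\tfrac12\int|\phi|^4\ln(|\phi|^2/\sqrt e)\,dx$. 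Finally the linear part gives $\RE\int(\omega+V)\phi\,(x\cdot\nabla\bar\phi)\,dx=-\tfrac12\int\Div\big((\omega+V)x\big)|\phi|^2\,dx=-\omega\|\phi\|_{L^2}^2-\int V|\phi|^2\,dx-\tfrac12\int x\cdot\nabla V\,|\phi|^2\,dx$, using $\Div\big((\omega+V)x\big)=2(\omega+V)+x\cdot\nabla V$ in two dimensions. Summing the three and changing sign produces \eqref{poho-2}. For \eqref{poho-3} no new input is needed: rewriting $\ln(|\phi|^2/\sqrt e)=\ln(|\phi|^2)-\tfrac12$ in \eqref{poho-2} and forming $2\times\eqref{poho-2}-\eqref{poho-1}$ cancels $\int|\phi|^4\ln(|\phi|^2)\,dx$ and leaves exactly \eqref{poho-3}.

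For the nonexistence, I would first use \eqref{poho-1}. By the definition \eqref{eq:omega0} of $\omega_{V_0}$ as the bottom of the spectrum of $-\tfrac12\Delta+V$, the left-hand side of \eqref{poho-1} satisfies $\tfrac12\|\nabla\phi\|_{L^2}^2+\int V|\phi|^2\,dx+\omega\|\phi\|_{L^2}^2\ge(\omega+\omega_{V_0})\|\phi\|_{L^2}^2$, so that
\[
(\omega+\omega_{V_0})\,\|\phi\|_{L^2}^2\ \le\ -\int_{\R^2}|\phi|^4\ln(|\phi|^2)\,dx.
\]
On $\{|\phi|\ge1\}$ the integrand is $\le0$, while on $\{|\phi|<1\}$, with $s=|\phi|^2\in(0,1)$, the inequality $s\ln(1/s)\le e^{-1}$ gives $-|\phi|^4\ln(|\phi|^2)\le e^{-1}|\phi|^2$; hence the right-hand side is at most $e^{-1}\|\phi\|_{L^2}^2$. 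Thus $\big(\omega+\omega_{V_0}-e^{-1}\big)\|\phi\|_{L^2}^2\le0$, which forces $\phi\equiv0$ whenever $\omega+\omega_{V_0}>e^{-1}$.

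For the other criterion I would use \eqref{poho-2}. With $V(x)=\tfrac{\gamma^2}{2}|x|^2+V_0e^{-\gamma_0|x|^2}$ a direct computation gives
\[
V(x)+\tfrac12\,x\cdot\nabla V(x)=\gamma^2|x|^2+V_0\big(1-\gamma_0|x|^2\big)e^{-\gamma_0|x|^2}\ \ge\ -\frac{V_0}{e^2},
\]
since $\min_{u\ge0}(1-u)e^{-u}=-e^{-2}$ (attained at $u=2$) and $\gamma^2|x|^2\ge0$. Moreover, with $s=|\phi|^2\ge0$, the bound $s\big(\tfrac12-\ln s\big)\le e^{-1/2}$ (maximum at $s=e^{-1/2}$) yields $|\phi|^4\ln(|\phi|^2/\sqrt e)\ge-e^{-1/2}|\phi|^2$, hence $\tfrac12\int|\phi|^4\ln(|\phi|^2/\sqrt e)\,dx\ge-\tfrac1{2\sqrt e}\|\phi\|_{L^2}^2$. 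Inserting both bounds into \eqref{poho-2},
\[
0=\omega\|\phi\|_{L^2}^2+\int_{\R^2}\Big(V+\tfrac12\,x\cdot\nabla V\Big)|\phi|^2\,dx+\tfrac12\int_{\R^2}|\phi|^4\ln(|\phi|^2/\sqrt e)\,dx\ \ge\ \Big(\omega-\frac{V_0}{e^2}-\frac{1}{2\sqrt e}\Big)\|\phi\|_{L^2}^2+\gamma^2\int_{\R^2}|x|^2|\phi|^2\,dx,
\]
which is impossible unless $\phi\equiv0$ as soon as $\omega\ge\tfrac{1}{2\sqrt e}+\tfrac{V_0}{e^2}$ (in the borderline equality case the strictly positive last term already forces $\phi\equiv0$).

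The scalar optimizations invoked here are elementary one-variable exercises. The step that genuinely needs care, and which I regard as the main obstacle, is the justification of the dilation identity \eqref{poho-2} for an arbitrary $\phi\in\Sigma$ solving \eqref{eq:ground-rad}: the multiplier $x\cdot\nabla\bar\phi$ combined with the quadratically growing potential $V$ makes the integrations by parts legitimate only once one knows that $\phi$ is regular and decays fast enough for all boundary terms to vanish (in particular that $|x|^2\phi\in L^2$). I would establish this first, by a standard bootstrap: $\phi\in\Sigma\hookrightarrow L^r(\R^2)$ for every $r<\infty$ in two dimensions, elliptic regularity applied to $-\tfrac12\Delta\phi=-(\omega+V)\phi-\phi|\phi|^2\ln(|\phi|^2)$ upgrades $\phi$ to $H^2_{\loc}$ and then to local $C^{1,\alpha}$, while the confining character of $V$ (which creates a spectral gap, the nonlinearity being $o(\phi)$ as $\phi\to0$) yields Gaussian-type decay of $\phi$ and $\nabla\phi$ through an Agmon estimate; alternatively one works with the truncated multiplier $\chi(\cdot/R)\,x\cdot\nabla\bar\phi$ and lets $R\to\infty$. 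Once this regularity and decay input is secured, the rest is precisely the bookkeeping described above.
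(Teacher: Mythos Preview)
Your proposal is correct and follows essentially the same route as the paper: the paper obtains \eqref{poho-1} by multiplying by $\bar\phi$, \eqref{poho-2} by multiplying by $x\cdot\nabla\bar\phi$ and taking real parts (recording the same three pieces you compute), and \eqref{poho-3} as the linear combination you indicate; for the nonexistence it uses \eqref{poho-2} with the identical bounds $\min_{u\ge0}(1-u)e^{-u}=-e^{-2}$ and $\sup_{0<z<1}z\ln(1/z)=e^{-1}$ (phrased via the split $\{|\phi|^2\gtrless\sqrt e\}$ rather than your global inequality $s(\tfrac12-\ln s)\le e^{-1/2}$, but these are equivalent), and then \eqref{poho-1} combined with the definition of $\omega_{V_0}$. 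Your discussion of the regularity/decay needed to justify the dilation identity is a point the paper does not address explicitly.
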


\begin{proof}
The identity \eqref{poho-1} follows by multiplying both sides of \eqref{eq:ground-rad} with $\bar{\phi}$ and taking the integration over $\R^2$. By multiplying both sides of \eqref{eq:ground-rad} with $x\cdot \nabla \bar{\phi}$, integrating over $\R^2$, taking the real part and using the fact that
\begin{align*}
\RE \(\int_{\R^2} \Delta \phi x \cdot \nabla \bar{\phi} dx\) &= 0, \\
\RE \( \int_{\R^2} V \phi x \cdot \nabla \bar{\phi} dx \) &= -\int_{\R^2} V|\phi|^2 dx  - \frac{1}{2} \int_{\R^2} x \cdot \nabla V |\phi|^2 dx, \\
\RE \(\int_{\R^2} \phi |\phi|^2 \ln(|\phi|^2) x \cdot \nabla \bar{\phi}dx\) &= -\frac{1}{2} \int_{\R^2} |\phi|^4 \ln \left(\frac{|\phi|^2}{\sqrt{e}}\right) dx,
\end{align*}
we get \eqref{poho-2}. Finally, \eqref{poho-3} follows directly from \eqref{poho-1} and \eqref{poho-2}.
\smallbreak

In view of the formula
\begin{equation*}
V(x)+\frac{1}{2}x\cdot \nabla V(x) = \gamma^2 |x|^2 + V_0\(1-\gamma_0 |x|^2\)e^{-\gamma_0 |x|^2},
\end{equation*}
and of the straightforward estimate
\begin{equation*}
\sup_{|x|^2>1/\gamma_0} \(\gamma_0 |x|^2-1\)e^{-\gamma_0  |x|^2}=\sup_{r>1} (r-1)e^{-r}=\frac{1}{e^2},
\end{equation*}
\eqref{poho-2}  yields
\begin{align*}
\omega \|\phi\|^2_{L^2} + \gamma^2\int_{\R^2} |x|^2|\phi|^2 dx &+ \frac{1}{2}\int_{|\phi|^2 >\sqrt{e}} |\phi|^4 \ln
\left(\frac{|\phi|^2}{\sqrt{e}}\right) \\
&\le  \frac{1}{2}\int_{|\phi|^2<\sqrt{e}}  |\phi|^4 \ln \left(\frac{\sqrt{e}}{|\phi|^2}\right)dx +\frac{V_0}{e^2}\|\phi\|_{L^2}^2.
\end{align*}
We also compute
\begin{align} \label{eq:fact}
\sup_{0<z<1} z \ln\left(\frac{1}{z}\right) =\frac{1}{e},
\end{align}
hence
\begin{align*}
\omega \|\phi\|^2_{L^2} + \gamma^2\int_{\R^2} |x|^2|\phi|^2 dx + \frac{1}{2}\int_{|\phi|^2 >\sqrt{e}} |\phi|^4 \ln\left(\frac{|\phi|^2}{\sqrt{e}}\right) \le\(\frac{1}{2\sqrt{e}} +\frac{V_0}{e^2}\)\|\phi\|^2_{L^2}.
\end{align*}
It follows that
\[
\gamma^2\int_{\R^2} |x|^2|\phi|^2 dx +  \frac{1}{2}\int_{|\phi|^2 >\sqrt{e}} |\phi|^4 \ln
\left(\frac{|\phi|^2}{\sqrt{e}}\right) \le \left(\frac{1}{2\sqrt{e}}+\frac{V_0}{e^2}-\omega\right)\|\phi\|^2_{L^2}.
\]
Since the left hand side is the sum of non-negative terms, if $\omega \ge \frac{1}{2\sqrt{e}}+\tfrac{V_0}{e^2}$, then we must have $\phi\equiv 0$.

Similarly, \eqref{poho-1} and \eqref{eq:omega0} yield
\begin{equation*}
\(\omega+\omega_{V_0}\)\|\phi\|_{L^2}^2 + \int_{\R^2}|\phi|^4\ln(|\phi|^2)dx\le 0,
\end{equation*}
hence
\begin{equation*}
\(\omega+\omega_{V_0}\)\|\phi\|_{L^2}^2 \le \int_{|\phi|^2<1}|\phi|^4\ln\(\frac{1}{|\phi|^2}\)dx\le \frac{1}{e}\|\phi\|_{L^2}^2,
\end{equation*}
where we have used \eqref{eq:fact} again. This shows that if $\omega + \omega_{V_0} > \frac{1}{e}$, then we must have $\phi \equiv 0$. The proof is complete.
\end{proof}

\begin{remark}
We infer from Lemma \ref{lem:poho-iden} that non-trivial solutions exist only if
\begin{align} \label{cond-omega-1}
\omega < \frac{1}{2\sqrt{e}}+\frac{V_0}{e^2} \quad \text{and} \quad \omega + \omega_{V_0} \le  \frac{1}{e}.
\end{align}
In particular, when $V_0=0$, the above conditions become
\begin{equation*}
\omega<\frac{1}{2\sqrt e}\quad \text{and} \quad \omega+\gamma\le \frac{1}{e}.
\end{equation*}
According to the value of $\gamma$, either the first condition or the second one is the more stringent.
\end{remark}

We now explain in more details why finding directly a solution to
\eqref{eq:ground-rad} (that is, not minimizing the energy under a mass
constraint, in which case $\omega$ corresponds to the - unknown -
Lagrange multiplier) seems intricate. In order to
  simplify the computations, we go back to the initial case \eqref{eq:V-intro0},
  that is, we assume $\gamma_0=\gamma$.
To find ground states related to \eqref{eq:ground-rad}, a standard way is to consider the following minimization problem
\begin{align} \label{eq:d-omega-rad}
m_\omega := \inf \left\{ S_\omega (f) \ : \ f \in \Sigma
\backslash \{0\},\  K_\omega(f)=0\right\},
\end{align}
and then show that minimizers for $m_\omega$ are indeed solutions to \eqref{eq:ground-rad}, where
\begin{align*}
S_\omega(f) &:= \frac{1}{2} \|\nabla f\|^2_{L^2} + \omega \|f\|^2_{L^2} + \int_{\R^2} V|f|^2 dx + \frac{1}{2} \int_{\R^2} |f|^4 \ln \left(\frac{|f|^2}{\sqrt{e}}\right) dx, \\
K_\omega(f) &:=  \|\nabla f\|^2_{L^2} + 2 \omega \|f\|^2_{L^2} + 2\int_{\R^2} V|f|^2 dx + 2\int_{\R^2} |f|^4 \ln(|f|^2) dx.
\end{align*}
Note that
\begin{align} \label{S-K-omega}
S_\omega(f) &= \frac{1}{2} K_\omega(f) -\frac{1}{2} \int_{\R^2} |f|^4 \ln\left(|f|^2\right) dx -\frac{1}{4} \|f\|^4_{L^4} \\
&=\frac{1}{4} K_\omega(f) +\frac{1}{4} \|\nabla f\|^2_{L^2} +\frac{\omega}{2} \|f\|^2_{L^2} +\frac{1}{2} \int_{\R^2} V|f|^2 dx -\frac{1}{4} \|f\|^4_{L^4}.
\end{align}

We have the following sufficient condition that ensures the minimizing problem \eqref{eq:d-omega-rad} is well-defined.
\begin{lemma} \label{lem-K-omega-nonempty}
	Let $\gamma>0$ and $V_0 \ge 0$. If
	\begin{align} \label{cond-omega-2}
	\omega < \frac{1}{2\sqrt{e}}- \gamma - \frac{V_0}{2},
	\end{align}
	then the set
	\[
	\left\{ f \in \Sigma \backslash \{0\} \ : \ K_\omega(f) =0 \right\}
	\]
	is not empty.
\end{lemma}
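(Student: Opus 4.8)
The plan is to exhibit an explicit element of the constraint set by evaluating $K_\omega$ on the Gaussian attached to the bottom of the spectrum of $H$. Recall that in this part of the appendix $\gamma_0=\gamma$, so $V(x)=\frac{\gamma^2}{2}|x|^2+V_0e^{-\gamma|x|^2}$. For $\lambda>0$ I would take the trial function $f_\lambda(x)=\lambda e^{-\gamma|x|^2/2}\in\Sigma\setminus\{0\}$; the point of the width $\gamma/2$ is that it is resonant with the harmonic part, giving $\|\nabla f_\lambda\|_{L^2}^2=\gamma\|f_\lambda\|_{L^2}^2$ and keeping all integrals clean. First I would record the elementary Gaussian identities $\int_{\R^2}e^{-a|x|^2}\,dx=\pi/a$ and $\int_{\R^2}|x|^2e^{-a|x|^2}\,dx=\pi/a^2$, and compute from them $\|f_\lambda\|_{L^2}^2$, $\|\nabla f_\lambda\|_{L^2}^2$, $\int_{\R^2}V|f_\lambda|^2$, and $\int_{\R^2}|f_\lambda|^4\ln(|f_\lambda|^2)$ (the last one splitting $\ln(|f_\lambda|^2)=\ln\lambda^2-\gamma|x|^2$). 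This step is pure bookkeeping.

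Substituting into the definition of $K_\omega$ and collecting terms, one gets
\[
K_\omega(f_\lambda)=\frac{\pi\lambda^2}{\gamma}\bigl(2\gamma+2\omega+V_0\bigr)+\frac{\pi\lambda^4}{2\gamma}\bigl(2\ln(\lambda^2)-1\bigr).
\]
Dividing by the positive factor $\pi\lambda^2/(2\gamma)$ shows that $K_\omega(f_\lambda)=0$ is equivalent to
\[
\lambda^2\bigl(2\ln(\lambda^2)-1\bigr)=-\bigl(4\gamma+4\omega+2V_0\bigr),
\]
so it suffices to solve $g(s)=-(4\gamma+4\omega+2V_0)$ in $s=\lambda^2>0$, where $g(s):=s(2\ln s-1)$.

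The remaining step is the elementary study of $g$ on $(0,\infty)$: $g(s)\to0$ as $s\to0^+$, $g(s)\to+\infty$ as $s\to+\infty$, and $g'(s)=2\ln s+1$ vanishes only at $s=e^{-1/2}$, where $g$ attains its global minimum $g(e^{-1/2})=-2/\sqrt e$. Hence the range of $g$ is exactly $[-2/\sqrt e,+\infty)$, and the displayed equation is solvable precisely when $-(4\gamma+4\omega+2V_0)\ge -2/\sqrt e$, i.e. when $\omega\le \frac{1}{2\sqrt e}-\gamma-\frac{V_0}{2}$, which holds under \eqref{cond-omega-2}. Choosing such an $s$ and setting $\lambda=\sqrt s$ yields $f_\lambda\in\Sigma\setminus\{0\}$ with $K_\omega(f_\lambda)=0$, which proves the claim.

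I do not expect a genuine obstacle here: the only delicate point is making sure that $-(4\gamma+4\omega+2V_0)$ lies inside the range of $g$ and not strictly below its minimum $-2/\sqrt e$, which is exactly what the threshold $\frac{1}{2\sqrt e}-\gamma-\frac{V_0}{2}$ encodes. (Keeping a general width $b>0$ in the Gaussian replaces $4\gamma+4\omega+2V_0$ by $4b+4\omega+\gamma^2/b+8V_0b/(2b+\gamma)$, and optimizing over $b$ would slightly improve the constant when $V_0>0$; since $b=\gamma/2$ already yields the stated condition, I would not pursue this.)
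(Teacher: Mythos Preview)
Your argument is correct and follows essentially the same approach as the paper: both test $K_\omega$ on a scaled Gaussian and optimize. The paper keeps the width $b$ free, shows that $K_\omega(\lambda f_b)<0$ at $(\lambda^2,b)=(1/\sqrt e,\gamma/2)$, and then invokes continuity in $\lambda$ (since $K_\omega(\lambda f_b)\to+\infty$ as $\lambda\to\infty$) to locate a zero; you fix $b=\gamma/2$ from the start and instead solve $K_\omega(f_\lambda)=0$ directly by characterizing the range of $s\mapsto s(2\ln s-1)$. The two routes land on the identical threshold, and your closing remark about general $b$ is exactly the generalization the paper carries out before specializing.
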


\begin{proof}
	Denote $f_b(x)= e^{-b|x|^2}$ with $b>0$. A direct computation shows
	\begin{align*}
	\|f_b\|^2_{L^2} = \frac{\pi}{2b}, \quad \|xf_b\|^2_{L^2} = \frac{\pi}{4b^2}, \quad & \|\nabla f_b\|^2_{L^2} = \pi, \quad \|f_b\|^4_{L^4} = \frac{\pi}{4b}, \\
	\int_{\R^2} V(x)|f_b(x)|^2 dx = \frac{\gamma^2\pi}{8b^2} + \frac{\pi V_0}{\gamma+2b}, \quad & \int_{\R^2} |f_b|^4 \ln (|f_b|^2)dx = -\frac{\pi}{8b}.
	\end{align*}
	For $\lambda>0$ to be chosen later, we have
	\begin{align*}
	K_\omega(\lambda f_b) &= \lambda^2 \|\nabla f_b\|^2_{L^2} + 2\omega \lambda^2 \|f_b\|^2_{L^2} + 2 \lambda^2 \int_{\R^2} V|f_b|^2 dx \\
	&\mathrel{\phantom{= \lambda^2 \|\nabla f_b\|^2_{L^2}}}+ 2\lambda^4 \ln(\lambda^2) \|f_b\|^4_{L^4} + \lambda^4 \int_{\R^2} |f_b|^4 \ln(|f_b|^2) dx \\
	&= \lambda^2 \pi \left( 1+\frac{\omega}{b}\right) + 2\lambda^2 \pi \left(\frac{\gamma^2}{8b^2} +\frac{V_0}{\gamma+2b}\right) + \frac{\lambda^4 \pi}{2b} \ln \left(\frac{\lambda^2}{\sqrt{e}}\right).
	\end{align*}
	It follows that
	\[
	\frac{K_\omega(\lambda f_b)}{\lambda^2 \pi} = 1+\frac{\omega}{b} + \frac{\gamma^2}{4b^2} + \frac{2V_0}{\gamma+2b} + \frac{\lambda^2}{2b} \ln \left(\frac{\lambda^2}{\sqrt{e}}\right).
	\]
	Using \eqref{eq:fact}, we see that the last term takes its maximal negative value at $\lambda^2=\frac{1}{\sqrt{e}}$. It follows that
	\begin{align*}
	\left. \frac{K_\omega(\lambda f_b)}{\lambda^2 \pi}\right|_{\lambda^2=\frac{1}{\sqrt{e}}} = \pi \left(1+\frac{\omega}{b} + \frac{\gamma^2}{4b^2} + \frac{2V_0}{\gamma+2b} - \frac{1}{2b\sqrt{e}}\right).
	\end{align*}
	Optimizing the sum of the first and third terms yields $b=\frac{\gamma}{2}$, hence
	\[
	\left.\frac{K_\omega(\lambda f_b)}{\lambda^2 \pi}\right|_{(\lambda^2, b)=\left(\frac{1}{\sqrt{e}}, \frac{\gamma}{2}\right)} = \pi \left( 2+\frac{2\omega}{\gamma} + \frac{V_0}{\gamma} - \frac{1}{\gamma\sqrt{e}}\right).
	\]
	This shows that if $\omega <\frac{1}{2\sqrt{e}} - \gamma - \frac{V_0}{2}$, then
	\[
	\left.K_\omega(\lambda f_b)\right|_{(\lambda^2, b)=\left(\frac{1}{\sqrt{e}}, \frac{\gamma}{2}\right)} <0.
	\]
	By the continuity argument, there exists $f \in \Sigma \backslash \{0\}$ such that $K_\omega(f) =0$.
\end{proof}
\begin{remark}
	The condition \eqref{cond-omega-2} is stronger than \eqref{cond-omega-1} since $V_0\ge 0$ and $\omega_{V_0} \le \gamma +\frac{V_0}{2}$.
\end{remark}
By the above remark, from now on, we consider the following condition on $\omega$
\begin{align} \label{cond-omega}
-\omega_{V_0} < \omega <\frac{1}{2\sqrt{e}}-\gamma-\frac{V_0}{2}.
\end{align}
This condition is equivalent to
\[
0<\omega+\omega_{V_0} < \frac{1}{2\sqrt{e}} +\omega_{V_0} -\gamma -\frac{V_0}{2}.
\]
To make the above inequality is not empty, we need
\[
\gamma+\frac{V_0}{2} -\omega_{V_0}<\frac{1}{2\sqrt{e}}.
\]
Note that the left hand side belongs to $\left(0,\frac{V_0}{2}\right)$ as $\gamma \le \omega_{V_0} \le \gamma+\frac{V_0}{2}$. So, if $V_0<\frac{1}{\sqrt{e}} \simeq 0.6$, then the above condition is satisfied. Note that only the values $V_0=0$ and $V_0=0.2$ are considered in \cite{TSKR19}.

To show the existence of minimizers for $m_\omega$, we need an estimate showing the boundedness in $\Sigma$ of minimizing sequences for $m_\omega$. However due to the interplay of various nonlinear terms, it is not clear to us how to prove this at the moment. Below we collect some estimates which may be helpful for future investigation.

\begin{remark}
	Let $\omega$ be as in \eqref{cond-omega}. Let $f\in \Sigma\backslash \{0\}$ be such that $K_\omega(f)=0$. Then there exist positive constants $C_1=C_1(\omega)$ and $C_2=C_2(\omega)$ such that
	\[
	\|f\|^2_{L^2} + \|f\|^4_{L^4} \le C_1(\omega) \|f\|^3_{L^3}, \quad \|\nabla f\|^2_{L^2} + \|f\|^4_{L^4} \le C_1(\omega) \|f\|^2_{L^2}, \quad \|f\|_{L^3}\ge C_2(\omega).
	\]	
	Indeed, since $K_\omega(f)=0$, we have
	\[
	\|\nabla f\|^2_{L^2} + 2\omega \|f\|^2_{L^2} + 2\int_{\R^2} V|f|^2 dx + 2\int_{\R^2} |f|^4 \ln \left(\frac{|f|^2}{\sqrt{e}}\right) dx +\|f\|^4_{L^4} =0
	\]
	hence
	\begin{align*}
	\|\nabla f\|^2_{L^2} + 2\omega \|f\|^2_{L^2} + 2\int_{\R^2} V|f|^2 dx + \|f\|^4_{L^4} &+ 2\int_{|f|^2>\sqrt{e}} |f|^4 \ln \left(\frac{|f|^2}{\sqrt{e}}\right) dx \\
	&= 2\int_{|f|^2<\sqrt{e}} |f|^4 \ln \left(\frac{\sqrt{e}}{|f|^2}\right) dx.
	\end{align*}
	By the definition of $\omega_{V_0}$, we see that
	\begin{align*}
	2(\omega+\omega_{V_0})\|f\|^2_{L^2} + \|f\|^4_{L^4} + 2\int_{|f|^2>\sqrt{e}} |f|^4 \ln \left(\frac{|f|^2}{\sqrt{e}}\right) dx \le C \|f\|^3_{L^3}
	\end{align*}
	which implies
	\[
	\|f\|^2_{L^2} + \|f\|^4_{L^4} \le C(\omega) \|f\|^3_{L^3}.
	\]
	Another estimate using \eqref{eq:fact} yields
	\begin{align}
	\|\nabla f\|^2_{L^2} + 2\int_{\R^2} V|f|^2 dx + \|f\|^4_{L^4} &+ 2\int_{|f|^2>\sqrt{e}}|f|^4 \ln \left(\frac{|f|^2}{\sqrt{e}}\right) dx \nonumber \\
	&\le 2\left(\frac{1}{\sqrt{e}}-\omega\right) \|f\|^2_{L^2} \label{eq:est-L4}
	\end{align}
	which implies
	\[
	\|\nabla f\|^2_{L^2} + \|f\|^4_{L^4} \le C(\omega) \|f\|^2_{L^2}.
	\]
	By the Gagliardo-Nirenberg inequality, we obtain
	\begin{align} \label{eq:est-L3}
	\|f\|^6_{L^3} \le C \|\nabla f\|^2_{L^2} \|f\|^4_{L^2} \le C(\omega) \|f\|^6_{L^2} \le C(\omega) \|f\|^9_{L^3},
	\end{align}
	hence $\|f\|^3_{L^3} \ge \frac{1}{C(\omega)}>0$.
\end{remark}

\begin{observation} \label{obs:1}
	We have the following useful estimates:
	\begin{align*}
	\|f\|^2_{L^2} &\le \|\nabla f\|_{L^2} \|xf\|_{L^2} \\
	&\le \frac{\sqrt{2}}{\gamma} \|\nabla f\|_{L^2} \left( \int_{\R^2} V|f|^2  dx\right)^{1/2} \nonumber\\
	&\le \frac{1}{\gamma}\(\frac{1}{2} \|\nabla f\|^2_{L^2} + \int_{\R^2} V|f|^2 dx \). \nonumber
	\end{align*}
	and
	\begin{align*}
	\|f\|^4_{L^4} \le \frac{1}{\|Q\|^2_{L^2}} \|\nabla f\|^2_{L^2} \|f\|^2_{L^2},
	\end{align*}
	where $Q$ is the unique positive radial solution to \eqref{eq:Q} satisfying \eqref{eq:estQ}.
\end{observation}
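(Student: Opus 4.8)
The plan is to read off both estimates from inequalities already recorded in the paper, using only the elementary pointwise lower bound $V(x)\ge \tfrac{\gamma^2}{2}|x|^2$, which holds because $V_0\ge 0$ and $V(x)=\tfrac{\gamma^2}{2}|x|^2+V_0e^{-\gamma_0|x|^2}$.

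\textbf{First chain.} I would begin with the uncertainty principle \eqref{eq:uncer}, which gives $\|f\|_{L^2}^2\le \|\nabla f\|_{L^2}\,\|xf\|_{L^2}$. From $V(x)\ge \tfrac{\gamma^2}{2}|x|^2$ we get $\tfrac{\gamma^2}{2}\|xf\|_{L^2}^2\le \int_{\R^2}V|f|^2\,dx$, that is, $\|xf\|_{L^2}\le \tfrac{\sqrt 2}{\gamma}\bigl(\int_{\R^2}V|f|^2\,dx\bigr)^{1/2}$; substituting into the previous bound yields the middle inequality of the chain. For the last inequality I would apply Young's inequality in the sharp form $\sqrt 2\,ab\le \tfrac12 a^2+b^2$ (which is just AM--GM: $\tfrac12 a^2+b^2\ge 2\sqrt{\tfrac12 a^2 b^2}=\sqrt2\,ab$) with $a=\|\nabla f\|_{L^2}$ and $b=\bigl(\int_{\R^2}V|f|^2\,dx\bigr)^{1/2}$, and then multiply through by $1/\gamma$.

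\textbf{Second estimate.} Here I would invoke the sharp Gagliardo--Nirenberg inequality \eqref{eq:GN} with $r=4$, whose optimal constant is $C_4^{\mathrm{GN}}=\|Q\|_{L^2}^{-2}$ by Lemma~\ref{lem:GNA} (see also \cite{We83}), so that $\|f\|_{L^4}^4=\||f|\|_{L^4}^4\le \|Q\|_{L^2}^{-2}\,\|f\|_{L^2}^2\,\|\nabla|f|\|_{L^2}^2$. Since $|\nabla|f|(x)|\le |\nabla f(x)|$ a.e.\ (the diamagnetic inequality \eqref{eq:mag-ine} specialized to $A\equiv 0$, valid for complex-valued $f$; cf.\ \cite[Theorem~7.21]{LiLo01}), we conclude $\|f\|_{L^4}^4\le \|Q\|_{L^2}^{-2}\,\|\nabla f\|_{L^2}^2\,\|f\|_{L^2}^2$, which is the asserted bound.

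There is essentially no obstacle: both inequalities follow directly from results stated earlier in the excerpt, and the only care required is the bookkeeping of constants in the Young step and ensuring that the Gagliardo--Nirenberg constant used is the sharp one $\|Q\|_{L^2}^{-2}$ from Lemma~\ref{lem:GNA} rather than a generic one.
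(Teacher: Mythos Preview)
Your proof is correct and is precisely the intended justification: the paper records this statement as an \emph{Observation} without an explicit proof, and your derivation uses exactly the ingredients already established in the text (the uncertainty principle \eqref{eq:uncer}, the pointwise lower bound $V\ge \tfrac{\gamma^2}{2}|x|^2$ from $V_0\ge 0$, Young's inequality, and the sharp Gagliardo--Nirenberg constant $C_4^{\mathrm{GN}}=\|Q\|_{L^2}^{-2}$ from Lemma~\ref{lem:GNA}). There is nothing to add.
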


In the following remark, we point out another difficulty in finding ground states related to \eqref{eq:ground-rad}. More precisely, suppose that there is a minimizer $\phi$ for $m_\omega$, we are not able to show that $\phi$ is a solution to \eqref{eq:ground-rad}.

\begin{remark}
	Assume that $m_\omega$ is attained by a function $\phi$. Then there exists a Lagrange multiplier $\lambda \in \R$ such that $S'_\omega(\phi) =\lambda K'_\omega(\phi)$ or
	\begin{align} \label{eq:lagrange}
	-\Delta \phi + 2\omega \phi + 2V \phi &+ 2 \phi |\phi|^2 \ln(|\phi|^2) \\
	&= \lambda \(2\Delta \phi + 4 \omega \phi + 4 V \phi + 8 \phi |\phi|^2 \ln(|\phi|^2) + 4 \phi |\phi|^2 \). \nonumber
	\end{align}
	We want to show that $\lambda=0$ so that $\phi$ is a solution to \eqref{eq:ground-rad}. To see this, we first multiply both sides of \eqref{eq:lagrange} with $\bar{\phi}$ and integrate over $\R^2$ to get
	\[
	K_\omega(\phi) = \lambda \left(2\|\nabla \phi\|^2_{L^2} + 4\omega \|\phi\|^2_{L^2} + 4 \int_{\R^2} V|\phi|^2 dx + 8\int_{\R^2} |\phi|^4\ln(|\phi|^2)dx + 4 \|\phi\|^4_{L^4} \right).
	\]
	Since $K_\omega(\phi)=0$, we infer that
	\[
	\lambda \left( \int_{\R^2} |\phi|^4 \ln(|\phi|^2) dx + \|\phi\|^4_{L^4}\right) =0.
	\]
	Suppose that $\lambda \ne 0$, then we must have
	\begin{align} \label{eq:phi-1}
	\int_{\R^2} |\phi|^4 \ln(|\phi|^2) dx + \|\phi\|^4_{L^4} =0.
	\end{align}
	In particular, as $K_\omega(\phi)=0$, we have
	\begin{align} \label{eq:phi-2}
	\|\nabla\phi\|^2_{L^2} + 2\omega \|\phi\|^2_{L^2} + 2\int_{\R^2} V|\phi|^2 dx - 2 \|\phi\|^4_{L^4}=0.
	\end{align}
	Moreover, if we multiply both sides of \eqref{eq:lagrange} with $x \cdot \nabla \bar{\phi}$, integrate over $\R^2$, and take the real part, we get
	\begin{align*}
	2\omega &\|\phi\|^2_{L^2} + 2 \int_{\R^2} V|\phi|^2 dx + \int_{\R^2} x\cdot \nabla V |\phi|^2 dx + \int_{\R^2} |\phi|^4 \ln(|\phi|^2) dx - \frac{1}{2} \|\phi\|^4_{L^4} \\
	&=\lambda \(4\omega \|\phi\|^2_{L^2} + 4 \int_{\R^2} V|\phi|^2 dx + 2\int_{\R^2} x \cdot \nabla V |\phi|^2 dx + 4 \int_{\R^2} |\phi|^4 \ln(|\phi|^2) dx \)
	\end{align*}
	which implies
	\[
	(2\lambda-1) A + (4\lambda-1)B +\frac{1}{2} C=0,
	\]
	where
	\begin{align*}
	A:&= 2\omega \|\phi\|^2_{L^2} + 2 \int_{\R^2} V |\phi|^2 dx + \int_{\R^2} x\cdot \nabla V |\phi|^2 dx, \\
	B:&= \int_{\R^2} |\phi|^4 \ln(|\phi|^2)dx, \quad C:= \|\phi\|^4_{L^4}.
	\end{align*}
	From this and \eqref{eq:phi-1}, we have
	\[
	(2\lambda-1)A +\left(\frac{3}{2}-4\lambda\right)C=0.
	\]
However, it is not clear that $\lambda=0$, and it is not possible to conclude.
\end{remark}

\section{Characterization of prescribed mass minimizers}\label{sec:charac}
In this section, we will characterize the orbit of prescribed mass
standing waves obtained in Section \ref{sec:orbi}, in the absence of
the rotational effect, following the strategy from \cite{HaSt04}. Throughout this section, we assume that $\Omega = 0$ and $K_{3}= 0$.

Denote
\begin{align*}
\Sigma_{\R}:= \left\{ f \in H^1(\R^2,\R) \ : \ \int_{\R^2} |x|^2 f^2 dx <\infty\right\}.
\end{align*}
For $f \in \Sigma_{\R}$, we define
\[
E^{\R}_0(f):= \frac{1}{2} |\nabla f|^2_{L^2} + \int_{\R^2} V f^2 dx + \frac{1}{2} \int_{\R^2} f^4 \ln \left(\frac{f^2}{\sqrt{e}}\right) dx.
\]
Let $z =(f,g) \simeq f + i g$. We observe that
\[
z \in \Sigma \Longleftrightarrow f \in \Sigma_{\R}, \quad g \in \Sigma_{\R}.
\]
For $z \in \Sigma$, we consider
\[
E_0(z):= \frac{1}{2} \|\nabla z\|^2_{L^2} + \int_{\R^2} V|z|^2 dx +\frac{1}{2} \int_{\R^2} |z|^4 \ln \left(\frac{|z|^2}{\sqrt{e}}\right) dx.
\]
Here $|\cdot|_{L^2}$ and $\|\cdot\|_{L^2}$ denote the norms of $L^2(\R^2,\R)$ and $L^2(\R^2,\C)$, respectively.

For $c>0$, we consider the following minimizing problems:
\begin{align*}
I^{\R}_{0,c} :&=\inf \left\{ E^{\R}_0(f) \ : \ f \in S^{\R}_c\right\},\\
I_{0,c} :&= \inf \left\{ E_0(z) \ : \ z \in S_c\right\},
\end{align*}
where
\begin{align*}
S^{\R}_c:&= \left\{f \in \Sigma_{\R} \ : \ |f|^2_{L^2}=c\right\},\\
S_c :&= \left\{ z \in \Sigma \ : \ \|z\|^2_{L^2} = c\right\}.
\end{align*}
We also denote
\begin{align*}
W_c :&= \left\{ f \in S^{\R}_c \ : \ E^{\R}_c(f) = I^{\R}_{0,c},~ f >0 \right\},  \\
Z_c :&= \left\{ z \in S_c \ : \ E_0(z) = I_{0,c} \right\}.
\end{align*}

\begin{theorem}  \label{theo-charac}
	We have the following properties:
\begin{enumerate}
	\item For any $c>0, I_{0,c}^{\R}=  I_{0,c}$.
	\item If $z\in Z_{c}$, then  $|z|\in W_{c}$.
	\item If $z\in Z_{c}$ with $z=f+ig$, then
\begin{enumerate}
		\item $f\equiv 0$ or $f(x)\ne 0$ for all $x \in \R^2$.
		\item $g\equiv 0$ or $g(x)\ne 0$ for all $x \in \R^2$.
\end{enumerate}
	\item $Z_{c}=\left\{e^{i\sigma} \varphi \ : \ \sigma \in \R, \varphi \in W_c \right\}$.
\end{enumerate}
\end{theorem}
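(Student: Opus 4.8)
The plan is to reduce everything to two elementary facts: for $z=f+ig\in\Sigma$ one has $\|z\|_{L^2}=\||z|\|_{L^2}$ and the pointwise diamagnetic inequality $|\nabla z|^2=|\nabla f|^2+|\nabla g|^2\ge|\nabla|z||^2$ (this is \eqref{eq:mag-ine} with $A=0$), while the potential and logarithmic parts of the energy see $z$ only through $|z|$. Concretely, setting $P(u):=\int_{\R^2}Vu^2\,dx+\tfrac12\int_{\R^2}u^4\ln(u^2/\sqrt e)\,dx$, we have $E_0(z)=\tfrac12\|\nabla z\|_{L^2}^2+P(|z|)$ and $E^{\R}_0(u)=\tfrac12\|\nabla u\|_{L^2}^2+P(u)$, hence
\begin{equation}\label{eq:basic}
E^{\R}_0(|z|)\le E_0(z)\quad(z\in\Sigma),\qquad\text{with equality iff }\|\nabla z\|_{L^2}=\|\nabla|z|\|_{L^2}.
\end{equation}
For item (1): if $z\in S_c$ then $|z|\in S^{\R}_c$, so \eqref{eq:basic} gives $I^{\R}_{0,c}\le I_{0,c}$; conversely a real $f\in S^{\R}_c$ is also an element of $S_c$ with $E_0(f)=E^{\R}_0(f)$, so $I_{0,c}\le I^{\R}_{0,c}$, and equality follows (no use of existence of minimizers is made, that being guaranteed anyway by the arguments of Section~\ref{sec:orbi}).

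For item (2), take $z\in Z_c$. Combining \eqref{eq:basic} with item (1), $I^{\R}_{0,c}\le E^{\R}_0(|z|)\le E_0(z)=I_{0,c}=I^{\R}_{0,c}$, so $|z|$ is a minimizer for $I^{\R}_{0,c}$ and moreover $\|\nabla z\|_{L^2}=\|\nabla|z|\|_{L^2}$. It then remains to prove $|z|>0$ on $\R^2$. The functional $E^{\R}_0$ is of class $C^1$ on $\Sigma_{\R}$ (the logarithmic term being handled via \eqref{eq:est-log} and Gagliardo--Nirenberg, as in \cite{CaSp-p}), so by the Lagrange multiplier rule there is $\omega\in\R$ with $-\tfrac12\Delta|z|+V|z|+|z|^3\ln(|z|^2)+\omega|z|=0$ in the weak sense. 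An elliptic bootstrap (starting from $|z|\in H^1\hookrightarrow L^p_{\loc}$ for all finite $p$ and using $\bigl|\,|z|^3\ln|z|^2\bigr|\lesssim_\eps|z|^{3-\eps}+|z|^{3+\eps}$) gives $|z|\in C^{1,\alpha}_{\loc}(\R^2)$, whence the zeroth-order coefficient $c(x):=V(x)+|z(x)|^2\ln|z(x)|^2+\omega$ belongs to $L^\infty_{\loc}(\R^2)$ --- here one uses $V\ge0$ and the lower bound $t^2\ln t^2\ge-1/e$. Applying Harnack's inequality (equivalently the strong maximum principle) to the nonnegative solution $|z|$ of $-\Delta|z|+2c(x)|z|=0$ yields either $|z|\equiv0$, impossible since $\|z\|_{L^2}^2=c>0$, or $|z|>0$ everywhere; thus $|z|\in W_c$.

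For items (4) and (3), keep $z\in Z_c$ and set $u:=|z|$, which by item (2) is continuous and strictly positive. Since $\R^2$ is simply connected and $1/u$ is locally bounded, we may write $z=u\,e^{i\theta}$ with $\theta\in H^1_{\loc}(\R^2;\R)$ (indeed $\nabla\theta=\IM(\bar z\nabla z)/u^2$). Then $|\nabla z|^2=|\nabla u|^2+u^2|\nabla\theta|^2$ a.e., so $\int_{\R^2}u^2|\nabla\theta|^2\,dx=\|\nabla z\|_{L^2}^2-\|\nabla u\|_{L^2}^2=0$ by item (2); as $u>0$ this forces $\nabla\theta\equiv0$, i.e. $\theta\equiv\sigma$ constant, so $z=e^{i\sigma}u$ with $u\in W_c$. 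Conversely, for $\varphi\in W_c$ and $\sigma\in\R$ we have $e^{i\sigma}\varphi\in S_c$ and $E_0(e^{i\sigma}\varphi)=E^{\R}_0(\varphi)=I^{\R}_{0,c}=I_{0,c}$ by item (1), so $e^{i\sigma}\varphi\in Z_c$; together with the previous inclusion this gives item (4). Finally, $z=e^{i\sigma}|z|$ gives $f=(\cos\sigma)|z|$ and $g=(\sin\sigma)|z|$, so since $|z|>0$ everywhere each of $f$, $g$ is either identically zero (when the corresponding cosine/sine vanishes) or nowhere zero --- this is item (3).

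The main obstacle is item (2): one must make rigorous that the logarithmic nonlinearity --- not locally Lipschitz at the origin --- is nonetheless tame enough for $E^{\R}_0$ to be $C^1$, for the Euler--Lagrange equation to hold, and for the elliptic bootstrap to run. The two structural facts doing the work are that $F'(t)=2t^3\ln t^2$ is continuous with $F'(0)=0$ and satisfies $|F'(t)|\lesssim_\eps|t|^{3-\eps}+|t|^{3+\eps}$, and that $t^2\ln t^2$ is bounded below, so that the zeroth-order coefficient of the linearized equation is locally bounded rather than merely locally integrable and Harnack applies. Granting these, items (1), (3), (4) are essentially formal consequences of the diamagnetic inequality \eqref{eq:basic} and the polar decomposition $z=|z|e^{i\theta}$.
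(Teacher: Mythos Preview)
Your proof is correct and somewhat more streamlined than the paper's. The main difference is in the logical order of items (3) and (4): the paper first proves (3) directly via a connectedness argument --- showing the zero set $\{f=0\}$ is both open and closed using the identity $\int_{|z|>0}\frac{(f\partial_i g-g\partial_i f)^2}{f^2+g^2}\,dx=0$ and $C^1$ regularity of $f,g$ --- and then deduces (4) by the case analysis $g\equiv0$ versus $g\ne0$, concluding $f/g$ is constant; you instead go straight to (4) via the polar decomposition $z=|z|e^{i\theta}$ (available since $|z|>0$ and $\R^2$ is simply connected), conclude $\theta$ is constant from $\int u^2|\nabla\theta|^2=0$, and then read off (3) as an immediate corollary. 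Your route is cleaner in that it avoids the open/closed argument entirely. A secondary difference is in (1): the paper's argument starts from a minimizer $z$ for $I_{0,c}$, whereas you prove the two inequalities $I^{\R}_{0,c}\le I_{0,c}$ and $I_{0,c}\le I^{\R}_{0,c}$ directly from \eqref{eq:basic} and the tautology $S^{\R}_c\subset S_c$, so no existence is needed. One small point worth making explicit in your write-up: the lifting $z/|z|=e^{i\theta}$ with $\theta\in H^1_{\loc}$ is justified either by noting that $z$ itself is $C^1$ (same bootstrap you ran for $|z|$, applied to the complex Euler--Lagrange equation satisfied by $z$) or by invoking the $H^1$ lifting theorem for $S^1$-valued maps on simply connected domains; either way this is standard, but it is the one place where a reader might pause.
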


\begin{proof}
	The proof is essentially given in \cite{HaSt04}. For the reader's convenience, we provide some details.
	
(1) Let $z=(f,g)\in S_{c}$ be such that $E_{0}(z)= I_{0,c}$ and set $\varphi:=|z|=\sqrt{f^2+g^2}$. It follows that $\varphi \in S_{c}^{\R}$ and we have (see \cite[Proposition 2.2.]{HaSt04}) for $i=1,2$,
\[
\partial_{i}\varphi =\left\{
\begin{array}{c l}
\displaystyle  \frac{f \partial_i g + g\partial_i f}{\sqrt{f^2+g^2}} & \mbox{if } f^2+g^2>0, \\
0 & \text{otherwise.}
\end{array}
\right.
\]
We see that
\begin{align}
E_{0}(z)-E_{0}^{\R}(\varphi)&=\frac{1}{2} \left(\|\nabla z\|_{L^2}^2-|\nabla \varphi|_{L^2}^2\right) \nonumber\\
&=\frac{1}{2}\int_{\R^2} \left(|\nabla f|^2+|\nabla g|^2 -|\nabla \varphi|^2\right) dx \nonumber\\
&=\frac{1}{2} \int_{f^2+g^2>0}\sum_{i=1}^{2} \frac{\left(f\partial_i g-g\partial_i f\right)^2}{f^2+g^2} dx \nonumber\\
&\ge 0. \label{est-E0-z}
\end{align}
Thus
\[
I_{0,c} =E_{0}(z) \ge E_{0}^{\R}(\varphi)\ge I_{0,c}^{\R}\ge I_{0,c}
\]
which implies that
\begin{align} \label{I0c-varphi}
I_{0,c}^{\R} = I_{0,c}, \quad E^{\R}_0(\varphi) = I_{0,c}.
\end{align}

(2) If $z\in Z_c$, then $\varphi=|z|$ satisfies $\varphi \in S_{c}^{\mathbb{R}}$ and $E_{0}^{\mathbb{R}}(\varphi)=I_{0,c}$. Using regularity theory and maximum principle, we can deduce that  $\varphi\in C^{1}(\R^2)$ and $\varphi>0$. In particular, $|z| \in W_c$.

(3) We only prove (a) since the one for (b) is treated in a similar manner. Let $z=(f,g)\in Z_c$ and $\varphi=|z|$. By \eqref{est-E0-z} and \eqref{I0c-varphi}, we know that
\begin{align} \label{zero}
\int_{f^2+g^2>0} \frac{\left(f\partial_i g-g \partial_if\right)^2}{f^2+g^2} dx=0.
\end{align}
On the other hand, as $E_0(z)=I_{0,c}$, there exists $\mu\in \C$ such that for any $\xi \in \Sigma$,
\[
E_0'(z)(\xi)=\frac{\mu}{2}\int_{\R^{2}} z  \overline{\xi} + \xi  \overline{z}  dx.
\]
Putting $z=\xi$ and using regularity theory, we can deduce that $f$ and $g$ belong to $C^1(\R^2)$.

Now suppose that $f\equiv 0$. Denote
\[
\delta_f=\left\{x\in \R^2 \ : \ f(x)=0\right\}.
\]
The continuity of $f$ implies that $\delta_f$ is closed. Let us now prove that it is also an open set of $\R^2$. Pick a point $x_0\in \delta_f$. Since $\varphi(x_0)>0$, there exists a ball $B(x_0,\rho)$ centered at $x_0$ with radius $\rho>0$ such that $g(x)\ne 0$ for all $x\in B(x_0,\rho)$.
Observe that for each $x\in B(x_,\rho)$ and $i=1,2$,
\[
\frac{\left( f \partial_i g - g \partial_i f\right)^2}{f^2+g^2} = \left(\partial_i \left(\frac{f}{g}\right) \right)^2 \frac{g^4}{f^2+g^2}.
\]
From this and \eqref{zero}, we have
\[
\int_{B(x_0,\rho)}\left|\nabla \left(\frac{f}{g}\right)\right|^2 \frac{g^4}{f^2+g^2}dx=0.
\]
Hence $\nabla \left(\frac{f}{g}\right)=0$ on $B(x_0,\rho)$. This implies that $\frac{f}{g}=C$ on $B(x_0,\rho)$ for some constant $C>0$. As $f(x_0)=0$, we infer that $C=0$. Hence $f(x)=0$ for all $x \in B(x_0,\rho)$ or $B(x_0,\rho) \subset \delta_f$ which tells us that $\delta_f$ is an open set.

(4) Finally we prove that $Z_c=\left\{e^{i\sigma} \varphi \ : \ \sigma \in \R,~ \varphi \in W_c \right\}$. To this end, we consider two cases.

First case: If $g=0$, then $\varphi=|f|>0$ on $\R^2$. Thus $z=f=e^{i\sigma} \varphi$ with $\sigma=0$ if $f>0$, $\sigma=\pi$ if $f<0$.

Second case: If $g(x)\ne 0$ on $\R^2$, then for $i=1,2$, we have from \eqref{zero} that
\[
\int_{\R^2} \frac{\left(f\partial_i g-g\partial_i f\right)^2}{f^2+g^2} dx =
\int_{\R^2}  \partial_{i}\left(\frac{f}{g}\right)^2 \frac{f^4}{f^2+g^2} dx \equiv 0.
\]
It follows that $\nabla \left(\frac{f}{g}\right)=0$ on $\R^2$. This implies that $f=kg$ for $k \in \R$, hence $z=(k+i) g$ and
\[
\varphi=|z|=|k+i||g|.
\]
Let $\theta \in \R$ be such that $k+i = |k+i| e^{i\theta}$. We also write $g=|g|e^{i\vartheta}$ with $\vartheta= 0$ if $g>0$ and $\vartheta=\pi$ if $g<0$. It follows that
\[
z= (k+i)g = |k+i|e^{i\theta} |g| e^{i\vartheta} = \varphi e^{i\sigma}, \quad \sigma:= \theta + \vartheta \in \R.
\]
The proof is complete.
\end{proof}

\begin{remark}
  The statement of Theorem \ref{theo-charac} would remain
  true in the presence of the rotation and the line of attack would be
  identical provided that we had
	\begin{equation} \label{eq:est-A-z}
	\|\nabla_A z\|_{L^2} \ge |\nabla_A|z||_{L^2}
	\end{equation}
	and
	\begin{align*}
	\|\nabla_A z\|_{L^2}= |\nabla_A|z||_{L^2} \Longrightarrow \|\nabla z\|_{L^2}= |\nabla |z||_{L^2}.
	\end{align*}
However, we show that \eqref{eq:est-A-z} cannot hold in general.  By
definition, we have
\begin{align*}
  	\|\nabla_A z\|^2_{L^2} &= \sum_{j=1}^{2} \int_{\R^2}
                                 |(\partial_j - iA_j)z|^2 dx \\
  &= \sum_{j=1}^2 \int_{\R^2}\( |\partial_j z|^2 + iA_j \overline{z} \partial_j z - i A_j z \partial_j \overline{z} + A_j^2 |z|^2 \)dx.
\end{align*}
		Applying to $z=|z|$, we get
		\[
		|\nabla_A |z||^2_{L^2} = |\nabla |z| |^2_{L^2} + \int_{\R^2} |A|^2 |z|^2 dx.
		\]
		In our case $A=\Omega(-x_2,x_1)$, we have $|A|^2=\Omega^2 |x|^2$ and
		\[
		|\nabla_A|z||^2_{L^2} = |\nabla |z||^2_{L^2} + \Omega^2 \|xz\|^2_{L^2}.
		\]
		Let $\varphi(x) =e^{-|x|^2}$ and set $z_n(x):= e^{iA(y_n) \cdot x} \varphi(x+ y_n)$ with some $(y_n)_n \subset \R^2$ to be chosen later. Observe that
		\[
		|z_n(x)| = |\varphi(x+y_n)|, \quad \|\nabla_A z_n\|^2_{L^2} = |\nabla_A \varphi|^2_{L^2} = \pi \left(1+\frac{\Omega^2}{4}\right).
		\]
		On the other hand, by choosing $|y_n| \rightarrow \infty$ as $n\rightarrow \infty$, we have
		\begin{align*}
		|\nabla_A |z_n||^2_{L^2} &\ge \Omega^2 \int_{\R^2} |x|^2 |\varphi(x+y_n)|^2 dx = \Omega^2 \int_{\R^2} |x-y_n|^2 |\varphi(x)|^2 dx \\
	&	\ge \frac{\Omega^2}{4} |y_n|^2 \int_{|x| \le 1}
           |\varphi(x)|^2 dx \Tend n\infty \infty.
		\end{align*}
\end{remark}

\bibliographystyle{siam}

\bibliography{biblio}

\end{document}